\documentclass[reqno,12pt]{amsart}
\usepackage{amsmath,amssymb,epsfig,color,here,wasysym,stmaryrd,slashbox,tikz-cd,url}

\textwidth 15.6cm
\textheight 22.4cm
\hoffset-1.7cm \voffset-.5cm

\numberwithin{equation}{section}

\newtheorem{theorem}{Theorem}[section]

\newtheorem{lemma}[theorem]{Lemma}
\newtheorem{proposition}[theorem]{Proposition}
\newtheorem{definition}[theorem]{Definition}
\newtheorem{corollary}[theorem]{Corollary}

\theoremstyle{remark}
\newtheorem{example}[theorem]{Example}
\newtheorem{remark}[theorem]{Remark}

\newcommand{\vanish}[1]{}

\begin{document}

\title{Spanning hypertrees, vertex tours and meanders}

\author[Robert Cori and G\'abor Hetyei]{Robert Cori \and G\'abor Hetyei}

\address{Labri, Universit\'e Bordeaux 1, 33405 Talence Cedex, France.
\hfill\break
WWW: \tt http://www.labri.fr/perso/cori/.}

\address{Department of Mathematics and Statistics,
  UNC Charlotte, Charlotte NC 28223-0001.
WWW: \tt http://webpages.uncc.edu/ghetyei/.}

\date{\today}
\subjclass [2010]{Primary 05C30; Secondary 05C10, 05C15}

\keywords {set partitions, noncrossing partitions, genus of a hypermap}

\begin{abstract}
  This paper revisits the notion of a spanning hypertree of a hypermap
  introduced by one of its authors and shows that it allows to shed new 
  light on a very diverse set of recent results.

  The tour of a map along one of its spanning trees used by
  Bernardi may be generalized to hypermaps and we show that it is
  equivalent to a dual tour described by Cori~\cite{Cori-hrec} and
  Mach\`\i~\cite{Machi}. We give a bijection between the spanning
  hypertrees of the reciprocal of the plane graph with $2$ vertices and $n$
  parallel edges and the meanders of order $n$ and a bijection of the same kind
  between semimeanders of order $n$ and spanning hypertrees of the
  reciprocal of a plane graph with a single vertex and $n/2$ nested
  edges. We introduce hyperdeletions and hypercontractions in a hypermap
  which allow to count the spanning hypertrees
  of a hypermap recursively, and create a link with the computation of
  the Tutte polynomial of a graph. Having a particular interest in
  hypermaps which are reciprocals of maps, we generalize
  the reduction map introduced by Franz and Earnshaw to enumerate
  meanders to a reduction map that allows the
  enumeration of the spanning hypertrees of such hypermaps. 
\end{abstract}

\dedicatory{\`A la m\'emoire de Pierre Rosenstiehl}

\maketitle

\section*{Introduction}

This paper is about {\em hypermaps}, a notion that has interested several
researchers in combinatorics. This notion generalizes that of a
{\em combinatorial map} (sometimes also called a {\em ribbon graph}), which
represents the embedding of a graph into an orientable 
surface with a pair of a permutation and of a fixed point free
involution. The same way hypergraphs generalize graphs by introducing hyperedges
incident to more than two vertices, hypermaps generalize maps by
replacing the involution with a permutation that has cycles of arbitrary
length. Hence one may interpret a hypermap as an embedding of a
hypergraph into an orientable surface. The main goal of this paper is to
return to the notion of a {\em spanning hypertree} of a hypermap introduced by
Cori, Penaud~\cite{Cori-hrec,Cori-Penaud} and Mach\`\i~\cite{Machi} with the
purpose of showing that several recent 
results on various combinatorial objects may be enlightened by
interpreting them in terms of spanning hypertrees of certain families of
hypermaps. These results concern apparently very distant areas such as
the tour of a graph with the purpose of computing its Tutte
polynomial~\cite{Bernardi-embeddings} or the determination of the number
of meanders and semimeanders. The main results of this paper are the following:
\begin{itemize}
\item Theorem~\ref{thm:0trees} and its Corollary~\ref{cor:vertextour}
  which generalizes the ``motion function'' used by
  Bernardi~\cite{Bernardi-embeddings} to
  hypermaps and provides a simple treatment in this more general setting.
\item Theorems~\ref{thm:mut} and~\ref{thm:mut0} characterizing the
  process of deletions and contractions  in a hypermap which allow to
  obtain all spanning hypertrees of a hypermap. These use a result of
  Goulden and Young~\cite[Theorem
  2.2]{Goulden-Yong} on a minimal decomposition of a permutation into a
  product of transpositions.
\item Theorem~\ref{thm:stdecomp} which gives a formula counting the number of
    spanning hypertrees of a hypermap in terms of the spanning
    hypertrees of a set of hypermaps obtained by deletions of
    contractions from the original hypermap.
\item Theorems~\ref{thm:semimeanders} and \ref{thm:meanders} which
  provide bijections between the spanning hypertrees of the reciprocal
  hypermaps of a plane graph with $2$ vertices and $n$ edges and
  meanders of order $n$, respectively bijections of the same type
  between semimeanders and and plane maps with one vertex and $n/2$
  parallel loops.  
\item Theorem~\ref{thm:rtrees} and its Corollary~\ref{cor:rtrees}  which
  generalize a result of Franz on meanders to hypermaps that are
  reciprocals of maps. 
\end{itemize}
Our paper is divided into seven sections, discussing the above mentioned
questions. 

In the preliminary Section~\ref{sec:prelim} we remind the reader of the
definition of a hypermap as a pair of permutations, one representing the
vertices the other one the hyperedges, the faces 
may then be expressed by a composition of these permutations and one may
define the genus by counting cycles. We also define some simple
transforms of hypermaps, such 
as the reciprocal (obtained by exchanging the vertices and the
hyperedges) and the dual (obtained by exchanging the vertices and the
faces). The 
central notion of a spanning hypertree is then introduced. It
relies on an order on the permutations based on their cycle
decompositions. Section~\ref{sec:Machi} recalls a result obtained by
Mach\`\i~\cite{Machi}, generalizing the result of Cori~\cite{Cori-hrec} and
Cori and Penaud~\cite{Cori-Penaud} showing the connection between the spanning
hypertrees and various other parameters of the pairs of permutations.
In Section~\ref{sec:hyperdc} we introduce the hyperdeletion and
hypercontraction operations for hypermaps, each of these multiply the
constituting permutations by a single transposition. These operations
generalize the well-known deletion and contraction operations on graphs
and combinatorial maps. Finding a spanning hypertree amounts to a
sequence of operations based on writing permutations as products of
transpositions. Using a dual description of face tours and vertex tours, 
we show that every hypermap has a {\em two-disk diagram} where vertices
form a noncrossing partition inside a vertex tour, faces form another
noncrossing partition inside the face tour. Drawing a diagram of a
hypermap thus corresponds to drawing a bipole (a hypermap with two
vertices) on a surface of the same genus as that of a hypermap, and then
adding some detail in a noncrossing fashion inside the two vertices,
representing the face tour and the vertex tour, respectively.
The sequence of operations introduced in Section~\ref{sec:hyperdc} is
described in detail in Section~\ref{sec:dcp} which opens a pathway to
the construction of a 
Tutte polynomial. Based on the results in Section~\ref{sec:dcp}
there would be several ways to define a Tutte polynomial. We did not
commit to any specific choice, because any such definition would depend
on the ordering of the hyperedges, unfortunately. It should be noted that
there is a hypergraph Tutte polynomial defined by Bernardi, K\'alm\'an and
Postnikov~\cite[Formula (2)]{Bernardi-Kalman-Postnikov} which is
independent of the ordering of the hyperedges and relies on labeling bases in a
(poly)matroid (see also \cite{Kalman, Kalman-Postnikov, Kalman-Tothmeresz} for related constructions). It is unclear, however, whether this approach could be
extended to hypermaps, and it should be noted that in the
Bernardi-K\'alm\'an-Postnikov construction some elements in the
ground set need to be considered simultaneously externally and
internally active, which hints at more complex relation with
deletion-contraction processes. Section~\ref{sec:shtc} 
provides a recursive formula for the number of 
spanning hypertrees in terms of certain hypermaps obtained by
hyperdeletions and hypercontractions. Our formula uses the same
decomposition of the noncrossing partition lattice as the one used by  
Simion and Ullman~\cite[Theorem 2]{Simion-Ullman} as an aid to
recursively construct a symmetric chain decomposition of the noncrossing
partition lattice. Section~\ref{sec:semimeanders}  is dedicated to
semimeanders and meanders, whose enumeration is the interest of many
authors, among whom Rosenstiehl was a first~\cite{Rosenstiehl}. We show
that their enumeration may be reduced to counting the 
spanning hypertrees of particular hypermaps: reciprocals of monopolar,
respectively bipolar maps with noncrossing parallel edges. This study,
together with the observation that besides duality taking the reciprocal
of a hypermap is part of the hypermap analogue of Tutte's trinity, makes
one think that reciprocals of maps must have special properties. This leads to
Section~\ref{sec:rtrees} where we generalize the work of
Franz~\cite{Franz-po,Franz-representation}, and we develop 
a labeled tree representation of the spanning hypertrees of the
reciprocal of a map. In particular, we show that the set of these
spanning hypertrees is bijectively equivalent to all trees that can be
obtained from the map by a sequence of topological vertex splittings.
This observation allows us to generalize the idea of Franz
and Earnshaw~\cite{Franz-Earnshaw} of a constructive enumeration of
meanders to an idea of a constructive enumeration of all spanning
hypertrees of the reciprocal of a map. 

\section{Preliminaries}
\label{sec:prelim}

\subsection{Hypermaps and hypertrees}

Informally, a {\em hypermap} is a hypergraph, topologically embedded in
a surface. Formally, it is a pair of permutations $(\sigma,\alpha)$
acting on the same finite set of labels, generating a transitive permutation
group. Fig.~\ref{fig:hypermap} represents the planar
hypermap $(\sigma,\alpha)$ for $\sigma=(1,2,3)(4,5,6)(7,8,9,10)(11,12)$ and
$\alpha=(1,6)(2,11,9,5)(3,7)(4,10)(8,12)$. The cycles of $\sigma$ are the {\em
  vertices} of the hypermap, the cycles of $\alpha$ are its {\em
  hyperedges}. A hypermap is a {\em map} if the 
length of each cycle in $\alpha$ is at most $2$.

\begin{figure}[h]
%90%  
\begin{center}
\input{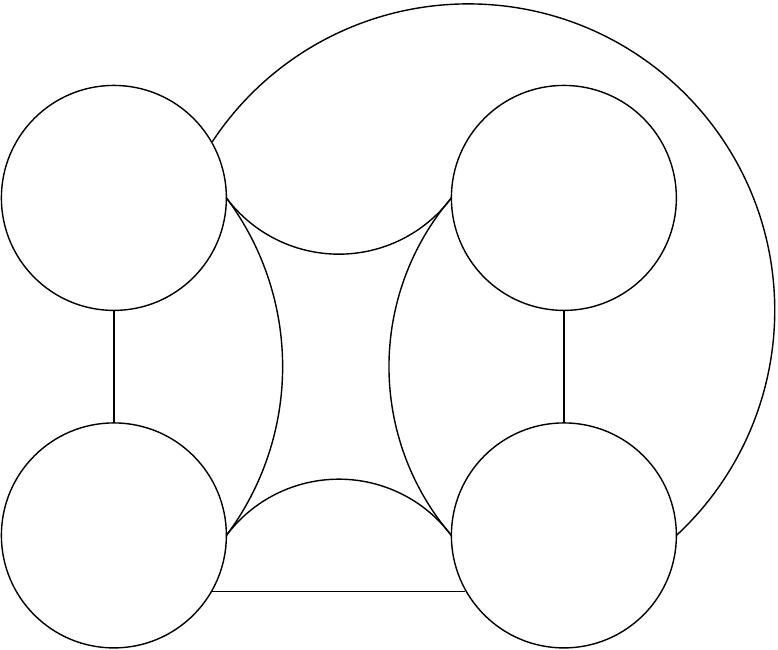_t}
\end{center}
\caption{The hypermap $(\sigma,\alpha)$}
\label{fig:hypermap} 
\end{figure}

For planar hypermaps it is convenient to choose some
drawing conventions. In Fig.~\ref{fig:hypermap} we follow
the following rules: the cycles of $\sigma$ list the exits in
counterclockwise order, and we place the labels on the left hand side of each
exit (seen from the vertex). Following these conventions, the cycles of
$\alpha^{-1}\sigma=(1,5)(2,7,12)(3,6,10)(4,9)(8,11)$ label the regions
in the plane, created by the vertices and hyperedges, and we call the
cycles of the 
permutation $\alpha^{-1}\sigma$ the {\em faces} of the hypermap. Note
that {\em we multiply permutations from the right to the left}, in other
words we compose them as functions. One of our main sources,
Mach\`\i~\cite{Machi}, multiplies permutations from the left to the
right, and when he defines the faces as the cycles of
$\alpha^{-1}\sigma$, these are the cycles of $\sigma\alpha^{-1}$ in our
notation. When following Mach\`\i 's convention, it is more convenient to place
the labels on the right of each exit, thus the cycles of
$\sigma\alpha^{-1}$ label the regions created by the vertices and
hyperedges. 

There is a well-known formula, due to Jacques~\cite{Jacques} determining
the smallest genus $g(\sigma,\alpha)$ of a surface on which a hypermap
$(\sigma,\alpha)$ may be drawn. This number is given by the equation 
\begin{equation}
\label{eq:genusdef}
n + 2 -2g(\sigma,\alpha) = z(\sigma) + z(\alpha) + z(\alpha^{-1}
\sigma),
\end{equation}
where $z(\pi)$ denotes the number of cycles of the permutation $\pi$ and
$n$ is the number of permuted elements.
The number $g(\sigma,\alpha)$ is always an integer and it is called {\em
  the genus of the hypermap $(\sigma,\alpha)$}. In our example, $n=12$,
$z(\sigma)=4$, $z(\alpha)=5$, $z(\alpha^{-1}\sigma)=5$ and
Equation~\eqref{eq:genusdef} gives $g(\sigma,\alpha)=0$, that is, we
have a planar hypermap.  

A key notion of this paper is the {\em spanning hypertree of a
  hypermap}, first introduced for planar hypermaps in~\cite{Cori-hrec}
and generalized to hypermaps of arbitrary genus in~\cite{Machi}. A
hypermap $(\sigma,\alpha)$ is {\em a unicellular hypermap} if it
has only one face. We call a unicellular hypermap a {\em hypertree} if its
genus is zero. Note that Mach\` i~\cite{Machi} uses the term hypertree
even for unicellular hypermaps
of a higher genus. At this point our terminology is in line
with the widely used term {\em unicellular map} which is a map with only
one face. A permutation
$\theta$ is a {\em refinement} of a permutation $\gamma$, if there exists a
pair of decompositions $\gamma=\gamma_1\cdots \gamma_t$  and
$\theta=\theta_1\cdots \theta_t$ such that the $\gamma_i$s are
pairwise disjoint cycles of $\gamma$, the $\theta_i$s are products of
disjoint cycles of $\theta$, for each $i$ the permutations
$\gamma_i$ and $\theta_i$ act on the same set of elements, and they satisfy
$g(\theta_i,\gamma_i)=0$. It has been shown in~\cite[Theorem~1]{Cori}
that for a circular permutation $\sigma$ (that is, a permutation with a
single cycle) and an arbitrary permutation $\alpha$, acting on the same
set of elements, the condition $g(\sigma,\alpha)=0$ is equivalent to
requiring that the cycles of $\alpha$ list the elements of a {\em
  noncrossing partition}  according to the circular order determined by
$\sigma$. Hence the definition of a refinement may be equivalently
restated by requiring that, starting with the decomposition
$\gamma=\gamma_1\cdots \gamma_t$ of $\gamma$ into pairwise disjoint
cycles, we replace each cycle with a permutation $\theta_i$ whose cycles
represent a noncrossing partition of the points moved by $\gamma_i$,
with respect to the circular order represented by $\gamma_i$. 
The following is an immediate consequence of
the definition of a refinement.
\begin{corollary}
\label{cor:refine2}  
Let $\gamma$ and $\theta$ be permutations of the same set. Then $\theta$
is a refinement of $\gamma$ if and only if $\theta^{-1}$
is a refinement of $\gamma^{-1}$.
\end{corollary}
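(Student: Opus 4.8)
The plan is to unwind the definition of a refinement and check that it is manifestly symmetric under simultaneously inverting both permutations. Suppose $\theta$ is a refinement of $\gamma$, witnessed by decompositions $\gamma=\gamma_1\cdots\gamma_t$ into pairwise disjoint cycles of $\gamma$ and $\theta=\theta_1\cdots\theta_t$ into products of disjoint cycles of $\theta$, with $\gamma_i$ and $\theta_i$ acting on the same set of elements and $g(\theta_i,\gamma_i)=0$ for each $i$. I would take inverses termwise: since the $\gamma_i$ are pairwise disjoint, $\gamma^{-1}=\gamma_1^{-1}\cdots\gamma_t^{-1}$, and each $\gamma_i^{-1}$ is a cycle of $\gamma^{-1}$ on the same element set as $\gamma_i$; similarly $\theta^{-1}=\theta_1^{-1}\cdots\theta_t^{-1}$ with each $\theta_i^{-1}$ a product of disjoint cycles of $\theta^{-1}$ on the same element set. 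So the only thing left to verify is that $g(\theta_i^{-1},\gamma_i^{-1})=0$, i.e.\ that the genus of a pair of permutations is invariant under inverting both coordinates.

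That invariance follows directly from the genus formula~\eqref{eq:genusdef}. Writing the formula for the pair $(\theta_i^{-1},\gamma_i^{-1})$, I need $z(\theta_i^{-1})+z(\gamma_i^{-1})+z(\gamma_i\theta_i^{-1})=z(\theta_i)+z(\gamma_i)+z(\gamma_i^{-1}\theta_i)$. Inverting a permutation does not change its cycle type, so $z(\theta_i^{-1})=z(\theta_i)$ and $z(\gamma_i^{-1})=z(\gamma_i)$. For the third term, note that $\gamma_i\theta_i^{-1}=(\theta_i\gamma_i^{-1})^{-1}$, so $z(\gamma_i\theta_i^{-1})=z(\theta_i\gamma_i^{-1})$, and $\theta_i\gamma_i^{-1}$ is conjugate to $\gamma_i^{-1}\theta_i$ (conjugate by $\gamma_i$), hence has the same number of cycles. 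Thus the right-hand side of~\eqref{eq:genusdef} is the same for $(\theta_i,\gamma_i)$ and for $(\theta_i^{-1},\gamma_i^{-1})$, and since $n$ is unchanged, $g(\theta_i^{-1},\gamma_i^{-1})=g(\theta_i,\gamma_i)=0$. This gives that $\theta^{-1}$ is a refinement of $\gamma^{-1}$, and applying the same implication to the pair $(\gamma^{-1},\theta^{-1})$ (using that inverting twice returns the original permutations) yields the converse.

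The only mild subtlety — really the only place one has to be careful — is the bookkeeping that the termwise inverse of a valid witnessing decomposition is again a valid witnessing decomposition: that disjointness of cycles is preserved under inversion, that "product of disjoint cycles of $\theta$" is preserved, and that the element sets match up. None of this is deep; it is just a matter of spelling out that $(\gamma_i)^{-1}$ and $(\theta_i)^{-1}$ play exactly the roles that $\gamma_i$ and $\theta_i$ did. I would present the argument in the compact two-step form above: prove one direction, then invoke symmetry.
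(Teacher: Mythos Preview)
Your proof is correct and follows exactly the route the paper has in mind: the paper states the corollary as ``an immediate consequence of the definition of a refinement'' without further argument, and your proof is precisely the direct verification of that claim, including the computation (later alluded to in the paper and left to the reader) that the mirrored pair $(\sigma^{-1},\alpha^{-1})$ has the same genus as $(\sigma,\alpha)$.
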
  
Furthermore, refinements can be characterized as follows.
\begin{proposition}
\label{prop:refinement}
Let $\theta$ and $\gamma$ be permutations of an $n$-element set. 
The permutation $\theta$ is a refinement of $\gamma$ if and only if the
two following conditions are satisfied:
\begin{enumerate}
\item For any i, the two elements i and $\theta(i)$ are in the same cycle of
  $\gamma$.
\item $z(\theta^{-1}\gamma) + z(\theta) = n + z(\gamma)$ holds.   
\end{enumerate}  
\end{proposition}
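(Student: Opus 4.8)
The plan is to reduce the equivalence to a comparison carried out one cycle of $\gamma$ at a time, resting on two elementary facts. \emph{Fact (a):} for a permutation $\pi$ of a finite set $S$, the number $|S|-z(\pi)$ equals the least number of transpositions whose product is $\pi$, so $|S|-z(\pi\rho)\le\bigl(|S|-z(\pi)\bigr)+\bigl(|S|-z(\rho)\bigr)$ for all permutations $\pi,\rho$ of $S$. \emph{Fact (b):} if $\beta$ is a single cycle on $S$ and $\tau$ is any permutation of $S$, then $z(\beta)=1$, so \eqref{eq:genusdef} (together with the invariance of $z$ under inversion, so that $z(\beta^{-1}\tau)=z(\tau^{-1}\beta)$) shows that $g(\tau,\beta)=0$ is equivalent to $z(\tau)+z(\tau^{-1}\beta)=|S|+1$; in other words, over a single cycle ``genus zero'' means precisely that the factorization $\beta=\tau\cdot(\tau^{-1}\beta)$ attains equality in Fact (a). Thus the whole proposition should fall out of comparing Fact (a), applied cyclewise, with condition (2).

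For the implication ``$\theta$ a refinement of $\gamma$ $\Rightarrow$ (1) and (2)'', I would start from witnessing decompositions $\gamma=\gamma_1\cdots\gamma_t$, $\theta=\theta_1\cdots\theta_t$ with $\gamma_i,\theta_i$ acting on a common set $S_i$ and $g(\theta_i,\gamma_i)=0$; adjoining trivial cycles if needed, one may assume $S_1,\dots,S_t$ partition the ground set, so $t=z(\gamma)$. Condition (1) is then immediate: if $i\in S_j$ then $\theta(i)=\theta_j(i)\in S_j$, and $S_j$ is a single cycle of $\gamma$. For (2), disjointness of supports gives $z(\theta)=\sum_j z(\theta_j)$, $z(\theta^{-1}\gamma)=\sum_j z(\theta_j^{-1}\gamma_j)$ and $\sum_j|S_j|=n$, while Fact (b) gives $z(\theta_j)+z(\theta_j^{-1}\gamma_j)=|S_j|+1$ for each $j$; summing over $j$ yields $z(\theta)+z(\theta^{-1}\gamma)=n+t=n+z(\gamma)$.

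For the converse I would use (1) to note that every cycle of $\theta$ lies inside a cycle of $\gamma$, so $\gamma$, $\theta$, and hence $\theta^{-1}\gamma$ all preserve the partition into the cycles $S_1,\dots,S_t$ of $\gamma$; setting $\gamma_j:=\gamma|_{S_j}$, $\theta_j:=\theta|_{S_j}$ gives valid factorizations $\gamma=\gamma_1\cdots\gamma_t$, $\theta=\theta_1\cdots\theta_t$ with each $\theta_j$ a product of cycles of $\theta$ and $(\theta^{-1}\gamma)|_{S_j}=\theta_j^{-1}\gamma_j$. Now (2) rewrites as $\sum_j\bigl[(|S_j|-z(\theta_j))+(|S_j|-z(\theta_j^{-1}\gamma_j))\bigr]=\sum_j(|S_j|-1)$, whereas Fact (a) gives the termwise lower bound $(|S_j|-z(\theta_j))+(|S_j|-z(\theta_j^{-1}\gamma_j))\ge|S_j|-z(\gamma_j)=|S_j|-1$; hence equality holds for every $j$, which by Fact (b) is exactly $g(\theta_j,\gamma_j)=0$, so the decompositions witness that $\theta$ is a refinement of $\gamma$. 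The point needing the most care is the bookkeeping around cycles of length one: one must check that (1) forces $\theta$ to fix every fixed point of $\gamma$, so that the partition into $\gamma$-cycles really is $\theta$-invariant and the singleton blocks contribute $0=|S_j|-1$ consistently on both sides. Once those conventions are pinned down, both directions reduce to the cyclewise (in)equality of Facts (a) and (b).
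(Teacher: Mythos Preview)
Your proof is correct and follows essentially the same route as the paper's: decompose along the cycles of $\gamma$, rewrite $g(\theta_j,\gamma_j)=0$ as $z(\theta_j)+z(\theta_j^{-1}\gamma_j)=|S_j|+1$, sum over $j$ for the forward direction, and for the converse use that each summand satisfies the inequality $z(\theta_j)+z(\theta_j^{-1}\gamma_j)\le |S_j|+1$ so that equality of the totals forces equality termwise. The only cosmetic difference is that the paper phrases the termwise inequality as ``the genus of any hypermap is nonnegative,'' whereas you derive it from the subadditivity of $|S|-z(\cdot)$ via minimal transposition factorizations; over a single cycle $\gamma_j$ these two statements are literally the same inequality, so the arguments coincide.
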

\begin{proof}
The first condition is equivalent to stating that each for each cycle of
$\gamma$ there is a product of cycles of $\theta$ acting on the
same set. Introducing $n_i$ as the cycle length of $\gamma_i$, the
condition $g(\theta_i,\gamma_i)=0$ is equivalent to
$$
0=n_i+2-z(\gamma_i)-z(\theta_i)-z(\gamma_i^{-1}\theta_i). 
$$
The second condition may be obtained by adding all equations of the
above form, keeping in mind that the number of equations to be added is
$z(\gamma)$. Conversely, the second condition implies that all
$g(\theta_i,\gamma_i)=0$ as the genus of any hypermap is nonnegative.  
\end{proof}  
\begin{corollary}
\label{cor:refine1}  
If $\gamma$ and $\theta$ are permutations of the
same $n$-element set then $\theta$ is a refinement of $\gamma$ if and
only if the same holds for $\theta^{-1}\gamma$.
\end{corollary}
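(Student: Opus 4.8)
The plan is to derive everything from the characterization of refinements in Proposition~\ref{prop:refinement}. Set $\phi \df \theta^{-1}\gamma$. I want to show that conditions (1) and (2) of that proposition hold for the pair $(\theta,\gamma)$ if and only if they hold for the pair $(\phi,\gamma)$; the corollary then follows at once by applying Proposition~\ref{prop:refinement} in both directions.

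For condition (1) I would first reformulate it: condition (1) for $(\theta,\gamma)$ says that $\theta$ maps each cycle of $\gamma$, viewed as a subset of the $n$-element set, into itself, and hence---being a bijection of a finite set---onto itself. In other words condition (1) holds for $(\theta,\gamma)$ precisely when $\theta$ stabilizes the set partition of $\{1,\dots,n\}$ whose blocks are the cycles of $\gamma$. Since $\gamma$ itself stabilizes this partition and the permutations stabilizing a fixed partition form a group, $\theta$ stabilizes it if and only if $\theta^{-1}\gamma$ does. Thus condition (1) holds for $(\theta,\gamma)$ if and only if it holds for $(\phi,\gamma)$.

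For condition (2) I would compute directly. Condition (2) for $(\phi,\gamma)$ reads $z(\phi^{-1}\gamma) + z(\phi) = n + z(\gamma)$. Now $\phi = \theta^{-1}\gamma$, so $\phi^{-1}\gamma = \gamma^{-1}\theta\gamma$, which is conjugate to $\theta$ and therefore has the same number of cycles, $z(\phi^{-1}\gamma) = z(\theta)$; also $z(\phi) = z(\theta^{-1}\gamma)$. Hence condition (2) for $(\phi,\gamma)$ is exactly the equation $z(\theta) + z(\theta^{-1}\gamma) = n + z(\gamma)$, i.e.\ condition (2) for $(\theta,\gamma)$. Combining the two equivalences gives the result. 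There is essentially no obstacle; the only points to get right are the recognition that condition (1) is the statement that $\theta$ normalizes the cycle partition of $\gamma$ (making its invariance under left multiplication by $\gamma$ transparent) and the use of conjugation-invariance of the cycle count for condition (2).
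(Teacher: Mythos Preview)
Your argument is correct and is precisely the intended derivation: the paper states Corollary~\ref{cor:refine1} immediately after Proposition~\ref{prop:refinement} without separate proof, so verifying that conditions~(1) and~(2) transfer from $(\theta,\gamma)$ to $(\theta^{-1}\gamma,\gamma)$ via the stabilizer-group observation and the conjugacy $\phi^{-1}\gamma=\gamma^{-1}\theta\gamma$ is exactly what is expected.
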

Notice that since the
pairs $\gamma_i, \theta_i$ define {\em monopoles} (that is, hypermaps
with a single vertex) of genus $0$, for a given
$\gamma_i$ the number of $\theta_i$ which are a refinements of
$\gamma_i$ is the Catalan number $C_{n_i}$, where $n_i$ is the number
of elements of the cycle $\gamma_i$. Thus we obtain the following. 
\begin{proposition}
Let $\gamma$ be a permutation whose cycles are of length $n_1,n_2,
\ldots, n_k$, respectively. Then the number of refinements of $\gamma$ is
$\prod_{i=1}^k C_{n_i}$. 
\end{proposition}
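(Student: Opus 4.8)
The plan is to reduce the count to a product: I would set up a bijection between the refinements $\theta$ of $\gamma$ and the $k$-tuples $(\theta_1,\dots,\theta_k)$ in which $\theta_i$ is a refinement of the $i$-th cycle $\gamma_i$ of $\gamma$, and then invoke, for each factor, the count $C_{n_i}$ recorded in the remark just above the statement (which itself rests on Cori~\cite[Theorem~1]{Cori} and the enumeration of noncrossing partitions).

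First I would check that the decomposition occurring in the definition of a refinement is essentially forced. If $\gamma=\gamma_1\cdots\gamma_t$ with the $\gamma_i$ pairwise disjoint cycles of $\gamma$ whose product is $\gamma$, then necessarily $t=k$ and $\{\gamma_1,\dots,\gamma_k\}$ is exactly the set of cycles of $\gamma$ (their order is immaterial, since disjoint cycles commute). Writing $S_i$ for the support of $\gamma_i$, condition (1) of Proposition~\ref{prop:refinement} says precisely that each $S_i$ is $\theta$-invariant; hence $\theta$ restricts to a permutation $\theta_i$ of $S_i$, and $\theta=\theta_1\cdots\theta_k$ with the $\theta_i$ acting on pairwise disjoint sets. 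Since $S_1,\dots,S_k$ partition the ground set, the cycles of $\theta$ are exactly those of the $\theta_i$, so each $\theta_i$ is a product of cycles of $\theta$; and $g(\theta_i,\gamma_i)=0$ for every $i$, because the sum of these $k$ equations (written as in the proof of Proposition~\ref{prop:refinement}) is condition (2), each summand being nonnegative. Thus $(\theta_1,\dots,\theta_k)$ is a well-defined tuple of refinements of the $\gamma_i$, uniquely determined by $\theta$.

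Conversely, given for each $i$ a refinement $\theta_i$ of $\gamma_i$, i.e.\ a permutation of $S_i$ with $g(\theta_i,\gamma_i)=0$, the product $\theta:=\theta_1\cdots\theta_k$ is a refinement of $\gamma$: the decompositions $\gamma=\gamma_1\cdots\gamma_k$ and $\theta=\theta_1\cdots\theta_k$ witness the definition, each $\theta_i$ being trivially a product of disjoint cycles of $\theta$ as the $S_i$ are pairwise disjoint. These two assignments are mutually inverse, so refinements of $\gamma$ correspond bijectively to such tuples. Finally, for a single cycle $\gamma_i$ of length $n_i$ a permutation $\theta_i$ of $S_i$ is a refinement of $\gamma_i$ exactly when $g(\theta_i,\gamma_i)=0$; since $g(\theta_i,\gamma_i)=g(\gamma_i,\theta_i)$ (the genus formula involves the quantities $z(\gamma_i^{-1}\theta_i)=z(\theta_i^{-1}\gamma_i)$ symmetrically) and $\gamma_i$ is circular, Cori's theorem identifies these $\theta_i$ with the noncrossing partitions of $S_i$ for the circular order given by $\gamma_i$, each block being cyclically ordered by $\gamma_i$; there are $C_{n_i}$ of them. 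Multiplying over $i$ yields $\prod_{i=1}^k C_{n_i}$. I do not foresee a real obstacle here; the only point deserving care is the forcedness of the decomposition in the definition of a refinement, which is what guarantees that $\theta\mapsto(\theta_1,\dots,\theta_k)$ is a bijection and not merely a surjection.
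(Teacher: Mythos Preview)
Your argument is correct and is essentially the same as the paper's: the proposition is stated there as an immediate consequence of the remark preceding it, namely that each pair $(\gamma_i,\theta_i)$ is a genus~$0$ monopole and hence the $\theta_i$ are counted by the Catalan number $C_{n_i}$. Your write-up simply makes explicit the bijection $\theta\leftrightarrow(\theta_1,\dots,\theta_k)$ and the forcedness of the decomposition, which the paper leaves implicit in the definition of a refinement.
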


A hypermap $(\sigma,\alpha')$ {\em spans}
the hypermap $(\sigma,\alpha)$ if $\alpha'$ is a refinement of
$\alpha$. Note that not all refinements $\alpha'$ of $\alpha$ have the
property that $(\sigma,\alpha')$ is a hypermap. The {\em complexity} of
a hypermap of genus $g$ is the number of unicellular hypermaps of genus
$g$ spanning it.

Using Proposition~\ref{prop:refinement} we may establish a bijection between the
spanning genus $k$ unicellular hypermaps of a hypermap $(\sigma,\alpha)$
and the spanning genus $(g(\sigma,\alpha)-k)$ unicellular hypermaps of
its {\em dual} $(\alpha^{-1}\sigma, \alpha^{-1})$ for
$k=0,1,\ldots,g(\sigma,\alpha)$ as follows. 

\begin{theorem}
\label{thm:dhtree}  
Let $(\sigma, \alpha)$ be a hypermap and let $\theta$ be a permutation
of the same set of points. Then $(\sigma, \theta)$ is a spanning
unicellular hypermap of $(\sigma, \alpha)$ if and only if 
$(\alpha^{-1}\sigma,\alpha^{-1}\theta)$  is a 
spanning unicellular hypermap of the dual hypermap
$(\alpha^{-1}\sigma,\alpha^{-1})$. Furthermore, if the above are
satisfied  we have
$$
g(\sigma,\theta) + g(\alpha^{-1}\sigma,\alpha^{-1}\theta) = g(\sigma,\alpha).
$$
\end{theorem}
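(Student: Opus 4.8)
The plan is to reduce both assertions to Proposition~\ref{prop:refinement} and the Euler--Jacques formula~\eqref{eq:genusdef}, after isolating one algebraic identity. That identity is $(\alpha^{-1}\theta)^{-1}(\alpha^{-1}\sigma)=\theta^{-1}\alpha\alpha^{-1}\sigma=\theta^{-1}\sigma$: the face permutation of $(\alpha^{-1}\sigma,\alpha^{-1}\theta)$ coincides with the face permutation $\theta^{-1}\sigma$ of $(\sigma,\theta)$. In particular $(\sigma,\theta)$ is unicellular exactly when $(\alpha^{-1}\sigma,\alpha^{-1}\theta)$ is, so the ``unicellular'' clauses on the two sides of the claimed equivalence are literally the single statement $z(\theta^{-1}\sigma)=1$. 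Moreover, when this holds, $\theta^{-1}\sigma$ is an $n$-cycle and hence generates a transitive group; since $\theta^{-1}\sigma$ lies both in $\langle\sigma,\theta\rangle$ and in $\langle\alpha^{-1}\sigma,\alpha^{-1}\theta\rangle$, both pairs are automatically hypermaps. Thus the transitivity requirement hidden in the word ``hypermap'' is free on both sides and needs no separate discussion.

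It then remains to match the ``spanning'' clauses, i.e.\ to show that $\theta$ is a refinement of $\alpha$ if and only if $\alpha^{-1}\theta$ is a refinement of $\alpha^{-1}$. I would do this by chaining the two corollaries: by Corollary~\ref{cor:refine1}, $\theta$ refines $\alpha$ iff $\theta^{-1}\alpha$ refines $\alpha$, and by Corollary~\ref{cor:refine2} the latter is equivalent to $(\theta^{-1}\alpha)^{-1}=\alpha^{-1}\theta$ being a refinement of $\alpha^{-1}$. Alternatively one can check directly that conditions (1) and (2) of Proposition~\ref{prop:refinement} transform into one another: condition (2) for $\alpha^{-1}\theta$ over $\alpha^{-1}$ reads $z(\theta^{-1})+z(\alpha^{-1}\theta)=n+z(\alpha^{-1})$, which after using $z(\pi)=z(\pi^{-1})$ is condition (2) for $\theta$ over $\alpha$; and $i$ and $\alpha^{-1}\theta(i)$ lie in a common cycle of $\alpha^{-1}$ exactly when $i$ and $\theta(i)$ lie in a common cycle of $\alpha$, because $\theta(i)$ and $\alpha^{-1}\theta(i)$ always do. Combined with the first paragraph, this yields the ``if and only if'' part.

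For the genus identity, assume all three pairs are valid spanning unicellular hypermaps and write~\eqref{eq:genusdef} for each of $(\sigma,\theta)$, $(\alpha^{-1}\sigma,\alpha^{-1}\theta)$ and $(\sigma,\alpha)$, using $(\alpha^{-1}\theta)^{-1}(\alpha^{-1}\sigma)=\theta^{-1}\sigma$ in the middle one. Forming (first equation) $+$ (second equation) $-$ (third equation), the terms $z(\sigma)$ and $z(\alpha^{-1}\sigma)$ cancel and one is left with $2\bigl(g(\sigma,\theta)+g(\alpha^{-1}\sigma,\alpha^{-1}\theta)-g(\sigma,\alpha)\bigr)$ equal to $n+2-z(\theta)-2z(\theta^{-1}\sigma)-z(\alpha^{-1}\theta)+z(\alpha)$. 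Substituting $z(\theta^{-1}\sigma)=1$ (unicellularity) turns the right-hand side into $n-z(\theta)-z(\alpha^{-1}\theta)+z(\alpha)$, which vanishes precisely because condition (2) of Proposition~\ref{prop:refinement} for $\theta$ refining $\alpha$ says $z(\alpha^{-1}\theta)+z(\theta)=n+z(\alpha)$. Hence $g(\sigma,\theta)+g(\alpha^{-1}\sigma,\alpha^{-1}\theta)=g(\sigma,\alpha)$.

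I do not anticipate a genuine obstacle: the argument is essentially bookkeeping. The two points deserving a little care are remembering that $z(\pi)=z(\pi^{-1})$ every time a cycle count shows up, and not silently dropping the transitivity clause in the definition of a hypermap --- which, as explained above, is dispatched once and for all by the observation that the $n$-cycle $\theta^{-1}\sigma$ sits inside both permutation groups involved.
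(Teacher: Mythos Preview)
Your proof is correct and follows essentially the same route as the paper's: the same chain Corollary~\ref{cor:refine1} $\to$ Corollary~\ref{cor:refine2} for the refinement equivalence, the same observation that $(\alpha^{-1}\theta)^{-1}\alpha^{-1}\sigma=\theta^{-1}\sigma$, and the same three-line genus computation finished off by condition~(2) of Proposition~\ref{prop:refinement}. Your explicit handling of transitivity via the $n$-cycle $\theta^{-1}\sigma$ is a nice touch that the paper leaves implicit.
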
  
\begin{proof}
By Corollary~\ref{cor:refine1}, $\theta$ is a refinement of $\alpha$ if
and only if $\theta^{-1}\alpha$ is a refinement of $\alpha$. By
Corollary~\ref{cor:refine2}, $\theta^{-1}\alpha$ is a refinement of
$\alpha$ if and only if $\alpha^{-1}\theta$ is a refinement of
$\alpha^{-1}$. Observe furthermore that the face permutation of
$(\alpha^{-1}\sigma,\alpha^{-1}\theta)$ is
$$
(\alpha^{-1}\theta)^{-1}\alpha^{-1}\sigma=
\theta^{-1}\alpha\alpha^{-1}\sigma=
\theta^{-1}\sigma.
$$
Combining the above observations we obtain that 
$(\sigma, \theta)$ is a spanning
unicellular hypermap of $(\sigma, \alpha)$ if and only if 
$(\alpha^{-1}\sigma,\alpha^{-1}\theta)$  is a 
spanning unicellular hypermap of $(\alpha^{-1}\sigma,\alpha^{-1})$.

Finally, the stated equation connecting the genuses holds because
of~(\ref{eq:genusdef}) and the second statement in
Proposition~\ref{prop:refinement}. Indeed, (\ref{eq:genusdef}) yields
\begin{align*}
  g(\sigma,\theta)&=
  \frac{1}{2}\cdot \left(n+2-z(\sigma)-z(\theta)-z(\theta^{-1}\sigma)\right)\\ 
  g(\alpha^{-1}\sigma, \alpha^{-1}\theta) &=
  \frac{1}{2}\cdot
  \left(n+2-z(\alpha^{-1}\sigma)-z(\alpha^{-1}\theta)-z(\theta^{-1}\sigma)\right) 
  \text{ and}\\   
  -g(\sigma,\alpha)&=
  \frac{1}{2}\cdot \left(-n-2+z(\sigma)+z(\alpha)+z(\alpha^{-1}\sigma)\right).\\\end{align*}  
  Using $z(\theta^{-1}\sigma)=1$ we obtain
\begin{align*}
g(\sigma,\theta) + g(\sigma\alpha^{-1}, \theta\alpha^{-1})-
g(\sigma,\alpha)&=
\frac{1}{2}\cdot\left(n-z(\theta)-z(\alpha^{-1}\theta)+z(\alpha)\right).
\end{align*}
We may replace $z(\alpha^{-1}\theta)$ with 
$z((\alpha^{-1}\theta)^{-1})=z(\theta^{-1}\alpha)$
on the right hand side of the last equation, which is then is zero by
the second statement in Proposition~\ref{prop:refinement}.
\end{proof}

Note that the drawing conventions stated for planar hypermaps above may be
easily generalized to hypermaps drawn on an oriented surface of a fixed
genus. As noted earlier, in such a setting, the definition of a
refinement $\theta$ of $\gamma$ requires to replace each cycle
$\gamma_i$ of $\gamma$ by a noncrossing partition in which the parts respect the cyclic order of $\gamma_i$.
Furthermore, a permutation $\theta'$ refines the permutation $\theta$
further exactly when each cycle of $\theta'$ is contained in a cycle
$\gamma_i$ of $\gamma$, and the restriction of $\theta'$ onto the set of
elements permuted by $\gamma_i$ is a noncrossing partition, which is a
refinement of the noncrossing partition associated to the action of
$\theta$ on the elements permuted by $\gamma_i$. For noncrossing
partitions we use the term refinement in the same sense as
Kreweras~\cite{Kreweras}, who has shown that noncrossing partitions form
a lattice under refinement.

The key result we use depends on the notion of the {\em hyperdual} of a
hypermap, introduced in~\cite{Cori-Penaud}. The {\em dual} of the
hypermap $(\sigma,\alpha)$ is the hypermap
$(\alpha^{-1}\sigma,\alpha^{-1})$, this notion of duality generalizes
the usual duality of planar graphs, exchanging vertices and faces. The
{\em reciprocal} of the hypermap $(\sigma,\alpha)$ is the hypermap
$(\alpha,\sigma)$. Taking the reciprocal generalizes taking the line
graph of a graph. Repeated use of taking the dual and the reciprocal
yields the following commutative diagram, which itself is a
generalization of Tutte's ``trinity''~\cite{Tutte} from graphs to hypermaps:

\[
\begin{tikzcd}[row sep=tiny]
{}  & (\alpha,\sigma)\arrow[dash]{r}{d}& (\sigma^{-1}\alpha,\sigma^{-1})\arrow[dash]{rd}{r} & \\
(\sigma,\alpha) \arrow[dash]{ru}{r} \arrow[dash,swap]{rd}{d} & & &
(\sigma^{-1},\sigma^{-1}\alpha) \\
  & (\alpha^{-1}\sigma,\alpha^{-1})\arrow[dash,swap]{r}{r} &
(\alpha^{-1},\alpha^{-1}\sigma)\arrow[dash,swap]{ru}{d} & \\
\end{tikzcd}  
\]

All hypermaps in the hexagonal diagram above have the same genus. 
Diagonally opposite to $(\sigma,\alpha)$ we find the {\em hyperdual} of
$(\sigma,\alpha)$, defined as $(\sigma^{-1},\sigma^{-1}\alpha)$. Note
that taking the hyperdual is also an involution. Besides the hypermaps
shown in the above hexagonal diagram, sometimes we will also consider 
the {\em mirrored hypermap} $(\sigma^{-1},\alpha^{-1})$ and the {\em
  Kreweras dual} $(\sigma, \alpha^{-1}\sigma)$ of the hypermap
$(\sigma,\alpha)$. The proof of the fact that the mirrored hypermap
$(\sigma^{-1},\alpha^{-1})$ has the same genus as $(\sigma,\alpha)$ is
left to the reader. The Kreweras dual of a noncrossing partition (that
is, a genus zero monopole) was introduced in~\cite{Kreweras}. The
Kreweras dual of a hypermap $(\sigma,\alpha)$ is
obtained from its hyperdual $(\sigma^{-1},\sigma^{-1}\alpha)$, by taking
its mirrored hypermap.

\subsection{Meanders, semimeanders, and stamp folding}
\label{sec:pmeanders}

Meanders and semimeanders have a vast literature, here we use the
terminology introduced in~\cite{DiFrancesco}. A {\em meander} of order
$n$ is a closed, self-avoiding loop, crossing a straight line at $2n$
points. We can visualize a meander as a closed, self-avoiding  walk  in
the plane crossing a river at $2n$ bridges. If we number the crossings
on the straight line left to right, and then list the crossings in the
order in which the meander encounters them, we obtain a {\em meandric
  permutation}. As pointed out by M.\ LaCroix~\cite{LaCroix}, before the
modern theory of enumerating meanders was developed,
Rosenstiehl has studied these permutations under the
name of {\em planar permutations}~\cite{Rosenstiehl}.  

A {\em semimeander} is a closed,
self-avoiding walk that crosses a half-line at $n$ points. As it is
stated in~\cite{DiFrancesco}, the number of semimeanders of order $n$
is the same as the number of {\em foldings $n-1$
stamps}~\cite{Lunnon,Touchard}. We will use the following equivalent
definition of stamp foldings.
\begin{definition}
  A {\em folding of $n-1$ stamps} is a permutation
  $\underline{\pi}=(\pi(1),\ldots, \pi(n))$
of the set $\{1,2,\ldots,n\}$, written as an ordered list, satisfying
the following conditions.
\begin{enumerate}
\item $\pi(1)=1$.
\item The list $\underline{\pi}$ does not contain any sublist of the
  form  $(2i,2j,2i+1,2j+1)$.
\item The list $\underline{\pi}$ does not contain any sublist of the
  form  $(2i-1,2j-1,2i,2j)$.
\end{enumerate}
\end{definition}
Arch diagrams representing stamp foldings are shown in the right half of
Figures~\ref{fig:semimeander}  and \ref{fig:semimeander2} in
Section~\ref{sec:semimeanders}. In both diagrams we see an ordered list
beginning with $1$. Each even label $2i$ is connected to the label
$2i+1$ by an upright arch (if $2i+1$ exists as a label), and each 
odd label $2i-1$ is connected to the label
$2i$ by an upside-down arch (if $2i$ exists as a label). Conditions (2)
and (3) are equivalent to stating that the resulting set of arcs does
not cross. 

\section{A consequence of Mach\`\i 's result and its interpretation}
\label{sec:Machi}

The starting point of our present investigation is the following result
of Mach\`\i~\cite{Machi}, generalizing a result of
Cori~\cite{Cori-hrec,Cori-Penaud}. 

\begin{theorem}
\label{thm:Machi}
Given a hypermap $(\sigma, \alpha)$, there is a bijection between the
genus $g$ unicellular hypermaps  $\theta$ spanning its hyperdual
$(\sigma^{-1},\sigma^{-1}\alpha)$, and the set
$C_{\sigma}(\sigma,\alpha)$, defined as the set of circular permutations 
$\zeta$ satisfying $g(\sigma,\zeta)=g(\sigma, \alpha)$ and
$g(\alpha,\zeta)=0$. The bijection is given by the rule $\theta\mapsto
\zeta=\sigma\theta$.
\end{theorem}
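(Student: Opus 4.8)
The plan is to verify that the map $\theta \mapsto \zeta = \sigma\theta$ is a well-defined bijection between the two sets by translating each of the three defining conditions on $\theta$ (that $(\sigma^{-1}, \theta)$ be a hypermap, that it be unicellular, and that it have genus $g$) into the corresponding conditions on $\zeta$ (that $(\sigma, \alpha, \zeta)$ generate a transitive group, that $\zeta$ be circular, and that the two genus equalities hold). First I would record the hyperdual's data explicitly: the vertex permutation is $\sigma^{-1}$, the hyperedge permutation is $\sigma^{-1}\alpha$, and so the face permutation of $(\sigma^{-1}, \theta)$ is $\theta^{-1}\sigma^{-1}$. Since $\theta$ spans $(\sigma^{-1}, \sigma^{-1}\alpha)$, by definition $\theta$ is a refinement of $\sigma^{-1}\alpha$; I would then apply Proposition~\ref{prop:refinement}, together with the genus formula~\eqref{eq:genusdef} for $(\sigma^{-1}, \theta)$, to extract the numerical identities that constrain $z(\theta)$ and $z(\theta^{-1}\sigma^{-1})$.

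Next I would substitute $\theta = \sigma^{-1}\zeta$ and track what each condition becomes. The face permutation of the hyperdual spanning hypermap, $\theta^{-1}\sigma^{-1} = \zeta^{-1}\sigma\sigma^{-1} = \zeta^{-1}$, has exactly one cycle precisely when $\zeta$ is circular; this is where the unicellularity of $\theta$ turns into the requirement that $\zeta$ be a single cycle. The refinement condition ``$\theta$ is a refinement of $\sigma^{-1}\alpha$'' I would push through Corollaries~\ref{cor:refine2} and~\ref{cor:refine1} exactly as in the proof of Theorem~\ref{thm:dhtree}: $\theta$ refines $\sigma^{-1}\alpha$ iff $\theta^{-1}\sigma^{-1}\alpha$ refines $\sigma^{-1}\alpha$ iff (conjugating/inverting) $\alpha^{-1}\sigma\theta = \alpha^{-1}\zeta$ refines $\alpha^{-1}$, which after one more application of Corollary~\ref{cor:refine1} is equivalent to $\zeta$ being a refinement of... — here I would be careful to land on the statement $g(\alpha,\zeta)=0$, using the characterization from~\cite[Theorem~1]{Cori} that for circular $\zeta$ the condition $g(\alpha,\zeta)=0$ says exactly that the cycles of $\alpha$ form a noncrossing partition along $\zeta$, i.e.\ $\alpha$ refines $\zeta$ (equivalently $\alpha^{-1}\zeta$ refines $\zeta$, etc.). Finally, the genus bookkeeping: applying~\eqref{eq:genusdef} to $(\sigma^{-1},\theta)$ with $z(\theta^{-1}\sigma^{-1}) = z(\zeta^{-1}) = 1$ gives $2g(\sigma^{-1},\theta) = n+1 - z(\sigma^{-1}) - z(\theta) = n+1 - z(\sigma) - z(\sigma^{-1}\zeta)$, and applying it to $(\sigma,\zeta)$ gives $2g(\sigma,\zeta) = n+2 - z(\sigma) - z(\zeta) - z(\zeta^{-1}\sigma) = n+1 - z(\sigma) - z(\zeta^{-1}\sigma)$; comparing, and using $z(\sigma^{-1}\zeta) = z(\zeta^{-1}\sigma)$, I would conclude $g(\sigma^{-1},\theta) = g(\sigma,\zeta)$, so the genus-$g$ condition on $\theta$ matches $g(\sigma,\zeta) = g = g(\sigma,\alpha)$.

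For the transitivity/well-definedness bookkeeping I would note that the group $\langle \sigma, \alpha\rangle = \langle \sigma^{-1}, \sigma^{-1}\alpha\rangle$ is transitive by hypothesis, and that since $(\sigma^{-1},\theta)$ is assumed to be a hypermap, $\langle \sigma^{-1}, \theta\rangle = \langle \sigma, \sigma\theta\rangle = \langle \sigma, \zeta\rangle$ is transitive; since $\zeta$ is a single cycle on all $n$ points this is automatic in one direction, and conversely one checks that the preimage $\theta = \sigma^{-1}\zeta$ of any admissible $\zeta$ yields a genuine hypermap $(\sigma^{-1}, \theta)$. The map $\theta \mapsto \sigma\theta$ is visibly a bijection of sets of permutations with inverse $\zeta \mapsto \sigma^{-1}\zeta$, so once the two membership conditions have been shown to correspond under it, the theorem follows.

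The main obstacle I expect is the middle step: correctly chaining Corollaries~\ref{cor:refine2} and~\ref{cor:refine1} together with~\cite[Theorem~1]{Cori} so that the refinement condition on $\theta$ relative to $\sigma^{-1}\alpha$ comes out as precisely $g(\alpha,\zeta)=0$ rather than some neighboring condition; each corollary replaces a permutation by its inverse or multiplies by the ambient permutation, and it is easy to be off by one such operation. The genus identity, by contrast, is a mechanical consequence of~\eqref{eq:genusdef} once $z(\zeta^{-1}) = 1$ is in hand, and the bijectivity is formal.
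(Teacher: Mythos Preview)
Your approach is genuinely different from the paper's. The paper does not prove Theorem~\ref{thm:Machi} from scratch: it treats Mach\`\i's original result as a black box, translates it from left-to-right to right-to-left multiplication (which yields a bijection for the hypermap $(\sigma^{-1},\alpha\sigma^{-1})$ rather than the hyperdual $(\sigma^{-1},\sigma^{-1}\alpha)$), and then observes that conjugation by $\sigma$ gives an isomorphism between these two hypermaps which, composed with Mach\`\i's map $\theta'\mapsto\theta'\sigma$, produces $\theta\mapsto\sigma\theta$. Your proposal instead attempts a direct verification that the three defining conditions on $\theta$ correspond under $\zeta=\sigma\theta$ to the three conditions defining $C_\sigma(\sigma,\alpha)$.

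Two of your three translations are correct and cleanly argued: unicellularity of $(\sigma^{-1},\theta)$ is exactly $z(\zeta)=1$, and the genus identity $g(\sigma^{-1},\theta)=g(\sigma,\zeta)$ follows mechanically from~\eqref{eq:genusdef} once $z(\zeta)=1$. The gap is in the refinement step, and it is real. Your chain of corollaries slips: applying Corollary~\ref{cor:refine2} to ``$\theta^{-1}\sigma^{-1}\alpha$ refines $\sigma^{-1}\alpha$'' gives ``$\alpha^{-1}\sigma\theta=\alpha^{-1}\zeta$ refines $\alpha^{-1}\sigma$,'' not ``refines $\alpha^{-1}$,'' and from there no further application of Corollaries~\ref{cor:refine2} or~\ref{cor:refine1} lands on the statement $g(\alpha,\zeta)=0$. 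The refinement condition on $\theta$ relative to $\sigma^{-1}\alpha$ simply does not transform, by those corollaries alone, into the condition that $\alpha$ refines the circular permutation $\zeta$.

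What does work is to abandon the attempt to match the refinement condition in isolation and instead match the three conditions as a system of cycle-count equalities. Under $z(\zeta)=1$ and $g(\sigma^{-1},\theta)=g(\sigma,\alpha)$, the numerical clause of Proposition~\ref{prop:refinement} for ``$\theta$ refines $\sigma^{-1}\alpha$'' becomes, after substituting $\theta^{-1}\sigma^{-1}\alpha=\zeta^{-1}\alpha$ and eliminating $z(\theta)$ and $z(\alpha^{-1}\sigma)$ via~\eqref{eq:genusdef}, exactly $z(\alpha)+z(\zeta^{-1}\alpha)=n+1$, which is $g(\alpha,\zeta)=0$. For the reverse direction you must also recover clause~(1) of Proposition~\ref{prop:refinement} (cycle containment) from the numerical equality; this is where you need the fact---implicit in the noncrossing-partition description of the absolute order on $S_n$---that equality in $z(\theta)+z(\theta^{-1}\gamma)\le n+z(\gamma)$ already forces each cycle of $\theta$ to lie inside a cycle of $\gamma$. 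Without that, your backward implication is incomplete.
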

\begin{proof}
Although the above statement appears to be identical to Mach\`\i 's
result, we need to check it is still valid, even though we multiply
permutations right to left, whereas he multiplies them left to right.

We begin with translating Mach\`\i 's result into right to left
multiplication form: given  a hypermap $(\sigma, \alpha)$, there is a
bijection between the spanning  genus $g$ unicellular hypermaps
$\theta'$ of the hypermap $(\sigma^{-1},\alpha\sigma^{-1})$, and the set
$C_{\sigma}(\sigma,\alpha)$, defined as the set of circular permutations 
$\zeta$ satisfying $g(\sigma,\zeta)=g(\sigma, \alpha)$ and
$g(\alpha,\zeta)=0$. The bijection is given by the rule $\theta'\mapsto
\zeta=\theta'\sigma$. Note that the definition of
$C_{\sigma}(\sigma,\alpha)$ remains unchanged, as the definition of the
genus depends only on counting cycles in a way that is independent of
the direction of the multiplication, as a consequence of the identity
\begin{equation}
z(\alpha\beta)=z(\beta\alpha)  
\end{equation}
which is a direct consequence of the fact that
$\alpha\beta=\beta^{-1}(\beta\alpha)\beta$ is a conjugate of
$\beta\alpha$. The hypermap $(\sigma^{-1},\alpha\sigma^{-1})$ is not the
hyperdual of $(\sigma,\alpha)$ in our terminology, but it is isomorphic
to it:  $\alpha\sigma^{-1}=\sigma(\sigma^{-1}\alpha)\sigma^{-1}$ is a
conjugate of $\sigma^{-1}\alpha$ and sending each $i$ into $\sigma(i)$
induces map from $(\sigma^{-1},\sigma^{-1}\alpha)$ to
$(\sigma^{-1},\alpha\sigma^{-1})$ that is an isomorphism of
hypermaps. As a consequence, sending each spanning unicellular hypermap
$\theta$ of $(\sigma^{-1},\sigma^{-1}\alpha)$ into
$\theta'=\sigma\theta\sigma^{-1}$ we 
obtain a bijection between the spanning unicellular hypermaps of
$(\sigma^{-1},\sigma^{-1}\alpha)$ and the spanning unicellular hypermaps of
$(\sigma^{-1},\alpha\sigma^{-1})$. Composing the map $\theta\mapsto
\theta'=\sigma\theta\sigma^{-1}$  with the map $\theta'\mapsto
\zeta=\theta'\sigma$ we obtain the desired bijection $\theta\mapsto
\zeta=\sigma\theta\sigma^{-1}\sigma=\sigma\theta$.
\end{proof}

By replacing the hypermap $(\sigma,\alpha)$ by its Kreweras dual
$(\sigma,\alpha^{-1}\sigma)$ in Theorem~\ref{thm:Machi} we obtain
the following consequence.
\begin{corollary}
\label{cor:MCP} There is a bijection between the spanning genus $g$
unicellular hypermaps $\theta$ of a hypermap $(\sigma,\alpha)$ of genus
$g$ and the set 
$$C_{\sigma}(\sigma,\alpha^{-1}\sigma)=\{\zeta\::\: z(\zeta)=1,
g(\sigma,\zeta)=g(\sigma,\alpha^{-1}\sigma),
g(\alpha^{-1}\sigma,\zeta)=0\},$$
taking each spanning unicellular hypermap
$\theta$ into $\zeta=\theta^{-1}\sigma$. 
\end{corollary}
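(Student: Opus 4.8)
The plan is to deduce Corollary~\ref{cor:MCP} directly from Theorem~\ref{thm:Machi} by substituting the Kreweras dual $(\sigma,\alpha^{-1}\sigma)$ for the hypermap $(\sigma,\alpha)$, and then using Theorem~\ref{thm:dhtree} (or, more economically, Corollary~\ref{cor:refine1} together with the face-permutation computation in its proof) to translate ``spanning unicellular hypermap of the hyperdual of the Kreweras dual'' into ``spanning unicellular hypermap of $(\sigma,\alpha)$ itself.'' So first I would write down what Theorem~\ref{thm:Machi} says when applied to $(\sigma,\alpha^{-1}\sigma)$: there is a bijection $\theta'\mapsto\zeta=\sigma\theta'$ between the genus $g$ unicellular hypermaps $\theta'$ spanning the hyperdual $(\sigma^{-1},\sigma^{-1}(\alpha^{-1}\sigma))$ of $(\sigma,\alpha^{-1}\sigma)$ and the set $C_\sigma(\sigma,\alpha^{-1}\sigma)$, noting that the genus of $(\sigma,\alpha^{-1}\sigma)$ equals $g(\sigma,\alpha)$ because $(\sigma,\alpha^{-1}\sigma)$ sits in the hexagonal Tutte diagram for $(\sigma,\alpha)$ after taking hyperdual and mirror, all of which preserve genus.

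Next I would identify the spanning unicellular hypermaps of the hyperdual of the Kreweras dual with those of $(\sigma,\alpha)$. The hyperdual of $(\sigma,\alpha^{-1}\sigma)$ is $(\sigma^{-1},\sigma^{-1}\alpha^{-1}\sigma)$. A permutation $\theta'$ with $(\sigma^{-1},\theta')$ a spanning unicellular hypermap of this means $\theta'$ is a refinement of $\sigma^{-1}\alpha^{-1}\sigma$, every $i$ and $\theta'(i)$ lie in a common cycle of $\sigma^{-1}\alpha^{-1}\sigma$, and the face count $z((\theta')^{-1}\sigma^{-1})=1$. I expect the cleanest route is to reparametrize by $\theta=(\sigma\theta'\sigma^{-1})^{-1}\cdot(\text{something})$; concretely, I will look for the substitution making $\zeta=\sigma\theta'$ equal $\theta^{-1}\sigma$ for $\theta$ a spanning unicellular hypermap of $(\sigma,\alpha)$. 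Setting $\theta^{-1}\sigma=\sigma\theta'$ gives $\theta=\sigma(\theta')^{-1}\sigma^{-1}$, so the claim to verify is: $(\sigma^{-1},\theta')$ is a spanning unicellular hypermap of $(\sigma^{-1},\sigma^{-1}\alpha^{-1}\sigma)$ if and only if $(\sigma,\sigma(\theta')^{-1}\sigma^{-1})$ is a spanning unicellular hypermap of $(\sigma,\alpha)$. Conjugation by $\sigma$ is an isomorphism of hypermaps and sends $(\sigma^{-1},\sigma^{-1}\alpha^{-1}\sigma)$ to $(\sigma^{-1},\alpha^{-1})$, the mirror of $(\sigma,\alpha)$; so after conjugating I must check that $(\sigma^{-1},\psi^{-1})$ spans $(\sigma^{-1},\alpha^{-1})$ iff $(\sigma,\psi)$ spans $(\sigma,\alpha)$, where $\psi=\sigma(\theta')^{-1}\sigma^{-1}\cdot$ — i.e. the mirror symmetry of the refinement relation, which is exactly Corollary~\ref{cor:refine2}. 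The genus bookkeeping ($g$ is preserved at each step, and unicellularity is the condition $z(\text{face})=1$) follows from~\eqref{eq:genusdef} and the conjugation-invariance $z(\alpha\beta)=z(\beta\alpha)$ already used in the proof of Theorem~\ref{thm:Machi}.

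The main obstacle will be bookkeeping the chain of conjugations and inversions without sign or side errors: there are four involutive operations in play (dual, reciprocal, hyperdual, mirror, Kreweras dual), the paper multiplies right-to-left, and one must track carefully which of $\theta'$, $(\theta')^{-1}$, $\sigma\theta'\sigma^{-1}$ appears at each stage so that the composite bijection really lands on $\zeta=\theta^{-1}\sigma$ with $\theta$ ranging over spanning unicellular hypermaps of $(\sigma,\alpha)$. I would therefore lay out the composite map explicitly as $\theta\mapsto\theta'=(\sigma^{-1}\theta\sigma)^{-1}\mapsto\zeta=\sigma\theta'=\sigma\sigma^{-1}\theta^{-1}\sigma=\theta^{-1}\sigma$, checking at each arrow that the source and target sets match by invoking Corollaries~\ref{cor:refine2} and~\ref{cor:refine1}, and finally verifying $C_\sigma(\sigma,\alpha^{-1}\sigma)$ literally unwinds to the displayed set $\{\zeta: z(\zeta)=1,\ g(\sigma,\zeta)=g(\sigma,\alpha^{-1}\sigma),\ g(\alpha^{-1}\sigma,\zeta)=0\}$, which is immediate from the definition of $C_\sigma$ in Theorem~\ref{thm:Machi}. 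No genuinely new idea is needed beyond Theorem~\ref{thm:Machi}; the content is the correct substitution and the mirror/inverse symmetry of refinement.
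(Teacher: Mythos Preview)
Your proposal is correct and follows exactly the paper's approach: the paper's proof is the single sentence ``replace $(\sigma,\alpha)$ by its Kreweras dual $(\sigma,\alpha^{-1}\sigma)$ in Theorem~\ref{thm:Machi},'' and you carry out precisely that substitution, additionally spelling out the conjugation/inversion bookkeeping (via Corollary~\ref{cor:refine2} and the isomorphism given by conjugation by $\sigma$) that the paper leaves implicit. Your explicit chain $\theta\mapsto\theta'=\sigma^{-1}\theta^{-1}\sigma\mapsto\zeta=\sigma\theta'=\theta^{-1}\sigma$ is exactly the hidden computation behind the paper's one-line proof.
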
  

Note that $\zeta=\theta^{-1}\sigma$ is the permutation whose only cycle is the
only face of the hypermap $(\sigma,\theta)$. The hypermap
$(\sigma,\theta^{-1}\sigma)$ is the Kreweras dual of $(\sigma,\theta)$,
requiring $g(\sigma,\zeta)=g(\sigma,\alpha^{-1}\sigma)$ is equivalent to
requiring that the spanning unicellular hypermap $(\sigma,\theta)$ must
have the same genus as $(\sigma,\alpha)$. Less immediate is the
following consequence:  
visiting the labels in the order of
$\zeta$ amounts to traversing the only face of $(\sigma,\theta)$
according to its orientation. During this traversal we visit the
faces of $(\sigma,\alpha)$ in their cyclic order in 
$\alpha^{-1}\sigma$. Least obvious is the fact that the above criteria on the
genuses and the number of cycles {\em characterize} the only faces of
spanning unicellular hypermaps. 

\begin{figure}[h]
%90%  
\begin{center}
\input{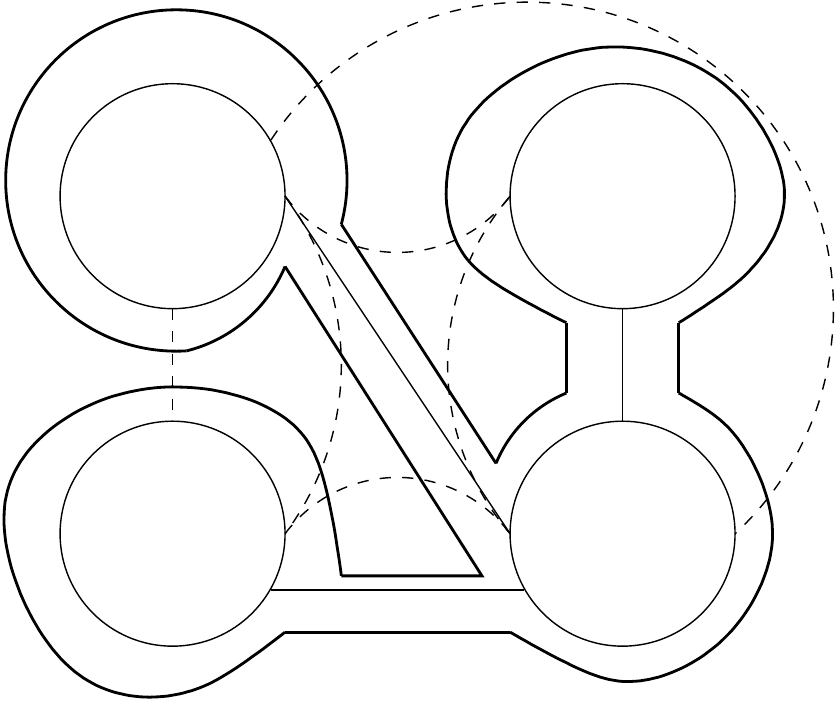_t}
\end{center}
\caption{A spanning hypertree of the hypermap shown in Fig.\ref{fig:hypermap}}
\label{fig:stree}
\end{figure}

\begin{example}
\label{e:stree}  
Consider the hypermap $(\sigma,\alpha)$ shown in
Fig.~\ref{fig:hypermap}. The permutation
$\theta=(1)(2,9)(3)(4,10)(5)(6)(7)(8,12)(11)$  is
a refinement of $\alpha$, and the hypermap
$(\sigma,\theta)$ is a spanning hypertree of the hypermap $(\sigma,\alpha)$.
 This spanning hypertree, together with its only face
 $\theta^{-1}\sigma=(1,9,4,5,6,10,7,12,11,8,2,3)$ is shown in
 Fig.~\ref{fig:stree}.  
\end{example}  
It should not confuse the reader that the spanning hypertree in
Example~\ref{e:stree} is also a tree. We include a second example of a
spanning hypertree: this example is not a tree, and the hypermap is also
nonplanar.  
\begin{example}
\label{e:robert}    
Consider the hypermap
$$(\sigma,\alpha)=((1,4,7)(2,5,8)(3,6,9),(1,2,3)(4,5,6)(7,8,9))$$
shown in  Fig.~\ref{fig:robert}. It has genus $1$, hence we draw it on a torus: 
the reader is supposed to identify pairs of points having the same first
or second coordinate on the boundary of the bounding box. The circles
represent the vertices, the hyperedges are bounded by dashed lines,
except for the cycle $(1,2,3)$. The permutation $\theta=(1,2,3)$ is a
refinement of $\alpha$ and $(\sigma,\theta)$ is a spanning
hypertree. The only face of $(\sigma,\theta)$ is
$\theta^{-1}\sigma=(1,4,7,3,6,8,2,5,8)$.  
\end{example}  
\begin{figure}[h]
\begin{center}
\input{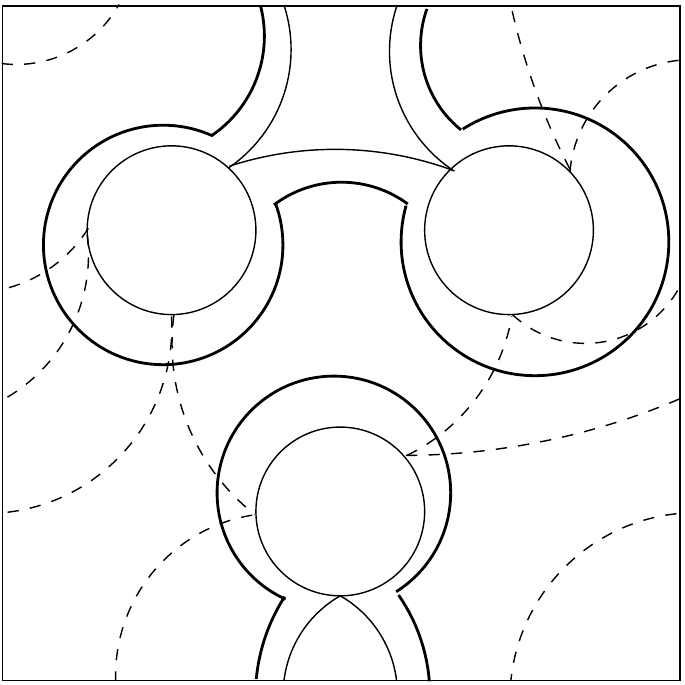_t}
\end{center}
\caption{A hypermap of genus $1$ and its spanning hypertree}
\label{fig:robert}
\end{figure}

We may use the cyclic order of $\theta^{-1}\sigma$ to obtain a special
representation of the spanning hypertree of a genus zero hypermap, which we
call the {\em one-line diagram $D(\sigma,\alpha,\theta)$}. We list the
labels in the 
order they appear in $\theta^{-1}\sigma$, starting with $1$, in the
left to right order, below a horizontal line. Above the horizontal line
we connect the elements that are adjacent in a cycle of
$\alpha^{-1}\sigma$, below the horizontal line we connect the labels
that are adjacent in a cycle of $\sigma$, and we shade the regions
representing the cycles of these permutations, as shown in
Fig.~\ref{fig:1line}. We make sure that the labels are recorded on the
left hand side of the points corresponding to them on the horizontal
line. 

The following statement is an obvious 
consequence of the definition of $C_{\sigma}(\sigma,\alpha^{-1}\sigma)$.
\begin{proposition}
  \label{prop:1line0}
Let $(\sigma,\alpha)$ be a genus zero hypermap on the set $\{1,2,\ldots,n\}$.  
The circular permutation of $\{1,2,\ldots,n\}$ is the only face of a
spanning hypertree of $(\sigma,\alpha)$ if and only if its one-line
diagram satisfies the following. 
\begin{enumerate}
\item The arcs above the horizontal line represent the parts of a noncrossing
  partition whose parts are the cycles of $\alpha^{-1}\sigma$, that is,
  the faces.
\item The arcs below the horizontal line represent the parts of a noncrossing
  partition whose parts are the cycles of $\sigma$, that is,
  the vertices.  
\end{enumerate}  
\end{proposition}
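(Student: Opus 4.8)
The plan is to show that the two conditions on the one-line diagram are a direct translation of the membership of $\zeta = \theta^{-1}\sigma$ in the set $C_{\sigma}(\sigma,\alpha^{-1}\sigma)$ from Corollary~\ref{cor:MCP}, specialized to genus zero. Recall that for a genus zero hypermap $g(\sigma,\alpha)=0$, and by Theorem~\ref{thm:dhtree} (or directly) any spanning unicellular hypermap $(\sigma,\theta)$ must also have genus zero; thus $\zeta$ is a spanning-hypertree face if and only if $z(\zeta)=1$, $g(\sigma,\zeta)=0$ and $g(\alpha^{-1}\sigma,\zeta)=0$. Since we are already listing the labels along the single cycle of $\zeta$, the condition $z(\zeta)=1$ is automatic, and the whole content is the pair of genus-zero conditions $g(\sigma,\zeta)=0$ and $g(\alpha^{-1}\sigma,\zeta)=0$.

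The key step is then to invoke the result of Cori~\cite[Theorem~1]{Cori} already quoted in Section~\ref{sec:prelim}: for a \emph{circular} permutation $\zeta$ and an arbitrary permutation $\pi$ acting on the same set, $g(\zeta,\pi)=0$ holds if and only if the cycles of $\pi$ form a noncrossing partition with respect to the circular order determined by $\zeta$. Applying this with $\pi=\sigma$ gives that $g(\zeta,\sigma)=0$ — equivalently $g(\sigma,\zeta)=0$, since the genus is symmetric in its two arguments by Equation~\eqref{eq:genusdef} together with $z(\zeta^{-1}\sigma)=z(\sigma^{-1}\zeta)$ — iff the cycles of $\sigma$, i.e.\ the vertices, are noncrossing in the circular order given by $\zeta$; and this is exactly condition~(2), since the one-line diagram draws the labels left to right in the order of $\zeta$ (with endpoints conceptually joined into a circle) and draws the $\sigma$-arcs below the line. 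Applying the same result with $\pi = \alpha^{-1}\sigma$ gives that $g(\alpha^{-1}\sigma,\zeta)=0$ iff the cycles of $\alpha^{-1}\sigma$, the faces, are noncrossing in the circular order of $\zeta$, which is condition~(1), with the face-arcs drawn above the line.

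The only mild subtlety — and the step I expect to require the most care in the writing — is the identification of "noncrossing with respect to the circular order of $\zeta$" with "noncrossing in the one-line diagram". The one-line diagram is a \emph{linear} drawing starting at $1$, so one must observe that, because both the $\sigma$-arcs and the $(\alpha^{-1}\sigma)$-arcs are drawn on \emph{opposite} sides of the horizontal line, two arcs on the same side cross in the linear picture exactly when the corresponding pairs interleave in the cyclic order — the choice to start at $1$ (a fixed label, since $1$ is a fixed point of $\zeta^{-1}\sigma$ in the sense that it begins the list) does not create or destroy crossings among arcs on a single side. Once this is noted, the Proposition follows immediately by combining Corollary~\ref{cor:MCP}, the genus symmetry, and Cori's characterization of genus-zero monopoles, exactly as the text signals by calling it "an obvious consequence."
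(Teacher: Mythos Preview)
Your proposal is correct and is exactly the approach the paper has in mind: the paper does not give a separate proof but simply states that the proposition ``is an obvious consequence of the definition of $C_{\sigma}(\sigma,\alpha^{-1}\sigma)$,'' and your argument unpacks that definition via Corollary~\ref{cor:MCP} and Cori's characterization of genus-zero monopoles as noncrossing partitions, just as intended. The only cosmetic issue is the parenthetical remark about $1$ being ``a fixed point of $\zeta^{-1}\sigma$,'' which is not quite right and is in any case irrelevant---the passage from circular to linear noncrossingness is a standard fact that holds for any choice of starting point.
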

For hypermaps of higher genus a similar characterization may be
formulated, which we will present in Section~\ref{sec:hyperdc}.
\begin{figure}[h]
%90%  
\begin{center}
\input{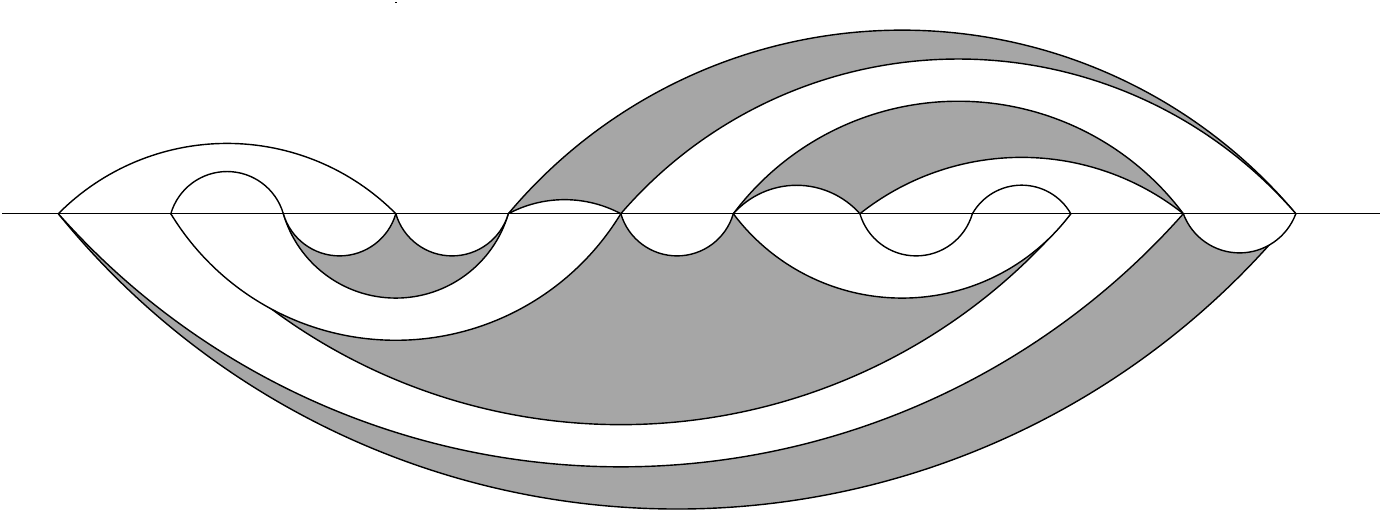_t}
\end{center}
\caption{One-line representation of the spanning hypertree shown in Fig.\ref{fig:stree}}
\label{fig:1line}
\end{figure}
It is worth noting that the regions between the shaded areas allow us to
read off the hyperedges of $(\sigma,\alpha)$ and $(\sigma,\theta)$
respectively. 
\begin{theorem}
Let $(\sigma,\alpha)$ be a hypermap of genus zero and $(\sigma,\theta)$
a spanning hypertree of $(\sigma,\alpha)$. Disregarding the horizontal
line, the unshaded regions between and around the regions representing
the cycles of $\sigma$ and $\sigma^{-1}\alpha$ in
$D(\sigma,\theta,\alpha)$ contain the labels of
the hyperedges of $(\sigma,\alpha)$. The parts of these regions below
the horizontal line are labeled with the hyperedges of $(\sigma,\theta)$.
\end{theorem}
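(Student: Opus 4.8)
The plan is to reduce the assertion to a computation in the lattice of noncrossing partitions that can be read directly off the one-line diagram. Since $(\sigma,\theta)$ is a spanning hypertree, $\gamma\df\theta^{-1}\sigma$ is a single $n$-cycle, so by Proposition~\ref{prop:1line0} the arcs below the line in $D(\sigma,\theta,\alpha)$ realize the noncrossing partition (with respect to the cyclic order $\gamma$) whose blocks are the cycles of $\sigma$, and the arcs above the line realize the noncrossing partition whose blocks are the cycles of $\alpha^{-1}\sigma$. I would first record the elementary fact that when a noncrossing partition $P$ of a cyclically ordered $n$-set is drawn inside a disk — each block as a small disk with legs running to the points it contains — the complementary regions, once we attach to each the labels on whose sides it lies, realize precisely the blocks of the Kreweras complement $K(P)$ (cf.\ Kreweras~\cite{Kreweras}). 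Applied to the $\sigma$-arcs, this identifies the regions of $D(\sigma,\theta,\alpha)$ lying below the line with the cycles of $\gamma\sigma^{-1}=\theta^{-1}$, that is, with the cycles of $\theta$, which are exactly the hyperedges of $(\sigma,\theta)$. This already settles the second assertion.

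Next I would run the same argument above the line: its regions realize the Kreweras complement of the face partition, namely the cycles of $\gamma(\alpha^{-1}\sigma)^{-1}=\theta^{-1}\alpha$. Disregarding the horizontal line glues each region below the line to the regions above the line that it meets across the intervals separating consecutive points, so the unshaded regions of $D(\sigma,\theta,\alpha)$ correspond to the blocks of the join, in the partition lattice, of the partition into cycles of $\theta^{-1}$ and the partition into cycles of $\theta^{-1}\alpha$. To evaluate this join I would use that $\theta$ is a refinement of $\alpha$: both $\theta$ and $\theta^{-1}\alpha$ preserve each cycle $A$ of $\alpha$ setwise, so the join is computed one $A$ at a time; on each $A$ the permutation $\theta|_A$ represents a noncrossing partition of the cyclic order $\alpha|_A$, and $(\theta^{-1}\alpha)|_A=(\theta|_A)^{-1}(\alpha|_A)$ is precisely its Kreweras complement. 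Since a noncrossing partition and its Kreweras complement always join to the one-block partition, the join restricted to $A$ is the one-block partition of $A$; hence globally the join has exactly the cycles of $\alpha$ as its blocks, so the unshaded regions are labelled by the hyperedges of $(\sigma,\alpha)$, each one visibly the union of the tree-hyperedges it contains.

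The steps I expect to require real care — and the only genuine obstacle — are the bookkeeping ones underlying the geometric statement ``complementary regions $=$ Kreweras complement'' for a two-sided diagram: pinning down the bijection between the $n$ intervals of the diagram and the $n$ labels (this is where the convention of writing labels to the left of their points matters, as does the choice between the two normalizations $\pi^{-1}\gamma$ and $\gamma\pi^{-1}$ of the Kreweras complement, which must be made the same way above and below the line), and checking that gluing along the intervals really does produce the join of the two complements rather than some coarser or finer partition. I would also note that the linear diagram has been cut open at the point carrying the label $1$, so that the region belonging to the cycle of $\alpha$ through $1$ appears as the unbounded region. Once all of this is in place, the identification of the join with the $\alpha$-partition is immediate from $P\vee K(P)=\widehat 1$ applied blockwise, and no further computation is needed.
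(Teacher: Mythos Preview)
Your argument is correct and considerably more complete than the paper's own proof, which is explicitly labeled a sketch. The paper disposes of the first assertion as ``a direct consequence of the definition'' and handles the second by the geometric remark that the cycle $\theta^{-1}\sigma$ is the boundary of the spanning hypertree drawn in the plane, so that on the tree side one sees vertices together with the hyperedges of $(\sigma,\theta)$, and on the other side faces together with the pieces of $\alpha$-hyperedges linking $\theta$-cycles into $\alpha$-cycles.

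You take a genuinely different route: instead of appealing to the planar picture of the hypertree, you compute everything inside the noncrossing-partition lattice on the single cycle $\gamma=\theta^{-1}\sigma$. Identifying the below-line regions with the Kreweras complement of $\sigma$ gives the $\theta$-cycles directly, and the first assertion is then obtained by taking the join of the two Kreweras complements and invoking $P\vee K(P)=\widehat 1$ blockwise on each $\alpha$-cycle. This is a cleaner and more checkable argument than the paper's; in particular it makes the first statement an honest computation rather than a tacit appeal to a planar embedding. What the paper's approach buys is intuition for why the diagram deserves its name: it really is the planar drawing of $(\sigma,\alpha)$ cut open along the face tour. Your self-identified caveats about the left-of-point labeling convention and the normalization $\gamma\pi^{-1}$ versus $\pi^{-1}\gamma$ are exactly the points to pin down; once they are fixed consistently on both sides of the line the rest of the argument goes through without change.
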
  
\begin{proof} (Sketch.)
The first statement is a direct consequence of the definition. To prove
the second statement, observe that going around the spanning hypertree,
on one side we have the vertices and the hyperedges of $(\sigma,\theta)$
and the vertices and on the other side we have the faces and parts of
the hyperedges of $(\sigma,\alpha)$ which connect the cycles of $\theta$
into cycles of $\alpha$
\end{proof}  

\section{Hyperdeletions, hypercontractions and compatible pairs of
  tours}
\label{sec:hyperdc}

Recall that a transposition $\tau=(i,j)$ {\em connects} a permutation $\pi$
if $i$ and $j$ belong to different cycles of $\pi$, otherwise we say that
it {\em disconnects} $\pi$. The reason behind this terminology is {\em
  Serret's lemma}, according to which for a transposition $\tau$ connecting
$\pi$ we have $z(\tau\pi)=z(\pi\tau)=z(\pi)-1$ and for a transposition
$\tau$ disconnecting $\pi$ we have $z(\tau\pi)=z(\pi\tau)=z(\pi)+1$. For
example, when $\tau$ disconnects $\pi$, the cycles of $\pi\tau$ and the
cycles of $\tau\pi$ are obtained from the cycles of $\pi$ by breaking
the single cycle containing both $i$ and $j$ into two cycles, one
containing $i$, one containing $j$. Keeping
this in mind, we make the following definition.

\begin{definition}
A {\em hyperdeletion} is the operation of replacing a hypermap
$(\sigma,\alpha)$ with the hypermap 
$(\sigma,\alpha\delta)$ where $\delta=(i,j)$ is a transposition
disconnecting $\alpha$. We call the hyperdeletion {\em topological} if
$\delta$ also connects $\alpha^{-1}\sigma$, that is,
$z(\delta\alpha^{-1}\sigma)=z(\alpha^{-1}\sigma)-1$. In short, we will
say that {\em $\delta$ is a hyperdeletion for $(\sigma,\alpha)$}  if
the operation $(\sigma,\alpha)\mapsto (\sigma,\alpha\delta)$ is a
hyperdeletion. 
\end{definition}
It is part of the definition of a hyperdeletion that
$(\sigma,\alpha\delta)$ must still be a hypermap. We are simply not
allowed to replace $\alpha$ with $\alpha\delta$ if $\sigma$ and
$\alpha\delta$ do not generate a transitive group. In that case we will
say that $\delta$ is an {\em isthmus}. Note that this definition
coincides with the usual definition if $(\sigma,\alpha)$ is {\em a map}
in which each cycle of $\alpha$ has at most two elements, equivalently,
$\alpha=\alpha^{-1}$ holds. In that case,
disconnecting a cycle of $\alpha$, while keeping $\sigma$ unchanged,
corresponds to deleting an edge. For topological deletions, the fact
that $z(\delta\alpha^{-1}\sigma)=z(\alpha^{-1}\sigma)-1$ automatically 
guarantees that $(\sigma,\alpha\delta)$ is also a hypermap, this was
already noted in~\cite{Cori-Machi} and \cite{Machi}. Note that, for all
hyperdeletions, the faces of
$(\sigma,\alpha\delta)$ are the cycles of $\delta\alpha^{-1}\sigma$. 
By Serret's lemma, for topological hyperdeletions we have  
$z(\alpha\delta)=z(\alpha)+1$ and
$z(\alpha^{-1}\sigma)=z(\delta\alpha^{-1}\sigma)+1$, hence the hypermap
$(\sigma,\alpha\delta)$ has the same genus as $(\sigma,\alpha)$. If the
hyperdeletion is not topological then we have $z(\alpha\delta)=z(\alpha)+1$ and
$z(\alpha^{-1}\sigma)=z(\delta\alpha^{-1}\sigma)-1$, and we have
$g(\sigma,\alpha\delta)=g(\sigma,\alpha)-1$. As a consequence,
non-topological hyperdeletions exist only for hypermaps of positive
genus, and they decrease the genus of the hypermap by one.

Dually, we define {\em hypercontractions} as follows.
\begin{definition}
A {\em hypercontraction} is the operation of replacing a hypermap
$(\sigma,\alpha)$ with the hypermap $(\gamma\sigma,\gamma\alpha)$ where
$\gamma=(i,j)$ is a transposition disconnecting $\alpha$. We call a
hypercontraction {\em topological} if it also connects $\sigma$, that
is, we have $z(\gamma\sigma)=z(\sigma)-1$. In short, we will
say that {\em $\gamma$ is a hypercontraction for $(\sigma,\alpha)$}  if
the operation $(\sigma,\alpha)\mapsto (\gamma\sigma,\gamma\alpha)$ is a
hypercontraction.  
\end{definition}
Once again, we require $(\gamma\sigma,\gamma\alpha)$ to be a hypermap,
and we do not allow the operation $(\sigma,\alpha)\mapsto
(\gamma\sigma,\gamma\alpha)$ if $\gamma\sigma$ and $\gamma\alpha$
generate a non-transitive subgroup.

The description of non-topological hypercontractions is intuitively less
obvious, even for maps. Clearly $\gamma=(i,j)$ is a topological
hypercontraction if an only if $i$ and $j$ belong to different cycles of
$\sigma$. If $i$ and $j$ belong to the same cycle of $\sigma$ in a map
$(\sigma,\alpha)$ then $(i,j)$ is a loop and
$(\gamma\sigma,\gamma\alpha)$ is obtained by deleting this loop and
{\em splitting} the vertex incident to this loop into two vertices.
\begin{definition}
\label{def:split}
Given an undirected multigraph $G$, we define a {\em vertex splitting}
as an operation that replaces a vertex $v$ of $G$ with two vertices
$v_1$ and $v_2$, and that replaces each edge $e$ connecting some vertex
$u$ with $v$ as follows: if $u\neq v$ then $e$ is replaced with an edge
connecting $u$ with $v_1$ or $v_2$, if $u=v$ then the loop edge incident
to $v$ is replaced with a loop edge incident to $v_1$ or $v_2$ or by an edge
connecting $v_1$ with $v_2$. If the graph $H$ is obtained from $G$ by
vertex splitting, we say that $G$ is obtained from $H$ by {\em vertex
  merging}. For a hypermap $(\sigma,\alpha)$, a {\em topological vertex
  splitting} is a map $(\sigma,\alpha)\mapsto ((i,j)\sigma,(i,j)\alpha)$
where $i$ and $j$ belong to the same cycle of $\sigma$.    
\end{definition}
\begin{remark}  
Various definitions of a vertex splitting
exist in the literature of graph theory, and the one given in
Definition~\ref{def:split} above does not seem to be the most frequently
used one. That said, this definition of a vertex splitting is used for
example in~\cite{Eades-dM}. 
\end{remark}
Note that we allow the use of a non-topological contraction only if the
resulting graph is not disconnected. For example for $\gamma=(1,2)$ and
the monopole $(\sigma,\alpha)=((1,2,3,4),(1,2)(3,4))$ the pair of
permutations $(\gamma\sigma,\gamma\alpha)=((1)(2,3,4), (1)(2)(3,4))$ is
not a map: it has two isolated vertices, one of them is incident to the
loop $(3,4)$. On the other hand, for $\gamma=(1,3)$ and the monopole
$(\sigma,\alpha)=((1,2,3,4),(1,3)(2,4))$ the pair of permutations
$(\gamma\sigma,\gamma\alpha)=((1,2)(3,4),(1)(2,4)(3)$ is a map: we
obtain two vertices connected by the edge $(2,4)$.  

A topological hypercontraction does not disconnect the hypermap, nor
does it change the genus, whereas a non-topological hypercontraction
may disconnect a hypermap, and if it does not then it decreases the
genus by one. Hyperdeletions and hypercontractions are 
dual to each other in the following sense.

\begin{lemma}
\label{lem:deletion_contraction}  
Given a hypermap $(\sigma,\alpha)$ and a transposition $\tau$
disconnecting $\alpha$, the operation  
$(\sigma,\alpha)\mapsto (\sigma,\alpha\tau)$ is a hyperdeletion if and
only if the operation 
$(\alpha^{-1}\sigma,\alpha^{-1})\mapsto
(\tau\alpha^{-1}\sigma,\tau\alpha^{-1})$ is a hypercontraction.
If one of these operations is topological then so is the other one. 
\end{lemma}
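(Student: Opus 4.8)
The plan is to unwind both definitions and check that the defining conditions match up term by term, using Serret's lemma as the bridge. Write $\tau=(i,j)$. By hypothesis $\tau$ disconnects $\alpha$, which by Serret's lemma says $z(\alpha\tau)=z(\alpha)+1$; I first record that the same transposition $\tau$ disconnects $\alpha^{-1}$ as well, since $i$ and $j$ lie in the same cycle of $\alpha$ if and only if they lie in the same cycle of $\alpha^{-1}$ (a permutation and its inverse have the same cycle partition). Hence $\tau$ is a legitimate candidate for a hyperdeletion of $(\sigma,\alpha)$ exactly when it is a legitimate candidate for a hypercontraction of the dual $(\alpha^{-1}\sigma,\alpha^{-1})$, at least as far as the "disconnects the hyperedge permutation" requirement is concerned.

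Next I would compare the two notions of being topological. The hyperdeletion $(\sigma,\alpha)\mapsto(\sigma,\alpha\tau)$ is topological precisely when $\tau$ connects the face permutation $\alpha^{-1}\sigma$, i.e. $z(\tau\alpha^{-1}\sigma)=z(\alpha^{-1}\sigma)-1$. On the other side, the hypercontraction $(\alpha^{-1}\sigma,\alpha^{-1})\mapsto(\tau\alpha^{-1}\sigma,\tau\alpha^{-1})$ is topological precisely when $\tau$ connects the vertex permutation of the dual hypermap, which is $\alpha^{-1}\sigma$ — the very same permutation. So "the hyperdeletion is topological" and "the dual hypercontraction is topological" are literally the same condition $z(\tau\alpha^{-1}\sigma)=z(\alpha^{-1}\sigma)-1$; this gives the last sentence of the lemma for free once the equivalence of the two operations themselves is established.

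The remaining point — and the one place where a little care is needed — is the transitivity (hypermap) requirement hidden inside each definition: a hyperdeletion is only allowed when $(\sigma,\alpha\tau)$ is again a hypermap, and a hypercontraction only when $(\tau\alpha^{-1}\sigma,\tau\alpha^{-1})$ is again a hypermap. I would argue that the subgroup generated by $\sigma$ and $\alpha\tau$ equals the subgroup generated by $\alpha^{-1}\sigma$ and $\alpha^{-1}$, since from $\{\sigma,\alpha\tau\}$ one recovers $\alpha^{-1}\sigma$ and $\alpha^{-1}$, and conversely — wait, one does not immediately recover $\alpha$ itself, so I must be more careful. The honest route: the group $\langle\sigma,\alpha\rangle$ is transitive by assumption, and conjugation/closure arguments show $\langle\sigma,\alpha\tau\rangle=\langle\alpha^{-1}\sigma,\tau\alpha^{-1}\rangle\cdot\langle\ldots\rangle$; cleanest is to observe $\langle\sigma,\alpha\tau\rangle$ contains $\sigma$ and $(\alpha\tau)^{-1}=\tau\alpha^{-1}$, hence contains $\tau\alpha^{-1}\sigma$, so it contains the group $\langle\alpha^{-1}\sigma,\tau\alpha^{-1}\rangle$; conversely the latter group contains $\tau\alpha^{-1}$ and $\alpha^{-1}\sigma$, hence $\sigma=\alpha\cdot\alpha^{-1}\sigma$... which again needs $\alpha$. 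The fix is to note both generated groups act on the same underlying set and contain the transposition $\tau$ together with $\alpha^{-1}$ (from $\tau\cdot\tau\alpha^{-1}$), hence contain $\alpha$, hence are both equal to $\langle\sigma,\alpha,\tau\rangle$; so transitivity of one is equivalent to transitivity of the other. I expect this transitivity bookkeeping to be the main (mild) obstacle; once it is settled, the face-count and genus statements follow immediately from Serret's lemma and Equation~\eqref{eq:genusdef}, exactly as in the discussion preceding the lemma.
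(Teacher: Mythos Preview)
The paper leaves this verification to the reader, and your approach --- checking that ``$\tau$ disconnects $\alpha$'' matches ``$\tau$ disconnects $\alpha^{-1}$'', that the topological conditions coincide because both reduce to ``$\tau$ connects $\alpha^{-1}\sigma$'', and that the two transitivity requirements agree --- is exactly the intended one. The first two parts are clean and correct.

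Your transitivity argument, however, breaks down at the end. The ``fix'' asserting that both generated groups contain $\tau$ is not justified and is false in general: for instance if $\alpha=\tau$ then $\alpha\tau$ is the identity and $\langle\sigma,\alpha\tau\rangle=\langle\sigma\rangle$ need not contain $\tau$ at all. The clean route is the one you almost finished before abandoning it. From $\langle\sigma,\alpha\tau\rangle$ you correctly extracted $(\alpha\tau)^{-1}=\tau\alpha^{-1}$ and then $\tau\alpha^{-1}\sigma$, so $\langle\tau\alpha^{-1}\sigma,\tau\alpha^{-1}\rangle\subseteq\langle\sigma,\alpha\tau\rangle$ (your displayed inclusion has a typo: the first generator should be $\tau\alpha^{-1}\sigma$, not $\alpha^{-1}\sigma$, and this typo is exactly what derailed you afterwards). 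For the reverse containment simply observe that $\langle\tau\alpha^{-1}\sigma,\tau\alpha^{-1}\rangle$ contains $(\tau\alpha^{-1})^{-1}=\alpha\tau$ and $(\tau\alpha^{-1})^{-1}\cdot(\tau\alpha^{-1}\sigma)=\sigma$, so it contains $\langle\sigma,\alpha\tau\rangle$. Hence the two groups are equal and one is transitive if and only if the other is.
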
  
The straightforward verification is left to the reader. It is worth
memorizing that hyperdeletions keep the vertices unchanged and change
the number of faces, whereas hypercontractions keep faces unchanged and
change the number of vertices.  In analogy to~\cite[Lemma 3]{Machi} we
may note the following.
\begin{lemma} If $\delta$ is a deletion ($\gamma$ is a contraction) for
  the hypermap $(\sigma,\alpha)$ then the permutation $\alpha\delta$
  (the permutation $\gamma\alpha$) is a refinement of $\alpha$. 
\end{lemma}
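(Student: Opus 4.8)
The plan is to verify, for a hyperdeletion transposition $\delta=(i,j)$ for $(\sigma,\alpha)$, that $\theta:=\alpha\delta$ satisfies the two conditions characterizing refinements in Proposition~\ref{prop:refinement} with $\gamma:=\alpha$; the hypercontraction case then follows by the same computation, or more slickly by applying the hyperdeletion case to $\alpha^{-1}$ and invoking Corollary~\ref{cor:refine2}. Note in advance that only the hypothesis ``$\delta$ disconnects $\alpha$'', i.e.\ that $i$ and $j$ lie in a common cycle of $\alpha$, will be used; the transitivity requirement built into the definition of a hyperdeletion plays no role in being a refinement.

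For condition (1) I would first observe that for every $k$ the element $\delta(k)$ lies in the same cycle of $\alpha$ as $k$: this is immediate when $\delta(k)=k$, and when $\{k,\delta(k)\}=\{i,j\}$ it is precisely the disconnecting hypothesis. Applying $\alpha$ then keeps us inside that cycle, so $k$ and $(\alpha\delta)(k)=\alpha(\delta(k))$ always lie in a common cycle of $\alpha$, which is condition (1).

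For condition (2), namely $z(\theta^{-1}\alpha)+z(\theta)=n+z(\alpha)$, the key identity is $\theta^{-1}\alpha=\delta^{-1}\alpha^{-1}\alpha=\delta$, so $z(\theta^{-1}\alpha)=z(\delta)=n-1$, since a transposition on $n$ points has exactly $n-1$ cycles. Meanwhile Serret's lemma gives $z(\theta)=z(\alpha\delta)=z(\alpha)+1$, because $\delta$ disconnects $\alpha$. Adding these yields $n+z(\alpha)$ exactly, so Proposition~\ref{prop:refinement} applies and $\alpha\delta$ is a refinement of $\alpha$. For a hypercontraction $\gamma=(i,j)$ one repeats this with $\theta:=\gamma\alpha$: now $\theta^{-1}\alpha=\alpha^{-1}\gamma^{-1}\alpha=\alpha^{-1}\gamma\alpha$ is a conjugate of the transposition $\gamma$, hence again has $n-1$ cycles, while $z(\gamma\alpha)=z(\alpha)+1$ since $\gamma$ disconnects $\alpha$, and condition (1) is checked in the same way from $(\gamma\alpha)(k)=\gamma(\alpha(k))$.

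There is no genuine obstacle here; the only points requiring care are using the correct cycle count $z(\delta)=n-1$ for a transposition (rather than $n$), and remembering that $\delta$ disconnects $\alpha$ exactly when it disconnects $\alpha^{-1}$, which is the symmetry making the hyperdeletion and hypercontraction statements mirror each other.
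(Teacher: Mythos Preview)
Your proof is correct. The paper itself states this lemma without proof, merely noting it ``in analogy to~\cite[Lemma 3]{Machi}'', so there is no argument to compare against; your verification via the two conditions of Proposition~\ref{prop:refinement} together with Serret's lemma is exactly the kind of direct check one would expect.
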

To allow repeated use of hyperdeletions and hypercontractions, we make the
following definition.
\begin{definition}
Given a hypermap $(\sigma,\alpha)$ we call a sequence of transpositions
$\delta_1,\ldots,\delta_k$ a {\em sequence of hyperdeletions for
  $(\sigma,\alpha)$} if $\delta_1$ is a hyperdeletion for
  $(\sigma,\alpha)$ and for each $i$ satisfying $1<i\leq k$ the
  transposition $\delta_i$ is a hyperdeletion for
  $(\sigma,\alpha\delta_1\cdots \delta_{i-1})$. Dually, we call a
  sequence of transpositions $\gamma_1,\ldots,\gamma_k$ a {\em sequence
    of hypercontractions for $(\sigma,\alpha)$} if $\gamma_1$ is a
    hypercontraction for $(\sigma,\alpha)$ and for each $i$ satisfying
    $1<i\leq k$ the transposition $\gamma_i$ is a hypercontraction for
  $(\gamma_{i-1}\cdots \gamma_1\sigma,\gamma_{i-1}\cdots \gamma_1\alpha)$.   
\end{definition}  
In analogy to~\cite[Lemma 1]{Machi}, one may note that
we can always find a topological hyperdeletion $\delta$ decreasing the number of
faces of a hypermap, as long as $(\sigma,\alpha)$ is not
unicellular. Indeed, if all transpositions connecting
$\alpha^{-1}\sigma$ also connect $\alpha$ then the cycles of 
$\alpha$ are contained in the cycles of $\alpha^{-1}\sigma$, the
permutations $\alpha$ and $\alpha^{-1}\sigma$ do not generate a
transitive permutation group, the same holds for $\alpha$ and
$\sigma$, in contradiction with $(\sigma,\alpha)$ being a hypermap. Thus
we obtain the following statement.
\begin{lemma}
\label{lem:deletions}  
For any hypermap $(\sigma,\alpha)$ with more than one face, there is a
sequence of topological hyperdeletions
$\delta_1,\ldots,\delta_{z(\alpha^{-1}\sigma)-1}$ of length
$z(\alpha^{-1}\sigma)-1$. For any sequence of hyperdeletions of such
length, the hypermap
$(\sigma,\alpha\delta_1\cdots\delta_{z(\alpha^{-1}\sigma)-1})$ is a
spanning unicellular hypermap of $(\sigma,\alpha)$ of genus
$g(\sigma,\alpha)$.     
\end{lemma}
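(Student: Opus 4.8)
The plan is to prove the lemma in two stages: existence of a long sequence of topological hyperdeletions, and the fact that any such sequence produces a spanning unicellular hypermap of the correct genus. For the first stage, I would argue by induction on $z(\alpha^{-1}\sigma)$, using the observation recorded just before the statement: as long as $(\sigma,\alpha)$ is not unicellular, there is a transposition connecting $\alpha^{-1}\sigma$ that also disconnects $\alpha$ (otherwise the cycles of $\alpha$ would be contained in the cycles of $\alpha^{-1}\sigma$, violating transitivity of the group generated by $\sigma$ and $\alpha$). One must check that such a transposition $\delta$ is genuinely a hyperdeletion, i.e.\ that $(\sigma,\alpha\delta)$ is still a hypermap; this is exactly the remark earlier in the section that for topological deletions the condition $z(\delta\alpha^{-1}\sigma)=z(\alpha^{-1}\sigma)-1$ automatically guarantees transitivity of the group generated by $\sigma$ and $\alpha\delta$ (citing~\cite{Cori-Machi,Machi}). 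Having chosen $\delta_1$, the hypermap $(\sigma,\alpha\delta_1)$ has one fewer face and the same genus (Serret's lemma gives $z(\alpha\delta_1)=z(\alpha)+1$ and $z(\delta_1\alpha^{-1}\sigma)=z(\alpha^{-1}\sigma)-1$, so the two changes cancel in~\eqref{eq:genusdef}), and the induction hypothesis supplies $\delta_2,\ldots,\delta_{z(\alpha^{-1}\sigma)-1}$.

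For the second stage, let $\delta_1,\ldots,\delta_{z(\alpha^{-1}\sigma)-1}$ be \emph{any} sequence of hyperdeletions of that length, and write $\theta=\alpha\delta_1\cdots\delta_{z(\alpha^{-1}\sigma)-1}$. First I would show each hyperdeletion in the sequence must in fact be topological: since each $\delta_i$ disconnects the current hyperedge permutation, Serret's lemma forces $z$ of that permutation to increase by one at each step, so after $z(\alpha^{-1}\sigma)-1$ steps the hyperedge permutation has $z(\alpha)+z(\alpha^{-1}\sigma)-1$ cycles; on the other hand the face permutation is $\delta_{z(\alpha^{-1}\sigma)-1}\cdots\delta_1\,\alpha^{-1}\sigma$, and each factor can change $z$ by at most one, so the number of faces is at least $z(\alpha^{-1}\sigma)-\bigl(z(\alpha^{-1}\sigma)-1\bigr)=1$. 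Since the genus of the resulting pair is nonnegative,~\eqref{eq:genusdef} applied with $\sigma$ unchanged, $z(\theta)=z(\alpha)+z(\alpha^{-1}\sigma)-1$, and $z(\theta^{-1}\sigma)\ge 1$ forces $z(\theta^{-1}\sigma)=1$ (a unicellular hypermap) and pins down the genus to be exactly $g(\sigma,\alpha)$ — the arithmetic: $n+2-2g=z(\sigma)+\bigl(z(\alpha)+z(\alpha^{-1}\sigma)-1\bigr)+1=z(\sigma)+z(\alpha)+z(\alpha^{-1}\sigma)$, which is the same value as for $(\sigma,\alpha)$. This also retroactively shows every intermediate step was topological.

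Finally I would verify that $(\sigma,\theta)$ \emph{spans} $(\sigma,\alpha)$, i.e.\ that $\theta$ is a refinement of $\alpha$. For this I invoke Proposition~\ref{prop:refinement}: condition (1) holds because multiplying $\alpha$ on the right by a transposition disconnecting the current permutation only ever splits cycles, so each cycle of $\theta$ is contained in a cycle of $\alpha$ (this is also recorded in the ``In analogy to~\cite[Lemma 3]{Machi}'' lemma just above); condition (2), $z(\theta^{-1}\sigma)+z(\theta)=n+z(\alpha)$, is precisely the cycle count established in the previous paragraph ($z(\theta^{-1}\sigma)=1$, $z(\theta)=z(\alpha)+z(\alpha^{-1}\sigma)-1$, and $1+z(\alpha)+z(\alpha^{-1}\sigma)-1=n+z(\alpha)$ because $z(\sigma)+z(\alpha)+z(\alpha^{-1}\sigma)=n+2$ forces $z(\alpha^{-1}\sigma)=n+2-z(\sigma)-z(\alpha)$... wait, that uses $g(\sigma,\alpha)=0$, so instead I substitute $z(\alpha^{-1}\sigma)=n+2-2g(\sigma,\alpha)-z(\sigma)-z(\alpha)$ and combine with the genus-preservation computed above). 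The main obstacle I anticipate is the bookkeeping around \emph{non-topological} steps: a priori the definition of ``sequence of hyperdeletions'' only requires each $\delta_i$ to disconnect the current hyperedge permutation, not to be topological, so the crux is the counting argument that rules out non-topological steps in a maximal-length sequence — everything else is then a routine consequence of Serret's lemma and Proposition~\ref{prop:refinement}.
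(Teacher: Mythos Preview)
Your existence argument (first stage) is fine and matches the paper exactly: the paragraph immediately preceding the lemma \emph{is} the paper's proof of existence, and you have reproduced it.

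The second stage, however, rests on a misreading of the statement. The phrase ``any sequence of hyperdeletions of such length'' refers back to the \emph{topological} hyperdeletions of the first sentence; Proposition~\ref{prop:deletions} just below makes this explicit (``applying any sequence of topological hyperdeletions of length $z(\alpha^{-1}\sigma)-1$ \ldots''). Under that reading the conclusion is immediate: each topological hyperdeletion lowers the face count by one and preserves the genus, so after $z(\alpha^{-1}\sigma)-1$ steps you have a single face and genus $g(\sigma,\alpha)$; the refinement property follows from the lemma ``in analogy to~\cite[Lemma 3]{Machi}'' applied at each step (refinement is transitive).

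Your attempt to prove the stronger claim---that \emph{every} length-$(z(\alpha^{-1}\sigma)-1)$ sequence of hyperdeletions is automatically topological---contains a genuine error, and the stronger claim is in fact false. From $z(\theta)=z(\alpha)+z(\alpha^{-1}\sigma)-1$ and~\eqref{eq:genusdef} you only get
\[
z(\theta^{-1}\sigma)=1+2g(\sigma,\alpha)-2g(\sigma,\theta),
\]
so nonnegativity of $g(\sigma,\theta)$ yields $z(\theta^{-1}\sigma)\le 1+2g(\sigma,\alpha)$, not $z(\theta^{-1}\sigma)=1$. Concretely, take the genus-$1$ map of Figure~\ref{fig:bernardi}: it has two faces, so $z(\alpha^{-1}\sigma)-1=1$, yet $\delta=(3,9)$ is a \emph{non}-topological hyperdeletion (both $3$ and $9$ lie in the face $(2,6,11,3,12,7,9)$), and $(\sigma,\alpha\delta)$ is a genus-$0$ hypermap with three faces---not unicellular. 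There is also a slip in your refinement check: condition~(2) of Proposition~\ref{prop:refinement} for ``$\theta$ refines $\alpha$'' reads $z(\theta^{-1}\alpha)+z(\theta)=n+z(\alpha)$, not $z(\theta^{-1}\sigma)+z(\theta)=n+z(\alpha)$.
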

 The following strengthening of Lemma~\ref{lem:deletions} is implicit in
the proof of~\cite[Theorem 1]{Machi}.
\begin{proposition}
\label{prop:deletions}  
Given a hypermap $(\sigma,\alpha)$ of genus $g$ and a spanning 
genus $g$ unicellular hypermap $(\sigma,\theta)$ of it, there is a sequence of 
topological hyperdeletions of length $z(\alpha^{-1}\sigma)-1$ taking
$(\sigma,\alpha)$ into $(\sigma,\theta)$. Conversely, applying any
sequence of topological hyperdeletions of length $z(\alpha^{-1}\sigma)-1$ to
$(\sigma,\alpha)$ yields a spanning genus $g$ unicellular hypermap of
$(\sigma,\alpha)$.  
\end{proposition}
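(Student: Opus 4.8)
The plan is to prove the two assertions separately, giving a self-contained argument (the paper notes that the claim is implicit in the proof of~\cite[Theorem~1]{Machi}). For the converse direction I would argue directly, as a slight strengthening of Lemma~\ref{lem:deletions}: a topological hyperdeletion preserves the genus, decreases the number of faces by exactly one by Serret's lemma, and automatically produces a hypermap (as recorded right after the definition of a hyperdeletion). So after $k:=z(\alpha^{-1}\sigma)-1$ of them we reach a hypermap of genus $g$ whose face permutation has $z(\alpha^{-1}\sigma)-k=1$ cycle, i.e.\ a unicellular hypermap of genus $g$. Moreover each $\alpha\delta_1\cdots\delta_i$ is a refinement of $\alpha\delta_1\cdots\delta_{i-1}$ by the lemma preceding the definition of a sequence of hyperdeletions, and since the refinement relation is transitive (on each cycle of $\alpha$ it coincides with the Kreweras refinement order on noncrossing partitions~\cite{Kreweras}), $\alpha\delta_1\cdots\delta_k$ refines $\alpha$; hence the resulting hypermap spans $(\sigma,\alpha)$.

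For the forward direction I would begin by recording the arithmetic that makes the lengths match. Equating the two expressions for the genus furnished by~\eqref{eq:genusdef}, namely $n+2-2g=z(\sigma)+z(\alpha)+z(\alpha^{-1}\sigma)$ for $(\sigma,\alpha)$ and $n+2-2g=z(\sigma)+z(\theta)+1$ for the unicellular $(\sigma,\theta)$, yields $z(\theta)-z(\alpha)=z(\alpha^{-1}\sigma)-1=k$. Next I would use the structure of refinements described in Section~\ref{sec:prelim}: the set of permutations refining $\alpha$, ordered by further refinement, is the direct product over the cycles $C$ of $\alpha$ of the noncrossing partition lattices of those cycles; this poset is graded with rank function $\theta'\mapsto n-z(\theta')$, its top element is $\alpha$, and $\theta$ lies below $\alpha$ in it. Hence there is a saturated chain $\alpha=\theta^{(0)}\gtrdot\theta^{(1)}\gtrdot\cdots\gtrdot\theta^{(k)}=\theta$ of length exactly $k$. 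Each covering step $\theta^{(i)}\lessdot\theta^{(i-1)}$ splits one cycle of $\theta^{(i-1)}$ into two cyclically consecutive arcs, which in permutation terms is right multiplication by a single transposition $\delta_i$ disconnecting $\theta^{(i-1)}$; thus $\theta^{(i)}=\alpha\delta_1\cdots\delta_i$, and in particular $\alpha\delta_1\cdots\delta_k=\theta$.

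The crux is then to show that every $\delta_i$ is moreover \emph{topological}, i.e.\ connects the face permutation $(\theta^{(i-1)})^{-1}\sigma$; this is a short monotonicity observation. Put $\phi^{(i)}:=(\theta^{(i)})^{-1}\sigma=\delta_i\cdots\delta_1\,\alpha^{-1}\sigma$, so that $\phi^{(i)}=\delta_i\phi^{(i-1)}$ and hence $z(\phi^{(i)})-z(\phi^{(i-1)})=\pm1$ at every step, by Serret's lemma. Since $z(\phi^{(0)})=z(\alpha^{-1}\sigma)=k+1$ while $z(\phi^{(k)})=z(\theta^{-1}\sigma)=1$, the $k$ increments sum to $-k$ with each equal to $\pm1$, so every one of them equals $-1$; that is, $\delta_i$ connects $\phi^{(i-1)}$ for all $i$. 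Therefore each $\delta_i$ is a topological hyperdeletion for $(\sigma,\theta^{(i-1)})=(\sigma,\alpha\delta_1\cdots\delta_{i-1})$ (and $(\sigma,\theta^{(i)})$ is automatically a hypermap, as noted for topological hyperdeletions), the sequence $\delta_1,\dots,\delta_k$ is a sequence of topological hyperdeletions for $(\sigma,\alpha)$, and it carries $(\sigma,\alpha)$ to $(\sigma,\theta)$, as required.

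I expect the only delicate points to be the structural facts invoked in the middle paragraph — that refinements of $\alpha$ form a product of noncrossing partition lattices, that this product is graded with the indicated rank, and that its cover relations are realized by transpositions splitting a cycle into two arcs — all of which are classical~\cite{Kreweras} but should be spelled out; the genuine content of the proof is the one-line counting argument of the last paragraph. It is worth noting that the genus-$g$ hypothesis on $(\sigma,\theta)$, rather than mere unicellularity, enters precisely through the identity $z(\theta)-z(\alpha)=k$, and that the base case $k=0$ is immediate: then $z(\theta)=z(\alpha)$, so $\theta=\alpha$ since $\theta$ refines $\alpha$, and the empty sequence does the job.
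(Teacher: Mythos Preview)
Your proof is correct and follows essentially the same route as the paper: both arguments descend from $\alpha$ to $\theta$ along a saturated chain in the product of noncrossing partition lattices, realizing each cover relation as right multiplication by a transposition. The paper's sketch stops at asserting that each such step is a topological hyperdeletion, deferring the justification to Mach\`\i; your $\pm1$ counting argument on $z(\phi^{(i)})$ makes this step explicit and self-contained, which is a genuine improvement in exposition even though the underlying idea is the same.
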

The key idea behind proving Proposition~\ref{prop:deletions} is that
refining $\alpha$ to $\theta$ amounts to replacing each cycle of
$\alpha$ with a noncrossing partition with respect to its cyclic order,
and noncrossing partitions are intersections of the coatoms in the
noncrossing partition lattice. Each coatom had exactly two parts,
replacing a single cycle with two noncrossing cycles amounts to applying
a topological hyperdeletion.

Applying Proposition~\ref{prop:deletions} to the dual hypermap we obtain
the following.
\begin{corollary}
\label{cor:dual_deletions}  
Given a hypermap $(\sigma,\alpha)$ of genus $g$, the set of all spanning
genus genus $g$ unicellular hypermaps of its dual hypermap 
$(\alpha^{-1}\sigma,\alpha^{-1})$ are exactly the hypermaps obtained by
by applying a sequence of topological hyperdeletions  $\gamma_{1}, \gamma_{2},
\cdots, \gamma_{z(\sigma)-1}$ to $(\alpha^{-1}\sigma,\alpha^{-1})$.
\end{corollary}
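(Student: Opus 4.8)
The plan is to obtain the statement directly from Proposition~\ref{prop:deletions}, applied not to $(\sigma,\alpha)$ but to its dual $(\alpha^{-1}\sigma,\alpha^{-1})$. To make that application we need only two facts about the dual: its genus, and the number of cycles of its face permutation, since Proposition~\ref{prop:deletions} produces sequences of topological hyperdeletions whose length is (number of faces) minus one, and characterizes exactly the spanning unicellular hypermaps of the same genus as what such sequences reach.

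First I would compute the face permutation of the dual, which is $(\alpha^{-1})^{-1}\alpha^{-1}\sigma = \sigma$; thus the dual has $z(\sigma)$ faces. The genus of the dual is then seen to equal $g$ from Equation~\eqref{eq:genusdef}: using $z(\alpha^{-1}) = z(\alpha)$ we get
$$
n+2-2g(\alpha^{-1}\sigma,\alpha^{-1}) = z(\alpha^{-1}\sigma) + z(\alpha) + z(\sigma) = n+2-2g(\sigma,\alpha),
$$
so $g(\alpha^{-1}\sigma,\alpha^{-1}) = g$ (this is also the already-stated fact that all hypermaps in the hexagonal diagram share a genus). Now substituting $(\alpha^{-1}\sigma,\alpha^{-1})$ for $(\sigma,\alpha)$ in Proposition~\ref{prop:deletions}, the relevant sequence length becomes $z(\sigma)-1$, and the proposition says precisely that the spanning genus $g$ unicellular hypermaps of $(\alpha^{-1}\sigma,\alpha^{-1})$ are exactly those obtained by applying a length $z(\sigma)-1$ sequence of topological hyperdeletions $\gamma_1,\ldots,\gamma_{z(\sigma)-1}$ to it. That is the claim.

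No real obstacle arises beyond bookkeeping; the one point to keep in mind is that ``topological hyperdeletion'' is defined relative to the hypermap currently being acted on, so each $\gamma_i$ must be tested against $(\alpha^{-1}\sigma,\alpha^{-1}\gamma_1\cdots\gamma_{i-1})$ rather than against $(\sigma,\alpha)$ or $(\alpha^{-1}\sigma,\alpha^{-1})$. Proposition~\ref{prop:deletions} is already phrased in this recursive way (via the definition of a sequence of hyperdeletions), so nothing further is required. One may, if desired, add the remark that by Lemma~\ref{lem:deletion_contraction} these $\gamma_i$ are the transpositions of a sequence of topological hypercontractions of the original hypermap $(\sigma,\alpha)$, which also explains the choice of the letter $\gamma$.
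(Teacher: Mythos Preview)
Your proposal is correct and matches the paper's own justification, which is simply the one-line remark that the corollary follows by applying Proposition~\ref{prop:deletions} to the dual hypermap. You have merely made explicit the two bookkeeping checks (that the face permutation of the dual is $\sigma$, so the sequence has length $z(\sigma)-1$, and that the dual has the same genus $g$) that the paper leaves implicit.
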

By Lemma~\ref{lem:deletion_contraction},
Corollary~\ref{cor:dual_deletions} may be rephrased as follows.
\begin{corollary}
\label{cor:contractions}  
Given a hypermap $(\sigma,\alpha)$ of genus $g$, the spanning
genus $g$ unicellular hypermaps of its dual
$(\alpha^{-1}\sigma,\alpha^{-1})$ are exactly the duals of all hypermonopoles of the form $(\gamma\sigma,\gamma\alpha)$ where
$\gamma=\gamma_{z(\sigma)-1}\gamma_{z(\sigma)-2}\cdots\gamma_{1}$,
and $\gamma_{1}, \gamma_{2}, \cdots, \gamma_{z(\sigma)-1}$ is 
a sequence of topological hypercontractions for $(\sigma,\alpha)$.
\end{corollary}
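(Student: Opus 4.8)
The plan is to obtain this statement by translating Corollary~\ref{cor:dual_deletions} through the hyperdeletion--hypercontraction duality of Lemma~\ref{lem:deletion_contraction}. By Corollary~\ref{cor:dual_deletions}, the spanning genus $g$ unicellular hypermaps of $(\alpha^{-1}\sigma,\alpha^{-1})$ are exactly the hypermaps $(\alpha^{-1}\sigma,\alpha^{-1}\gamma_1\cdots\gamma_{z(\sigma)-1})$ obtained from $(\alpha^{-1}\sigma,\alpha^{-1})$ by a sequence of topological hyperdeletions $\gamma_1,\ldots,\gamma_{z(\sigma)-1}$. So it suffices to prove two things: first, that a sequence $\gamma_1,\ldots,\gamma_{z(\sigma)-1}$ of transpositions is a sequence of topological hyperdeletions for $(\alpha^{-1}\sigma,\alpha^{-1})$ if and only if it is a sequence of topological hypercontractions for $(\sigma,\alpha)$; and second, that the resulting hypermap $(\alpha^{-1}\sigma,\alpha^{-1}\gamma_1\cdots\gamma_{z(\sigma)-1})$ is the dual of the hypermonopole $(\gamma\sigma,\gamma\alpha)$ with $\gamma=\gamma_{z(\sigma)-1}\cdots\gamma_1$. (That $(\gamma\sigma,\gamma\alpha)$ is automatically a monopole will follow from the fact that each topological hypercontraction decreases the number of vertices by exactly one, so after $z(\sigma)-1$ of them only one vertex remains.)

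The second point is a direct computation. Since every $\gamma_j$ is its own inverse, for any $i$ the dual of $(\gamma_i\cdots\gamma_1\sigma,\gamma_i\cdots\gamma_1\alpha)$ is
\[
\bigl((\gamma_i\cdots\gamma_1\alpha)^{-1}(\gamma_i\cdots\gamma_1\sigma),\ (\gamma_i\cdots\gamma_1\alpha)^{-1}\bigr)
=\bigl(\alpha^{-1}\sigma,\ \alpha^{-1}\gamma_1\cdots\gamma_i\bigr).
\]
For $i=0$ this simply records that duality is an involution, and for $i=z(\sigma)-1$ it gives the asserted identification of the final hypermap.

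For the first point I would induct on $i$, maintaining the invariant that once $\gamma_1,\ldots,\gamma_{i-1}$ have been applied, the hypermap on the hyperdeletion side is $(\alpha^{-1}\sigma,\alpha^{-1}\gamma_1\cdots\gamma_{i-1})$ and, by the computation above, its dual---the hypermap on the hypercontraction side---is $(\gamma_{i-1}\cdots\gamma_1\sigma,\gamma_{i-1}\cdots\gamma_1\alpha)$. Applying Lemma~\ref{lem:deletion_contraction} to the hypermap $(\alpha^{-1}\sigma,\alpha^{-1}\gamma_1\cdots\gamma_{i-1})$ together with the transposition $\gamma_i$---which disconnects $\alpha^{-1}\gamma_1\cdots\gamma_{i-1}$ precisely when it disconnects the inverse permutation $\gamma_{i-1}\cdots\gamma_1\alpha$, since a permutation and its inverse induce the same partition into cycles---the operation
\[
(\alpha^{-1}\sigma,\alpha^{-1}\gamma_1\cdots\gamma_{i-1})\longmapsto(\alpha^{-1}\sigma,\alpha^{-1}\gamma_1\cdots\gamma_i)
\]
is a hyperdeletion if and only if the operation
\[
(\gamma_{i-1}\cdots\gamma_1\sigma,\gamma_{i-1}\cdots\gamma_1\alpha)\longmapsto(\gamma_i\cdots\gamma_1\sigma,\gamma_i\cdots\gamma_1\alpha)
\]
is a hypercontraction, and one of them is topological if and only if the other is. Since these are exactly the conditions appearing in the definitions of a sequence of hyperdeletions, respectively a sequence of hypercontractions, this advances the invariant by one step; after $z(\sigma)-1$ steps the hyperdeletion side has become $(\alpha^{-1}\sigma,\alpha^{-1}\gamma_1\cdots\gamma_{z(\sigma)-1})$. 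Feeding this, together with the identification from the previous paragraph, into Corollary~\ref{cor:dual_deletions} yields the statement.

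The only point that requires care is the bookkeeping of composition orders: hypercontractions act by multiplication on the left, whereas passing to the dual turns the accumulated product into its reverse multiplied on the right of $\alpha^{-1}$. Once this is tracked consistently there is no genuine obstacle, which is why the corollary is indeed no more than a rephrasing of Corollary~\ref{cor:dual_deletions}.
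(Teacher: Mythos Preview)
Your proof is correct and follows exactly the approach the paper indicates: the paper's entire justification is the single sentence ``By Lemma~\ref{lem:deletion_contraction}, Corollary~\ref{cor:dual_deletions} may be rephrased as follows,'' and your argument is a careful unpacking of precisely that rephrasing, with the composition-order bookkeeping made explicit. The only tiny imprecision is your remark that the $i=0$ case ``records that duality is an involution''---strictly speaking it just records the definition of the dual---but this is cosmetic and does not affect the argument.
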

Note that the dual of the hypermonopole $(\gamma\sigma,\gamma\alpha)$ is
$(\alpha^{-1}\sigma,\alpha^{-1}\gamma^{-1})$. By
Theorem~\ref{thm:dhtree}, the hypermap
$(\alpha^{-1}\sigma,\alpha^{-1}\gamma^{-1})$ is a spanning genus $g$
unicellular hypermap of $(\alpha^{-1}\sigma,\alpha^{-1})$ if
and only if $(\sigma,\gamma^{-1})$ is a spanning hypertree of
$(\sigma,\alpha)$.   

To summarize, we obtain the following result.
\begin{theorem}
\label{thm:0trees}  
Given a hypermap $(\sigma,\alpha)$, a hypermonopole may be obtained from
it by a sequence of topological hypercontractions if and only if it is
of the form $(\gamma\sigma,\gamma\alpha)$ where $(\sigma,\gamma^{-1})$ is any
spanning hypertree of $(\sigma,\alpha)$.
\end{theorem}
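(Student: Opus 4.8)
The plan is to assemble Theorem~\ref{thm:0trees} from the chain of corollaries immediately preceding it, essentially by unwinding definitions and invoking Theorem~\ref{thm:dhtree}. First I would apply Corollary~\ref{cor:contractions} to the hypermap $(\sigma,\alpha)$: the spanning genus $g$ unicellular hypermaps of the dual $(\alpha^{-1}\sigma,\alpha^{-1})$ are exactly the duals of the hypermonopoles $(\gamma\sigma,\gamma\alpha)$ obtained from sequences of topological hypercontractions for $(\sigma,\alpha)$, where $\gamma = \gamma_{z(\sigma)-1}\cdots\gamma_1$. This already identifies the set of hypermonopoles reachable by topological hypercontractions with a set parametrized by $\gamma$.

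Next I would translate the condition ``$(\alpha^{-1}\sigma,\alpha^{-1}\gamma^{-1})$ is a spanning genus $g$ unicellular hypermap of the dual'' into a statement about $(\sigma,\alpha)$ itself. The dual of $(\gamma\sigma,\gamma\alpha)$ is $((\gamma\alpha)^{-1}\gamma\sigma,(\gamma\alpha)^{-1}) = (\alpha^{-1}\gamma^{-1}\gamma\sigma, \alpha^{-1}\gamma^{-1}) = (\alpha^{-1}\sigma,\alpha^{-1}\gamma^{-1})$, as already noted in the remark after Corollary~\ref{cor:contractions}. Applying Theorem~\ref{thm:dhtree} with $\theta = \gamma^{-1}$ — so that the dual hypermap $(\alpha^{-1}\sigma, \alpha^{-1}\theta) = (\alpha^{-1}\sigma,\alpha^{-1}\gamma^{-1})$ is a spanning genus $g$ unicellular hypermap of $(\alpha^{-1}\sigma,\alpha^{-1})$ if and only if $(\sigma,\gamma^{-1})$ is a spanning unicellular hypermap of $(\sigma,\alpha)$ — and noting that the genus equation $g(\sigma,\gamma^{-1}) + g(\alpha^{-1}\sigma,\alpha^{-1}\gamma^{-1}) = g(\sigma,\alpha) = g$ forces $g(\sigma,\gamma^{-1}) = 0$ precisely when $g(\alpha^{-1}\sigma,\alpha^{-1}\gamma^{-1}) = g$, we conclude that the dual of $(\gamma\sigma,\gamma\alpha)$ is a spanning \emph{genus $g$} unicellular hypermap of $(\alpha^{-1}\sigma,\alpha^{-1})$ if and only if $(\sigma,\gamma^{-1})$ is a spanning \emph{hypertree} (genus zero unicellular hypermap) of $(\sigma,\alpha)$.

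Combining these two steps gives the theorem: a hypermonopole is obtainable from $(\sigma,\alpha)$ by a sequence of topological hypercontractions if and only if it equals $(\gamma\sigma,\gamma\alpha)$ for some $\gamma$ with $(\sigma,\gamma^{-1})$ a spanning hypertree of $(\sigma,\alpha)$. Since every spanning hypertree $(\sigma,\theta)$ arises this way (set $\gamma = \theta^{-1}$, which is the $\gamma$ produced by Corollary~\ref{cor:contractions} applied to the dual spanning unicellular hypermap corresponding to $\theta$), the ``if and only if'' quantifies over all spanning hypertrees as claimed.

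The main obstacle I anticipate is bookkeeping the genus condition carefully — Corollary~\ref{cor:contractions} speaks of genus $g$ spanning unicellular hypermaps of the \emph{dual}, and one must be sure that the correspondence of Theorem~\ref{thm:dhtree} sends these exactly to the \emph{genus zero} spanning unicellular hypermaps (hypertrees) of $(\sigma,\alpha)$, rather than to spanning unicellular hypermaps of some intermediate genus. This is handled entirely by the displayed genus identity in Theorem~\ref{thm:dhtree}, but it is the one place where a sign or index slip would invalidate the argument, so I would state it explicitly. Everything else is a routine substitution $\theta = \gamma^{-1}$ and an appeal to the earlier results.
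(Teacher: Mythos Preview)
Your proposal is correct and follows essentially the same route as the paper: the paper derives Theorem~\ref{thm:0trees} by combining Corollary~\ref{cor:contractions} with the observation (stated in the paragraph immediately preceding the theorem) that, via Theorem~\ref{thm:dhtree}, $(\alpha^{-1}\sigma,\alpha^{-1}\gamma^{-1})$ is a spanning genus~$g$ unicellular hypermap of the dual if and only if $(\sigma,\gamma^{-1})$ is a spanning hypertree of $(\sigma,\alpha)$. Your explicit invocation of the genus identity in Theorem~\ref{thm:dhtree} to pin down that ``genus~$g$ on the dual side'' corresponds to ``genus~$0$ on the original side'' is exactly the bookkeeping the paper leaves implicit.
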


Applying Corollary~\ref{cor:MCP} to the spanning genus $g$ unicellular
hypermaps of the dual of a hypermap of genus~$g$ we obtain the following
statement. 

\begin{proposition}
\label{prop:vertextour}  
For a hypermap $(\sigma,\alpha)$ there is a bijection
between the spanning hypertrees of $(\sigma,\alpha)$
and the set
$$C_{\alpha^{-1}\sigma}(\alpha^{-1}\sigma,\sigma)=\{\eta\::\: z(\eta)=1,
g(\alpha^{-1}\sigma,\eta)=g(\sigma,\alpha),
g(\sigma,\eta)=0\},$$
taking each spanning hypertree
$(\sigma,\gamma^{-1})$ of $(\sigma,\alpha)$ into $\eta=\gamma\sigma$.
\end{proposition}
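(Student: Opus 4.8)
The plan is to chain together Corollary~\ref{cor:MCP}, applied to the \emph{dual} hypermap, with the correspondence of spanning unicellular hypermaps recorded in Theorem~\ref{thm:dhtree}. Write $g=g(\sigma,\alpha)$. Since the dual hypermap $(\alpha^{-1}\sigma,\alpha^{-1})$ also has genus $g$, Corollary~\ref{cor:MCP}---with the roles of $\sigma$ and $\alpha$ now played by $\alpha^{-1}\sigma$ and $\alpha^{-1}$---yields a bijection between the spanning genus $g$ unicellular hypermaps $(\alpha^{-1}\sigma,\theta)$ of the dual and the set $C_{\alpha^{-1}\sigma}(\alpha^{-1}\sigma,(\alpha^{-1})^{-1}\alpha^{-1}\sigma)$, under the rule $\theta\mapsto\eta=\theta^{-1}\alpha^{-1}\sigma$. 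Here I would simplify $(\alpha^{-1})^{-1}\alpha^{-1}\sigma=\alpha\alpha^{-1}\sigma=\sigma$, and check that $g(\alpha^{-1}\sigma,\sigma)=g$: since $\sigma^{-1}(\alpha^{-1}\sigma)$ is conjugate to $\alpha^{-1}$ we have $z(\sigma^{-1}\alpha^{-1}\sigma)=z(\alpha)$, so \eqref{eq:genusdef} gives $n+2-2g(\alpha^{-1}\sigma,\sigma)=z(\alpha^{-1}\sigma)+z(\sigma)+z(\alpha)=n+2-2g$; alternatively, observe that $(\alpha^{-1}\sigma,\sigma)$ is the Kreweras dual of the dual of $(\sigma,\alpha)$. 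Consequently the target set is precisely $C_{\alpha^{-1}\sigma}(\alpha^{-1}\sigma,\sigma)=\{\eta:z(\eta)=1,\ g(\alpha^{-1}\sigma,\eta)=g(\sigma,\alpha),\ g(\sigma,\eta)=0\}$, which is the set in the statement.

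Next I would invoke Theorem~\ref{thm:dhtree} to transport this across the duality. It states that $(\sigma,\gamma^{-1})$ is a spanning unicellular hypermap of $(\sigma,\alpha)$ if and only if $(\alpha^{-1}\sigma,\alpha^{-1}\gamma^{-1})$ is a spanning unicellular hypermap of the dual, and, when this holds, $g(\sigma,\gamma^{-1})+g(\alpha^{-1}\sigma,\alpha^{-1}\gamma^{-1})=g$. Restricting to the spanning hypertrees of $(\sigma,\alpha)$, i.e.\ to those $\gamma^{-1}$ with $g(\sigma,\gamma^{-1})=0$, the genus identity forces $g(\alpha^{-1}\sigma,\alpha^{-1}\gamma^{-1})=g$, so $(\sigma,\gamma^{-1})\mapsto(\alpha^{-1}\sigma,\alpha^{-1}\gamma^{-1})$ is a bijection from the spanning hypertrees of $(\sigma,\alpha)$ onto the spanning genus $g$ unicellular hypermaps of the dual.

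Finally I would compose the two bijections. A spanning hypertree $(\sigma,\gamma^{-1})$ of $(\sigma,\alpha)$ is sent first to the dual's spanning unicellular hypermap $(\alpha^{-1}\sigma,\theta)$ with $\theta=\alpha^{-1}\gamma^{-1}$, and then by Corollary~\ref{cor:MCP} to
$$\eta=\theta^{-1}\alpha^{-1}\sigma=(\alpha^{-1}\gamma^{-1})^{-1}\alpha^{-1}\sigma=\gamma\alpha\alpha^{-1}\sigma=\gamma\sigma,$$
which is exactly the claimed rule; being a composite of bijections, $(\sigma,\gamma^{-1})\mapsto\gamma\sigma$ is a bijection, as desired.

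The only genuinely fiddly part is the substitution in the first paragraph: one must keep careful track of the nested inverses when feeding $(\alpha^{-1}\sigma,\alpha^{-1})$ into Corollary~\ref{cor:MCP}, and must verify the two genus equalities $g(\alpha^{-1}\sigma,\alpha^{-1})=g$ and $g(\alpha^{-1}\sigma,\sigma)=g$ so that the three defining conditions of the image set specialize exactly to those in the statement. I expect this to be the main source of possible slips, but it is entirely mechanical and presents no real obstacle.
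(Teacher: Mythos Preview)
Your proof is correct and follows essentially the same route as the paper: both apply Corollary~\ref{cor:MCP} to the dual hypermap $(\alpha^{-1}\sigma,\alpha^{-1})$, use the correspondence of Theorem~\ref{thm:dhtree} between spanning hypertrees of $(\sigma,\alpha)$ and spanning genus~$g$ unicellular hypermaps of the dual, and compose to obtain the rule $\gamma^{-1}\mapsto\gamma\sigma$, with the same verification that $g(\alpha^{-1}\sigma,\sigma)=g(\sigma,\alpha)$ via $z(\sigma^{-1}\alpha^{-1}\sigma)=z(\alpha)$. Your write-up is in fact slightly more explicit than the paper's, which leaves the invocation of Theorem~\ref{thm:dhtree} implicit.
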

\begin{proof}
Each spanning hypertree $(\sigma,\gamma^{-1})$ of $(\sigma,\alpha)$
corresponds to a spanning genus $g$ unicellular hypermap
$(\alpha^{-1}\sigma,\alpha^{-1}\gamma^{-1})$ of the dual hypermap  
$(\alpha^{-1}\sigma,\alpha^{-1})$. The map described in
Corollary~\ref{cor:MCP} sends
$(\alpha^{-1}\sigma,\alpha^{-1}\gamma^{-1})$ into 
$\eta=(\alpha^{-1}\gamma^{-1})^{-1}\alpha^{-1}\sigma=\gamma\sigma$. The
range of the map $(\sigma,\gamma^{-1})\mapsto \gamma\sigma$ is
$$C_{\alpha^{-1}\sigma}(\alpha^{-1}\sigma,\sigma)
=\{\eta\::\: z(\eta)=1,
g(\alpha^{-1}\sigma,\eta)=g(\alpha^{-1}\sigma,\sigma), g(\sigma,\eta)=0\}. 
$$
Note finally that $g(\alpha^{-1}\sigma,\sigma)=g(\sigma,\alpha)$
is a direct consequence of~(\ref{eq:genusdef}) and
  $z(\sigma^{-1}\alpha^{-1}\sigma)=z(\alpha)$. 
\end{proof}  

A particularly important instance of Proposition~\ref{prop:vertextour}
is the case of maps. For these hyperdeletions and hypercontraction are
actual deletions and contractions, as defined in graph theory. A
sequence of deletions or contractions is a list of pairwise disjoint
transpositions, their order does not matter, only the subgraph formed by
the deleted, respectively contracted edges. As a consequence a sequence
of contractions $\gamma_1,\ldots,\gamma_k$ yields an involution
$\gamma=\gamma_k\cdots \gamma_1$ which is its own inverse. We obtain the
following corollary.

\begin{corollary}
\label{cor:vertextour}  
For a map $(\sigma,\alpha)$ there is a bijection
between the spanning trees of the underlying graph of $(\sigma,\alpha)$
and the set
$$C_{\alpha^{-1}\sigma}(\alpha^{-1}\sigma,\sigma)=\{\eta\::\: z(\eta)=1,
g(\alpha^{-1}\sigma,\eta)=g(\sigma,\alpha),
g(\sigma,\eta)=0\},$$
taking each spanning tree
$(\sigma,\gamma)$ into $\eta=\gamma\sigma$.
\end{corollary}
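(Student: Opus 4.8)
The plan is to obtain Corollary~\ref{cor:vertextour} as a direct specialization of Proposition~\ref{prop:vertextour}; the only thing to check is that for a \emph{map} the phrase ``spanning hypertree of $(\sigma,\alpha)$'' is just a disguise for ``spanning tree of the underlying graph'', and that in this case the refining permutation equals its own inverse, so that the ``$\gamma^{-1}$'' appearing in Proposition~\ref{prop:vertextour} may legitimately be replaced by ``$\gamma$''. First I would observe that, since every cycle of $\alpha$ has length at most $2$, the only noncrossing partitions of a two-element set $\{a,b\}$ are $\{\{a,b\}\}$ and $\{\{a\},\{b\}\}$; hence $\theta$ is a refinement of $\alpha$ if and only if $\theta$ is obtained from $\alpha$ by choosing, independently for each $2$-cycle $(a,b)$ of $\alpha$, to keep $(a,b)$ as a cycle of $\theta$ or to replace it by the two fixed points $(a)(b)$. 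In particular every refinement $\theta$ of $\alpha$ is again a product of pairwise disjoint transpositions, so $\theta=\theta^{-1}$, and $\theta$ is completely determined by the set $T$ of edges of $(\sigma,\alpha)$ whose $2$-cycle it retains.

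Next I would pin down which refinements $\theta$ of $\alpha$ yield spanning hypertrees. The pair $(\sigma,\theta)$ is a hypermap precisely when $\langle\sigma,\theta\rangle$ is transitive, and for a map this happens exactly when the spanning subgraph with edge set $T$ is connected (the orbits of $\langle\sigma,\theta\rangle$ are the darts sitting at vertices that are joined through edges of $T$). Given connectedness, $(\sigma,\theta)$ is a spanning hypertree of $(\sigma,\alpha)$ iff it is unicellular of genus zero, i.e.\ $z(\theta^{-1}\sigma)=1$ and $g(\sigma,\theta)=0$. Substituting $z(\theta^{-1}\sigma)=1$ and $z(\theta)=n-|T|$ ($\theta$ consists of $|T|$ transpositions and $n-2|T|$ fixed points) into Equation~\eqref{eq:genusdef}, the condition $g(\sigma,\theta)=0$ collapses to $|T|=z(\sigma)-1$; a connected spanning subgraph with $z(\sigma)-1$ edges is exactly a spanning tree. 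Thus $\theta\mapsto(\sigma,\theta)$ restricts to a bijection between the spanning trees of the underlying graph of $(\sigma,\alpha)$ and the spanning hypertrees of $(\sigma,\alpha)$, and for a spanning tree the associated permutation $\gamma\df\theta$ (the product of the transpositions of its edges) is an involution.

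Finally I would compose this identification with Proposition~\ref{prop:vertextour}. That proposition sends the spanning hypertree $(\sigma,\gamma^{-1})$ to $\eta=\gamma\sigma$; since here $\gamma^{-1}=\gamma$, the spanning tree encoded by $\gamma$ is sent to $\eta=\gamma\sigma$, and the image set is the same $C_{\alpha^{-1}\sigma}(\alpha^{-1}\sigma,\sigma)$ as in Proposition~\ref{prop:vertextour}, which is exactly the claimed bijection. I do not expect a genuine obstacle here: the only place where some care is needed is the bookkeeping of the previous paragraph, namely that a refinement of an involution is again an involution and that connectedness together with genus zero and unicellularity forces the edge count $|T|=z(\sigma)-1$; both facts are immediate from the definitions and Equation~\eqref{eq:genusdef}.
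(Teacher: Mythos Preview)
Your proposal is correct and follows essentially the same approach as the paper: specialize Proposition~\ref{prop:vertextour} to maps and observe that a refinement of a fixed-point-free involution is itself an involution, so $\gamma^{-1}=\gamma$. The paper phrases the involution observation in the language of hypercontractions (a sequence of contractions in a map is a list of pairwise disjoint transpositions, hence their product is an involution), while you argue it directly from the definition of refinement; you also spell out explicitly, via~\eqref{eq:genusdef} and the edge count $|T|=z(\sigma)-1$, why spanning hypertrees of a map are exactly spanning trees of the underlying graph, a point the paper leaves implicit.
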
   

We wish to point out that Corollary~\ref{cor:vertextour}
implies Bernardi's~\cite[Lemma 4.2]{Bernardi-embeddings} stating that the
{\em tour} of a spanning tree of an embedded topological graph is always
a cyclic permutation. For each spanning tree $(\sigma,\gamma)$ we may
define the inverse of $\eta=\gamma\sigma$ as the {\em vertex tour} of
the spanning tree. We illustrate this observation with the next
example. 

\begin{example}
The map $(\sigma,\alpha)$, given by $\sigma=(1,4,2,12)(8,11,9)(5,7,3,6)(10)$
and $\alpha=(1,7)(2,8)(3,9)(4,10),(5,11)(6,12)$, and shown in
Figure~\ref{fig:bernardi}, is isomorphic as an embedded topological
graph to an example of Bernardi. (It is drawn as the mirror image of the
embedded topological graph on the left hand side of~\cite[Figure
  1]{Bernardi-embeddings}. Bernardi's letters $a$ through $f$ correspond to the
numbers $1$ through $6$, while the letters $a'$ through $f'$ correspond
to the numbers $7$ trough $12$ in our picture. We took the mirror image
of Bernardi's figure to match his counterclockwise listing of vertex
labels with our clockwise listing. Bernardi gives an example of a
spanning tree~\cite[Figure 4]{Bernardi-embeddings}, which corresponds to
$\gamma=(1,7)(2,8)(4,10)$, marked in bold on the left hand side of
Figure~\ref{fig:bernardi}. The order of the transpositions does not
matter, contracting these edges results in a monopole. The permutation
$$
\eta=\gamma\sigma=(1,10,4,8,11,9,2,12,7,3,6,5)\in
C_{\alpha^{-1}\sigma}(\alpha^{-1}\sigma,\sigma) 
$$
corresponds to the list $(a,d',d,b',e',c',b,f',a',c,f,e)$ in Bernardi's
notation which is the inverse of the tour of the graph (or
{\em motion function}) defined by Bernardi~\cite[p.\ 145]{Bernardi-embeddings}.
\end{example}

\begin{figure}[h]
%90%  
\begin{center}
\input{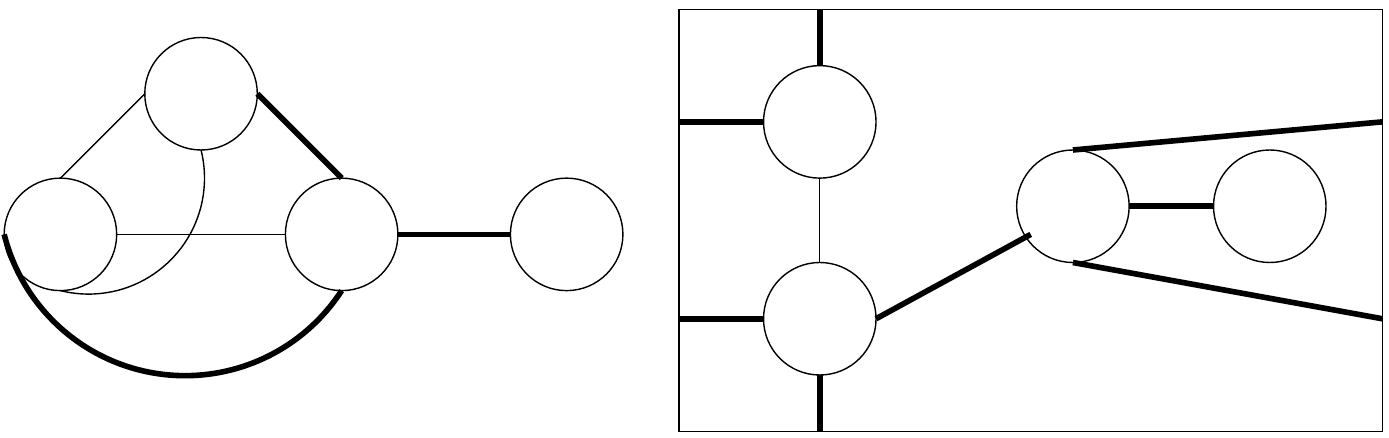_t}
\end{center}
\caption{Bernardi's example}
\label{fig:bernardi} 
\end{figure}

The map shown in Figure~\ref{fig:bernardi} has genus $1$, and we may
draw it on the torus without crossing edges, as shown on the right hand
side of the figure. (The rules of identifying pairs of points on the
boundary of the bounding box are the same as in
Figure~\ref{fig:robert}.) The spanning tree on the left 
may be extended to a spanning genus $1$ unicellular hypermap by adding
the edges $(3,9)$ 
and $(6,12)$, also shown in bold on the right hand side. The spanning
genus $1$ unicellular hypermap $\theta=(1,7)(2,8)(3,9)(4,10)(6,12)$
corresponds to 
$$
\zeta=\theta^{-1}\sigma=(1,10,4,8,11,3,12,7,9,2,6,5)\in
C_{\sigma}(\sigma,\alpha^{-1}\sigma) 
$$
via the bijection described in Corollary~\ref{cor:MCP}. This too is a
{\em tour} of a spanning tree, albeit it is the tour of a spanning tree
of genus $1$. This observation inspires the following definition.

\begin{definition}
\label{def:Bernardi_tour}
Given a hypermap $(\sigma,\alpha)$ of genus $g$, for each spanning
hypertree $(\sigma,\gamma^{-1})$ of $(\sigma,\alpha)$ we call
the cyclic permutation $\eta=\gamma\sigma$ the {\em vertex tour} of the
spanning hypertree, and for each spanning 
genus $g$ unicellular hypermap $(\sigma,\theta)$ we call the cyclic permutation
$\zeta=\theta^{-1}\sigma$ the {\em face tour} of
the spanning genus $g$ unicellular hypermap.   
\end{definition}  
By Corollary~\ref{cor:MCP} the set of face tours is exactly the set
$C_{\sigma}(\sigma,\alpha)$. If we write the labels in the cyclic order
of a face tour $\zeta$ and represent the faces as in the upper half of
Figure~\ref{fig:1line}, we get a noncrossing representation of the faces
since, by the definition of $C_{\sigma}(\sigma,\alpha)$, we have
$g(\alpha^{-1}\sigma,\zeta)=0$. Similarly, by Corollary~\ref{prop:vertextour} the
set of vertex tours is exactly the set 
$C_{\alpha^{-1}\sigma}(\alpha^{-1}\sigma,\sigma)$. If we write the
labels in the cyclic order 
of a face tour $\eta$ and represent the vertices as in the lower half of
Figure~\ref{fig:1line}, we get a noncrossing representation of the
vertices. The notion of a face tour and a vertex tour coincides for
genus zero hypermaps.   

The relation between the spanning genus $1$ unicellular hypermap and
spanning tree in Figure~\ref{fig:bernardi} and their associated face-
and vertex tours may be also interpreted as follows. We begin with a
hypermap $(\sigma,\alpha)$ of genus $g$. We perform a sequence of
topological hyperdeletions
$\delta_1,\ldots,\delta_{|\alpha^{-1}\sigma|-1}$ and obtain a 
spanning genus $g$ unicellular hypermap $(\sigma,\alpha\delta)$ of
$(\sigma,\alpha)$ with the single face $\zeta=\delta^{-1}\alpha^{-1}\sigma\in
C_{\sigma}(\sigma,\alpha^{-1}\sigma)$. These topological hyperdeletions
leave the vertices $\sigma$ unchanged. Next we perform a sequence of
topological hypercontractions $\gamma_1,\ldots,\gamma_{|\sigma|-1}$ and obtain a
monopole $(\gamma\sigma,\gamma\alpha\delta)$ with the single vertex
$\eta=\gamma\sigma\in C_{\alpha^{-1}\sigma}(\alpha^{-1}\sigma,\sigma)$
and single face $\zeta$. Here $\gamma=\gamma_{|\sigma|-1}\cdots\gamma_1$
and $(\sigma,\gamma^{-1})$ is a spanning hypertree of
$(\sigma,\alpha)$. The vertex $\eta$ is shown as a dashed circle in
the middle of Figure~\ref{fig:bernardi2}, and the face $\zeta$ appears as a
collection of four line segments (also to be interpreted as a circle on
the torus) on the same illustration. Inside the circle representing
$\eta$ we find the cycles of $\sigma$, represented by the shaded
areas. Inside the circle represented by $\zeta$ (shown in the four
corners of Figure~\ref{fig:bernardi2}) we find the cycles of
$\alpha^{-1}\sigma=(1,10,4,8,5)(2,6,11,3,12,7,9)$, represented by the
shaded areas.  Between the two circles we have the hyperedges of
$(\eta,\eta\zeta^{-1})=(\gamma\sigma,\gamma\theta)$.

\begin{figure}[h]
%100%  
\begin{center}
\input{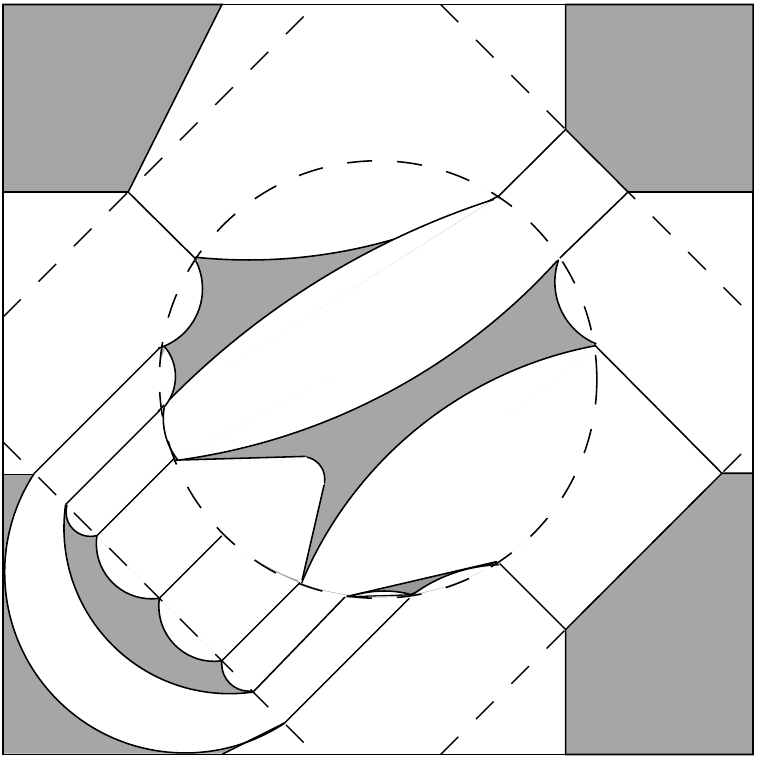_t}
\end{center}
\caption{A $2$-disk diagram of Bernardi's example}
\label{fig:bernardi2} 
\end{figure}

Wanting to generalize this picture to all hypermaps, we
make the following definition.
\begin{definition}
Given a hypermap $(\sigma, \alpha)$, we say that the pair of circular
permutations $(\eta,\zeta)$ satisfying $\eta\in
C_{\alpha^{-1}\sigma}(\alpha^{-1}\sigma,\sigma)$ and $\zeta\in
C_{\sigma}(\sigma,\alpha)$ is {\em a compatible pair of tours} if
$(\eta,\eta\zeta^{-1})$ is a hypermap of genus $g(\sigma,\alpha)$.
\end{definition} 

\begin{remark}
Note that the only vertex of $(\eta,\eta\zeta^{-1})$ is the only cycle
of $\eta$ and the only face is the only cycle of
$(\eta\zeta^{-1})^{-1}\eta=\zeta$.  
\end{remark}  

\begin{theorem}
Given a hypermap $(\sigma, \alpha)$, the pair of circular
permutations $(\eta,\zeta)$ satisfying $\eta\in
C_{\alpha^{-1}\sigma}(\alpha^{-1}\sigma,\sigma)$ and $\zeta\in
C_{\sigma}(\sigma,\alpha)$ is compatible if and only if there is a
sequence of topological hyperdeletions
$\delta_1,\ldots,\delta_{|\alpha^{-1}\sigma|-1}$ followed by a sequence
of topological hypercontractions $\gamma_1,\ldots,\gamma_{|\sigma|-1}$ taking
$(\sigma, \alpha)$ into $(\eta,\eta\zeta^{-1})$. 
\end{theorem}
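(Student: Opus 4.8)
The plan is to prove the two directions of the equivalence separately, using the machinery already assembled. For the forward direction, suppose we are given a sequence of topological hyperdeletions $\delta_1,\ldots,\delta_{|\alpha^{-1}\sigma|-1}$ followed by topological hypercontractions $\gamma_1,\ldots,\gamma_{|\sigma|-1}$ carrying $(\sigma,\alpha)$ to $(\eta,\eta\zeta^{-1})$. Set $\delta=\delta_1\cdots\delta_{|\alpha^{-1}\sigma|-1}$ and $\gamma=\gamma_{|\sigma|-1}\cdots\gamma_1$. By Lemma~\ref{lem:deletions} (or Proposition~\ref{prop:deletions}), the intermediate hypermap $(\sigma,\alpha\delta)$ is a spanning genus $g$ unicellular hypermap of $(\sigma,\alpha)$, so its unique face is $\zeta=(\alpha\delta)^{-1}\sigma=\delta^{-1}\alpha^{-1}\sigma$, and by Corollary~\ref{cor:MCP} this $\zeta$ lies in $C_{\sigma}(\sigma,\alpha)$ (using the Kreweras-dual reformulation, $\zeta\in C_\sigma(\sigma,\alpha^{-1}\sigma)$ translated appropriately — I must be careful which of the two $C$-sets is meant, but the genus and cycle-count conditions work out). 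Then applying the topological hypercontractions $\gamma_1,\ldots,\gamma_{|\sigma|-1}$ to $(\sigma,\alpha\delta)$ — these are legitimate hypercontractions since $\alpha\delta$ is a refinement of $\alpha$ and disconnecting $\alpha\delta$ while connecting $\sigma$ — produces the monopole $(\gamma\sigma,\gamma\alpha\delta)$. By Theorem~\ref{thm:0trees} applied to $(\sigma,\alpha\delta)$, the pair $(\sigma,\gamma^{-1})$ is a spanning hypertree of $(\sigma,\alpha\delta)$; since $\alpha\delta$ is itself a refinement of $\alpha$, $(\sigma,\gamma^{-1})$ is also a spanning hypertree of $(\sigma,\alpha)$. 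Now $\eta=\gamma\sigma$ lies in $C_{\alpha^{-1}\sigma}(\alpha^{-1}\sigma,\sigma)$ by Proposition~\ref{prop:vertextour}. It remains to check $(\eta,\eta\zeta^{-1})=(\gamma\sigma,\gamma\alpha\delta)$: indeed the face permutation of $(\gamma\sigma,\gamma\alpha\delta)$ is $(\gamma\alpha\delta)^{-1}\gamma\sigma=\delta^{-1}\alpha^{-1}\sigma=\zeta$, so $\gamma\alpha\delta=\gamma\sigma\zeta^{-1}=\eta\zeta^{-1}$, and the genus is $g(\sigma,\alpha)$ because each step was topological. Hence $(\eta,\zeta)$ is a compatible pair.

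For the converse, suppose $(\eta,\zeta)$ is a compatible pair: $\eta\in C_{\alpha^{-1}\sigma}(\alpha^{-1}\sigma,\sigma)$, $\zeta\in C_\sigma(\sigma,\alpha)$, and $(\eta,\eta\zeta^{-1})$ is a genus $g$ hypermap. Because $\zeta\in C_\sigma(\sigma,\alpha)$, Corollary~\ref{cor:MCP} (or Theorem~\ref{thm:Machi}, in the correct dual form) tells us that $\zeta$ is precisely the face tour of some spanning genus $g$ unicellular hypermap $(\sigma,\theta)$ of $(\sigma,\alpha)$, that is, $\zeta=\theta^{-1}\sigma$; and by Proposition~\ref{prop:deletions} there is a sequence of topological hyperdeletions $\delta_1,\ldots,\delta_{|\alpha^{-1}\sigma|-1}$ carrying $(\sigma,\alpha)$ to $(\sigma,\theta)$, so $\theta=\alpha\delta$ with $\delta=\delta_1\cdots\delta_{|\alpha^{-1}\sigma|-1}$. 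Similarly, because $\eta\in C_{\alpha^{-1}\sigma}(\alpha^{-1}\sigma,\sigma)$, Proposition~\ref{prop:vertextour} gives $\eta=\gamma\sigma$ where $(\sigma,\gamma^{-1})$ is a spanning hypertree of $(\sigma,\alpha)$, and Theorem~\ref{thm:0trees} realizes the monopole $(\gamma\sigma,\gamma\alpha)$ via a sequence of topological hypercontractions $\gamma_1,\ldots,\gamma_{|\sigma|-1}$ applied to $(\sigma,\alpha)$. The remaining task is to show that the \emph{same} hypercontraction sequence $\gamma_1,\ldots,\gamma_{|\sigma|-1}$ can be applied to the intermediate $(\sigma,\theta)=(\sigma,\alpha\delta)$ (not just to $(\sigma,\alpha)$), and that the result is exactly $(\eta,\eta\zeta^{-1})$. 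Validity of the hypercontractions on $(\sigma,\alpha\delta)$ follows because each $\gamma_i$ connects $\sigma$ (this only involves $\sigma$, which is unchanged by the hyperdeletions) and disconnects $\alpha\delta$; the latter needs a short argument — since $\gamma^{-1}$ is a spanning hypertree, the $\gamma_i$ are disjoint from the support structure that $\delta$ modifies, or more robustly one uses that the face tour $\zeta=\theta^{-1}\sigma$ and vertex tour $\eta=\gamma\sigma$ together with compatibility force $\gamma\alpha\delta=\eta\zeta^{-1}$ to have the right cycle counts at every stage. Then tracking cycle numbers via Serret's lemma shows each $\gamma_i$ must disconnect the current hyperedge permutation, so all steps remain topological hypercontractions.

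The main obstacle I anticipate is precisely this compatibility-of-sequences step in the converse: a priori the hyperdeletion sequence realizing $\zeta$ and the hypercontraction sequence realizing $\eta$ are produced independently, and one must show they can be \emph{concatenated}. The cleanest route is to argue via the target monopole. We know $(\eta,\eta\zeta^{-1})$ is a genus $g$ hypermap whose only vertex is $\eta$ and only face is $\zeta$. The key identity to establish is $\eta\zeta^{-1}=\gamma\alpha\delta$, i.e. $\gamma\sigma(\theta^{-1}\sigma)^{-1}=\gamma\sigma\sigma^{-1}\theta=\gamma\theta=\gamma\alpha\delta$ — which holds automatically once we know $\theta=\alpha\delta$ and $\eta=\gamma\sigma$! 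So in fact the \emph{only} genuine content is: given a spanning genus $g$ unicellular hypermap $(\sigma,\theta)$ and a spanning hypertree $(\sigma,\gamma^{-1})$ of $(\sigma,\alpha)$ such that $(\gamma\sigma,\gamma\theta)$ has genus $g$, one can reach $(\gamma\sigma,\gamma\theta)$ from $(\sigma,\theta)$ by topological hypercontractions. Here I invoke Theorem~\ref{thm:0trees} applied to the hypermap $(\sigma,\theta)$ itself: $(\gamma\sigma,\gamma\theta)$ is obtainable from $(\sigma,\theta)$ by topological hypercontractions precisely when $(\sigma,\gamma^{-1})$ is a spanning hypertree of $(\sigma,\theta)$. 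Since $(\sigma,\theta)$ is a genus $g$ unicellular hypermap and $(\sigma,\gamma^{-1})$ is genus $0$ unicellular (a hypertree) with $\gamma^{-1}$ a refinement of $\theta$ — which we get from $\gamma^{-1}$ being a refinement of $\alpha$ \emph{and} $\theta$ being a refinement of $\alpha$, combined with the genus bookkeeping $g(\sigma,\gamma^{-1})+\text{(something)}=g(\sigma,\theta)$ forced by compatibility via Theorem~\ref{thm:dhtree} — the hypotheses of Theorem~\ref{thm:0trees} are met. The delicate point requiring care is verifying $\gamma^{-1}$ refines $\theta$ rather than merely refining $\alpha$; this is where the genus condition $g(\eta,\eta\zeta^{-1})=g(\sigma,\alpha)$ in the definition of compatible pair is essential, as it rules out the refinement "passing through" the wrong intermediate. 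Once that refinement is in hand, concatenation is immediate and both directions close.
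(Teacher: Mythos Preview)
Your forward direction is essentially the paper's argument and is correct. The converse is where the gap lies, and you have correctly located it yourself: you need $(\sigma,\gamma^{-1})$ to be a spanning hypertree of $(\sigma,\theta)$, equivalently $\gamma^{-1}$ to refine $\theta$. But your justification --- ``from $\gamma^{-1}$ refining $\alpha$ and $\theta$ refining $\alpha$, combined with genus bookkeeping via Theorem~\ref{thm:dhtree}'' --- does not actually establish this. Two refinements of $\alpha$ need not be comparable, and Theorem~\ref{thm:dhtree} concerns the dual hypermap and the additivity $g(\sigma,\theta)+g(\alpha^{-1}\sigma,\alpha^{-1}\theta)=g(\sigma,\alpha)$, which says nothing about how $\gamma^{-1}$ sits relative to $\theta$.

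The paper closes this gap differently and more directly. Rather than trying to prove $\gamma^{-1}$ refines $\theta$, it verifies $\eta\in C_{\theta^{-1}\sigma}(\theta^{-1}\sigma,\sigma)$ by a short genus computation: since $\theta^{-1}\sigma=\zeta$, one must check $g(\eta,\theta^{-1}\sigma)=g(\sigma,\theta)$. Expanding both $g(\eta,\zeta)$ and $g(\eta,\eta\zeta^{-1})$ via~(\ref{eq:genusdef}) shows they coincide (each equals $\frac{1}{2}(n-z(\zeta^{-1}\eta))$, using $z(\eta)=z(\zeta)=1$ and $z(\eta\zeta^{-1})=z(\zeta^{-1}\eta)$). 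Compatibility then gives $g(\eta,\theta^{-1}\sigma)=g(\eta,\eta\zeta^{-1})=g(\sigma,\alpha)=g(\sigma,\theta)$, and together with $g(\sigma,\eta)=0$ (which comes from $\eta\in C_{\alpha^{-1}\sigma}(\alpha^{-1}\sigma,\sigma)$) this places $\eta$ in $C_{\theta^{-1}\sigma}(\theta^{-1}\sigma,\sigma)$. Now Proposition~\ref{prop:vertextour} applied to $(\sigma,\theta)$ (rather than to $(\sigma,\alpha)$) supplies the hypercontraction sequence from $(\sigma,\theta)$ to $(\eta,\eta\zeta^{-1})$ directly. Note that this argument does yield, a posteriori, that $(\sigma,\gamma^{-1})$ is a spanning hypertree of $(\sigma,\theta)$ and hence that $\gamma^{-1}$ refines $\theta$ --- but the refinement emerges as a consequence of the membership $\eta\in C_{\theta^{-1}\sigma}(\theta^{-1}\sigma,\sigma)$, not as an independent input, and the route to that membership is the explicit genus identity above, not Theorem~\ref{thm:dhtree}.
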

\begin{proof}
Assume first that there is a sequence of topological hyperdeletions
$\delta_1,\ldots,\delta_{|\alpha^{-1}\sigma|-1}$ followed by a sequence
of hypercontractions $\gamma_1,\ldots,\gamma_{|\sigma|-1}$ taking
$(\sigma, \alpha)$ into $(\eta,\eta\zeta^{-1})$. Introducing
$\delta=\delta_1\cdots \delta_{|\alpha^{-1}\sigma|-1}$ and
$\gamma=\gamma_{|\sigma|-1}\cdots\gamma_1$, the hypermap obtained from
$(\sigma,\alpha)$ after performing these topological hyperdeletions and
hypercontractions is $(\gamma\sigma,\gamma\alpha\delta)$. Here
$\eta=\gamma\sigma$ and the equality $\eta\zeta^{-1}=\gamma\alpha\delta$
implies
$\zeta=(\eta\zeta^{-1})^{-1}\eta=(\gamma\alpha\delta)^{-1}\gamma\sigma=\delta^{-1}\alpha^{-1}\sigma$. Since
topological hyperdeletions and hypercontractions do not change the
genus, we have 
$g(\eta,\eta\zeta^{-1})=g(\sigma,\alpha)$ and $(\eta,\zeta)$ is a
compatible pair of circular permutations.

Conversely, assume that $(\eta,\zeta)$ is a
compatible pair of tours. By definition, $\zeta$ belongs
to $C_{\sigma}(\sigma,\alpha)$, and by Corollary~\ref{cor:MCP} there is
a spanning genus $g$ unicellular hypermap $(\sigma,\theta)$ of $(\sigma,\alpha)$
satisfying $\theta^{-1}\sigma=\zeta$. By
Proposition~\ref{prop:deletions} there is a sequence of topological
hyperdeletions 
$\delta_1,\ldots,\delta_{|\alpha^{-1}\sigma|-1}$ taking
$(\sigma,\alpha)$ into $(\sigma,\theta)$. (Note that the number of
topological hyperdeletions must be $|\alpha^{-1}\sigma|-1$ since each
hyperdeletion 
decreases the number of faces by one.) The hypermap $(\sigma,\theta)$
has the same genus as $(\sigma,\alpha)$. Furthermore we have
\begin{align*}
2g(\eta,\theta^{-1}\sigma)
&=n+2-z(\eta)-z(\theta^{-1}\sigma)-z((\theta^{-1}\sigma)^{-1}\eta)
=n+2-z(\eta)-z(\zeta)-z(\zeta^{-1}\eta)\\
&=n+2-z(\eta)-z((\eta\zeta^{-1})^{-1}\zeta)-z(\zeta^{-1}\eta)=2g(\eta,\eta\zeta^{-1}),\\ 
\end{align*}
which, by our assumption implies
$g(\eta,\theta^{-1}\sigma)=g(\sigma,\alpha)$. Since $\eta\in
C_{\alpha^{-1}\sigma}(\alpha^{-1}\sigma,\sigma)$ also implies 
$g(\eta,\sigma)=0$, we obtain that $\eta$ belongs to
$C_{\theta^{-1}\sigma}(\theta^{-1}\sigma,\sigma)$. By
Proposition~\ref{prop:vertextour} there is a sequence of contractions
taking $(\sigma,\theta)$ into $(\eta,\eta\zeta^{-1})$.  
\end{proof}  

A compatible pair of tours $(\eta,\zeta)$ of a hypermap
$(\sigma,\alpha)$ allows to create a {\em two-disk diagram
  $D(\sigma,\alpha,\eta,\zeta)$} of  the hypermap $(\sigma,\alpha)$ on
an appropriate surface of genus $g(\sigma,\alpha)$ as follows:
\begin{enumerate}
\item We create a diagram of the hypermap $(\eta,\eta\zeta^{-1})$ with
  noncrossing lines on a surface of genus
  $g(\eta,\eta\zeta^{-1})=g(\sigma,\alpha)$ as follows. We take two disjoint disks and we list the points on the
  boundaries of both, in the cyclic order of $\eta$, respectively
  $\zeta$. By the result of Jacques~\cite{Jacques} there is a surface of
  genus $g(\sigma,\alpha)$ on which the disks representing $\eta$ and
  $\zeta$ may be drawn in such a way that filling in the hyperedges
  of $\eta\zeta^{-1}$ results in no crossings. See also
  Remark~\ref{rem:finite} below. 
\item We draw the vertices of $(\sigma,\alpha)$ as a noncrossing
  partition inside the disk bounded by the vertex tour $\eta$.
\item We draw the faces of $(\sigma,\alpha)$ as a noncrossing partition
  inside the disk bounded by the face tour $\zeta$.     
\end{enumerate}  

An illustration of the process is shown in
Figure~\ref{fig:bernardi2}. The diagram is drawn on a torus, the dashed
circle in the middle is the vertex tour $\eta$. The circle representing
the face tour $\zeta$ appears as the union of four slanted dashed line
segments in the picture, due to the usual toric identifications on the
boundaries of the picture. The shaded regions in the interior of the
disk bounded by $\eta$ are the vertices, that is, the cycles of
$\sigma$. The shaded regions in the interior of the
disk bounded by $\zeta$  (appearing as the four corners in the picture)
are the faces, that is, the cycles of $\alpha^{-1}\sigma$. Note that the
unshaded regions inside the disk bounded by $\eta$ may be labeled by the
hypercontractions and the unshaded regions inside the disk bounded by
$\zeta$ may be labeled by the topological hyperdeletions.

\begin{remark}
\label{rem:finite}
The second and the third steps of the above process are obvious, the
reader might wonder why not just apply the result of
Jacques~\cite{Jacques} directly. By creating a
two-disk diagram we reduce the  challenge of drawing an arbitrary
hypermap to that of drawing a unicellular monopole $(\eta,\eta\zeta^{-1})$. The surface to be used does not change if we add or remove points
that are fixed points of the collection of hyperedges
$\eta\zeta^{-1}$. For the map shown in Figure~\ref{fig:bernardi2},
removing all the fixed points of $\eta\zeta^{-1}$ results in the map
$((3,6,9,12),(3,9)(6,12))$ on the set of points $\{3,6,9,12\}$, which,
after renumbering, is isomorphic to the map $((1,2,3,4),(1,3)(2,4))$. 
Hence, to classify the surfaces we need, we may
restrict our attention to unicellular monopoles $(\eta,\eta\zeta^{-1})$
satisfying not only $z(\eta)=z(\zeta)=1$ but also that $\eta\zeta^{-1}$ has no
fixed point, implying $1\leq z(\eta\zeta^{-1})\leq n/2$. A direct
consequence of~(\ref{eq:genusdef}) to such a hypermap is
$2g(\eta,\eta\zeta^{-1})+1\leq n\leq 4
g(\eta,\eta\zeta^{-1})$
for any positive genus. Thus the number of substantially different
two-disk diagrams of genus $g>0$ is finite: an infinite number of
possibilities arises only after adding some fixed points to
$\eta\zeta^{-1}$ and selecting a pair of noncrossing 
partitions inside the two disks.
For $g=1$ we get $3\leq n\leq 4$. Without loss of generality we may assume
$\eta=(1,2,\ldots,n)$, then the only two genus $1$ unicellular monopoles
$(\eta,\eta\zeta^{-1})$ to consider are $((1,2,3),(1,3,2))$ and
$((1,2,3,4),(1,3)(2,4))$. In particular, we may create the two-disk
diagram of any genus $1$ map by drawing $((1,2,3,4),(1,3)(2,4))$ to a
torus and then refining the picture by adding more points and a pair of
noncrossing partitions to the picture. 
\end{remark}

We conclude this section with a two-disk diagram of the hypermap
presented in Example~\ref{e:robert}. This time we consider
$\gamma=(1,2)(5,6)$ and $\theta=(1,2)(5,6)(7,8,9)$, yielding
$$\eta=\gamma\sigma=(1,4,7,2,6,9,3,5,8)\quad\mbox{and}\quad \zeta=\theta^{-1}\sigma=(1,4,7,9,3,5,7,2,6,8).$$
We have $\eta\zeta^{-1}=(7,8,9)$ hence the drawing process starts with
creating a diagram for
$(\eta,\eta\zeta^{-1})=((1,4,7,2,6,9,3,5,8),(7,8,9))$. After the removal
of the fixed points of $\eta\zeta^{-1}$ we need to find a two-disk
diagram of $((7,9,8),(7,8,9))$ on the set of points
$\{7,8,9\}$. Essentially this is the only example of a unicellular
hypermonopole of genus $1$ that is not a map. 

\begin{figure}[h]
%100%  
\begin{center}
\input{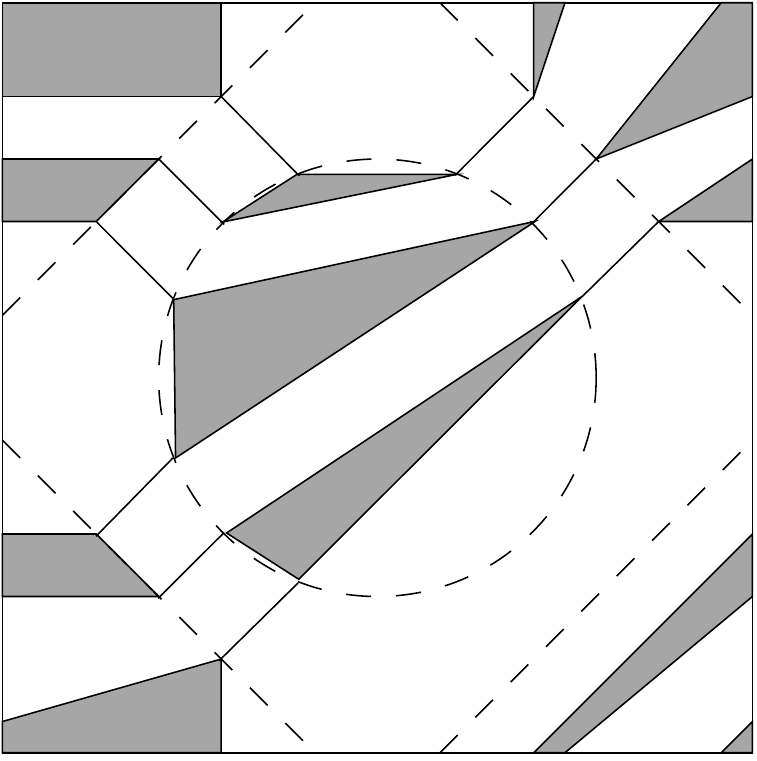_t}
\end{center}
\caption{A $2$-disk diagram of the hypermap presented in Example~\ref{e:robert}.}
\label{fig:robert2} 
\end{figure}

Just like in the previous example, the unshaded regions inside $\zeta$
and $\eta$, respectively, represent the hyperdeletions and
hypercontractions, respectively. Note furthermore that, after removing
the dashed curves representing $\zeta$ and $\eta$, the unshaded regions
represent the hyperedges. For example there is a contiguous region labeled
$(1,2,3)$ and its part inside $\eta$ corresponds to the hypercontraction
$(1,2)$. Another contiguous region is labeled $(4,5,6)$ and its part
inside $\zeta$ corresponds to the hyperdeletion $(4,6)$. In general, when we
write $(\eta,\eta\zeta^{-1})=(\gamma\sigma,\gamma\alpha\delta)$, the
cycles of $\gamma$ label the unshaded regions inside $\eta$ and the
cycles of $\delta$ label the unshaded regions inside $\zeta$.

\section{Deletion-contraction processes}
\label{sec:dcp}

In this section we generalize the definition of deletion-contraction
processes used to define the Tutte polynomial, from 
connected graphs to hypermaps. Our generalization is inspired by the following
definition of the Tutte polynomial:
\begin{enumerate}
\item We fix a numbering of the edges of the graph, and we consider all
  deletion-contraction processes in which we delete or contract each
  edge in decreasing order of their numbers.
\item We may freely choose an edge to be deleted or contracted, except
  for the following two restrictions:
\begin{enumerate}
\item  We must contract an edge if deleting it would
  disconnect a graph. Such edges are {\em internally active}.
\item We must delete an edge if it is a loop (hence
  contracting it would not be topological). Such edges are {\em externally active}. 
\end{enumerate}
\item For each such deletion-contraction process we take the variable $x$ to
  the power of the number of internally active edges and multiply it
  with the variable $y$ to the power of the number of externally active
  edges. The Tutte polynomial is the sum of the contributions of all
  deletion-contraction processes.  
\end{enumerate}
It has been shown by Tutte that the resulting polynomial is independent
of the numbering of the edges. Furthermore, the set of (topologically)
contracted edges must be a spanning tree of the graph, each spanning
tree, together with the fixed numbering uniquely defines the
deletion-contraction process: all edges which were not contracted must
be deleted. 

This definition of a Tutte polynomial is immediately applicable to maps
as they are connected graphs with some additional topological structure,
which we may ignore. The only ``topological'' part of the definition of
the Tutte polynomial is that we disallow non-topological
hypercontractions which may be defined in abstract terms by disallowing
the contractions of loops. Furthermore, the final monopole with no edges
{\em depends on the set of contracted edges only}, not the order in which the
deletions and contractions are performed. The order matters only in the
definition of the activities. 

A plausible generalization of the above definition of a
deletion-contraction process to a hypermap 
$(\sigma,\alpha)$  is the 
following. We consider all sequences of (arbitrary) hyperdeletions and
topological hypercontractions which take hypermaps into hypermaps,
result in a hypermap $(\gamma\sigma,\gamma\alpha\delta)$ where
$z(\gamma\sigma)=1$ (at the end we have a single vertex) and
$\gamma\alpha\delta$ is the identity (``we have no hyperedges''). 

\begin{definition}
Given a hypermap $(\sigma,\alpha)$, a {\em deletion-contraction process}
is a sequence of hyperdeletions and topological hypercontractions
resulting in a hypermap $(\gamma\sigma,\gamma\alpha\delta)$ where
$z(\gamma\sigma)=1$  and $\gamma\alpha\delta$ is the identity. Each
hyperdeletion and hypercontraction is induced by a transposition, which
we call the {\em transposition underlying } the operation, and 
we call the set of all these transpositions the {\em set of
  underlying transpositions of  the deletion-contraction process}. 
\end{definition}  

\begin{definition}
Given a hypermap $(\sigma,\alpha)$ and a deletion-contraction process
for it, the {\em graph $G$ of underlying transpositions}  of the process
is the graph, whose vertices are the cycles of
$\sigma$ and whose edges are the underlying transpositions
$(u,v)$.
\end{definition}  

Note that the graph of underlying transpositions of a deletion-contraction
process depends only on the set of underlying transpositions of the process,
which set may be uniquely reconstructed from the graph. The graph $G$ is
not a map, as several underlying transpositions $(u,v)$ may involve the
same point $u$, but it is a {\em topological graph}, that is, a graph
whose edges may be cyclically ordered around each vertex
$(u_1,u_2,\ldots,u_k)$ as follows:
$$
\begin{array}{l}
(u_1,v_{1,1}),(u_1,v_{1,2}),\ldots,(u_1,v_{1,\ell_1}),\\
(u_2,v_{2,1}),(u_2,v_{2,2}),\ldots,(u_2,v_{2,\ell_2}), \ldots\\
(u_k,v_{k,1}),(u_k,v_{k,2}),\ldots,(u_1,v_{k,\ell_k}). 
\end{array}
$$
Here, for each $i\in\{1,2,\ldots,k\}$ the cycle
$(u_i,v_{i,1},v_{i,2},\ldots,v_{i,\ell_i})$ is the cycle of
$\alpha^{-1}$ containing the point $u_i$. Note that when we draw a hypermap
$(\sigma,\alpha)$ of genus zero following the conventions of
Figure~\ref{fig:hypermap}, we obtain a counterclockwise list of all
edges incident to the vertex $(u_1,u_2,\ldots,u_k)$.
Next we state some necessary conditions.

\begin{proposition}
\label{prop:sut}
Given a hypermap $(\sigma,\alpha)$, the set of underlying transpositions
of a deletion-contraction process must satisfy the following criteria:
\begin{enumerate}
\item For each transposition $(u,v)$ the points $u$ and $v$ belong to
  the same cycle of $\alpha$.
\item For each cycle $\alpha_i$ of $\alpha$, the transpositions swapping
  two points permuted by $\alpha_i$ form a tree on the set of points permuted
  by $\alpha_i$. This tree has non-crossing edges if we draw the points
  of $\alpha_i$ in cyclic order on a cycle.  
\end{enumerate}
\end{proposition}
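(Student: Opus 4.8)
The plan is to track how hyperdeletions and topological hypercontractions
affect the permutation $\alpha$ and its cycles, and to extract the two
stated properties from the bookkeeping. The key observation is that every
operation in a deletion-contraction process — whether a hyperdeletion
$(\sigma,\beta)\mapsto(\sigma,\beta\delta)$ or a topological hypercontraction
$(\sigma,\beta)\mapsto(\gamma\sigma,\gamma\beta)$ — uses a transposition
$\tau=(u,v)$ that \emph{disconnects} the current hyperedge permutation
$\beta$, i.e.\ $u$ and $v$ lie in the same cycle of $\beta$. Since we start
with $\beta=\alpha$ and end with $\gamma\alpha\delta=\mathrm{id}$, and since
by Serret's lemma each such $\tau$ increases $z(\beta)$ by exactly one, the
total number of underlying transpositions is exactly $n-z(\alpha)$, which is
$\sum_i(n_i-1)$ where $n_i$ is the length of the cycle $\alpha_i$. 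This is
precisely the number of edges needed for the union of trees in property (2),
so a counting argument will let us upgrade ``forest'' to ``spanning forest
with one tree per cycle.''

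First I would prove property (1). By the remark following
Lemma~\ref{lem:deletion_contraction} (the ``In analogy to~\cite[Lemma
3]{Machi}'' lemma), after each hyperdeletion the new hyperedge permutation
$\beta\delta$ is a refinement of the old one $\beta$, and after each
topological hypercontraction $\gamma\beta$ is a refinement of $\beta$. By
transitivity of refinement (a cycle of a refinement of a refinement is
contained in a cycle of the original), at every stage the current hyperedge
permutation is a refinement of $\alpha$; in particular every cycle of every
intermediate permutation is contained in a cycle of $\alpha$. When we apply
a transposition $\tau=(u,v)$ at some stage, $u$ and $v$ lie in a common cycle
of the current permutation, hence in a common cycle of $\alpha$. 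This gives
condition (1), and moreover shows that the transpositions ``belonging'' to a
fixed cycle $\alpha_i$ only ever act among the $n_i$ points permuted by
$\alpha_i$, and their action on those points is independent of what happens
on the other cycles.

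Next I would prove property (2). Restricting attention to the $n_i$ points of
a single cycle $\alpha_i$, the subprocess consists of transpositions, each
disconnecting the current (restricted) permutation, starting from the single
cycle $\alpha_i$ and — since $\gamma\alpha\delta$ is the identity — ending
at the identity on these $n_i$ points. Each transposition increases the
number of cycles by one, from $1$ up to $n_i$, so there are exactly $n_i-1$
of them and they form a connected spanning subgraph of the $n_i$ points (a
standard fact: a sequence of transpositions whose product is an $n_i$-cycle
and whose length is $n_i-1$ must be connected), hence a spanning tree. For
the noncrossing claim, the cleanest route is to invoke the result of
Cori~\cite[Theorem 1]{Cori} already quoted in the preliminaries: since
$\alpha_i$ is a circular permutation and each operation keeps the restricted
permutation a refinement of $\alpha_i$, at every stage the cycles of the
restricted permutation form a noncrossing partition with respect to the
cyclic order of $\alpha_i$; a transposition $(u,v)$ that refines a
noncrossing partition (splitting one block into two noncrossing blocks) must
join $u$ and $v$ that are ``visible'' across that block, and the resulting
set of $n_i-1$ chords is exactly a noncrossing spanning tree. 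I expect the
main obstacle to be this last point — pinning down precisely why successive
refinements of a noncrossing partition, recorded as chords, produce a
\emph{noncrossing} tree rather than merely a tree — since one must rule out
a later chord crossing an earlier one; the resolution is that once two points
are separated into different blocks of a noncrossing partition, no subsequent
refining transposition can pair them or straddle them, so the chords are
nested/parallel by construction.
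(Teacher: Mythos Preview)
Your proof is correct and tracks the paper's argument closely: both derive (1) from the observation that every intermediate hyperedge permutation is a refinement of $\alpha$, and both obtain the noncrossing part of (2) from the fact that each disconnecting transposition splits a cycle of $\alpha_i$ into two cyclically consecutive arcs, so that later chords can never straddle earlier ones. The one place you diverge slightly is in showing the transpositions on each $\alpha_i$ form a tree: the paper argues ``no transposition reconnects, hence the graph is a forest'' and then uses the edge count $n_i-1$, whereas you argue ``the product is an $n_i$-cycle, hence the graph is connected'' and use the same count---for your route to go through cleanly you should note that $\delta_i\gamma_i=\alpha_i^{-1}$ exhibits all $n_i-1$ transpositions as a one-sided factorization of a full cycle, since in the process they are applied on both sides of $\alpha_i$.
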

\begin{proof}
Each transposition underlying a hyperdeletion or hypercontraction must
disconnect the current permutation representing the hyperedges, hence
each transposition $(u,v)$ must have both $u$ and $v$ in the same cycle
of the original permutation $\alpha$. Since no underlying transposition
is allowed to reconnect already disconnected cycles, the set of
transpositions swapping two points permuted by $\alpha_i$ must form the
edges of a cycle-free graph. Since, by the end of the process, the cycle
$\alpha_i$ must be completely disconnected, the set of underlying
transpositions swapping points permuted by $\alpha_i$ must form a tree on
all points permuted by $\alpha_i$. The non-crossing property may be shown
by induction on the number of points, keeping in mind that the first
transposition applied disconnects $\alpha_i$ into two cycles of
cyclically consecutive points. 
\end{proof}

\begin{definition}
\label{def:ltl}  
Given a hypermap $(\sigma,\alpha)$, we call the topological graph $G$ on
the vertex set of the cycles of $\sigma$, and indexed by ordered pairs of
points of the hypermap, {\em locally tree-like} if the set of
transpositions constituting the edge set of $G$ satisfy the criteria stated in
Proposition~\ref{prop:sut}. 
\end{definition}

The next necessary condition is on the subsets of underlying transpositions
that may represent the hypercontractions in a deletion-contraction
process.

\begin{proposition}
\label{prop:ctree}  
Let $(\sigma,\alpha)$ be a hypermap and let $G$ be the graph of
underlying transpositions for a deletion-contraction process. Consider
the subgraph $T$ of $G$ whose edges are the transpositions representing
hypercontractions. Then $T$ must be a spanning tree of $G$ and must
satisfy the following criterion: given any point $u$, and the cycle
$(u,v_1,v_2,\ldots,v_k)$ of $\alpha^{-1}$ containing $u$, the transpositions
representing hypercontractions on the list $(u,v_1),(u,v_2),\ldots,(u,v_k)$ must
precede the transpositions representing hyperdeletions. 
\end{proposition}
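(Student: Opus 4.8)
The plan is to prove both assertions of Proposition~\ref{prop:ctree} --- that $T$ is a spanning tree of $G$, and that along each ``wheel'' $(u,v_1),\ldots,(u,v_k)$ coming from a cycle of $\alpha^{-1}$ the hypercontractions precede the hyperdeletions --- by tracking which permutations stay fixed under which operations. First I would recall the bookkeeping from Section~\ref{sec:hyperdc}: a hyperdeletion multiplies $\alpha$ on the right by a transposition and leaves $\sigma$ (the vertices) unchanged, whereas a hypercontraction multiplies both $\sigma$ and $\alpha$ on the left by a transposition. Since the process ends at a hypermap $(\gamma\sigma,\gamma\alpha\delta)$ with $z(\gamma\sigma)=1$, and $\sigma$ is only ever modified by the left-multiplications coming from hypercontractions, the transpositions in $T$ must reduce $z(\sigma)$ from its initial value down to $1$. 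Each topological hypercontraction decreases $z(\sigma)$ by exactly one (Serret's lemma), so $|T|=z(\sigma)-1$; combined with Proposition~\ref{prop:sut}, which already tells us the full edge set $G$ is locally tree-like, and with the fact that no hypercontraction may reconnect two cycles of $\sigma$ that were already separated, $T$ is acyclic on the vertex set of cycles of $\sigma$; an acyclic subgraph with $z(\sigma)-1$ edges on $z(\sigma)$ vertices that connects them (connectivity forced because the terminal $\gamma\sigma$ has a single cycle) is a spanning tree. The one point requiring care here is that a hypercontraction in the \emph{middle} of the process acts on the current vertex permutation $\gamma_{i-1}\cdots\gamma_1\sigma$, not on $\sigma$ itself; but $\gamma_{i}$ connects the current vertex permutation if and only if its two endpoints lie in different cycles of $\sigma$ that have not yet been merged, so the tree structure on the cycles of $\sigma$ is exactly what is being built up.

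The second, and harder, half is the ordering condition along a wheel. The key observation is what a hyperdeletion versus a hypercontraction does to the \emph{face} permutation, which starts as $\alpha^{-1}\sigma$ and, after the operations accumulated so far, is $\delta^{-1}\cdots\alpha^{-1}\gamma^{-1}\cdots\sigma$ --- concretely, a hyperdeletion by $\delta_j$ replaces the current face permutation $\varphi$ by $\delta_j\varphi$ (left multiplication), whereas a topological hypercontraction by $\gamma_i$ replaces $\varphi=\gamma\alpha\delta$-type permutation... here I would instead argue via the cyclic structure around the point $u$. The list $(u,v_1),\ldots,(u,v_k)$ reads off, in the cyclic order of the cycle of $\alpha^{-1}$ through $u$, the edges of $G$ incident to $u$; by the local tree-likeness (Proposition~\ref{prop:sut}(2)) these $k$ edges, together with the relevant chunks of other wheels, form a noncrossing tree on the points permuted by the cycle $\alpha_i$ of $\alpha$ containing $u$. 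The claim is that once a hyperdeletion has been performed on some $(u,v_j)$ of this wheel, no later operation on a subsequent edge $(u,v_{j'})$ of the same wheel (with $j'$ after $j$ in the cyclic list, still before any wrap to a hyperdeletion) can be a hypercontraction. I would prove this by showing that performing the hyperdeletion $(u,v_j)$ changes the structure of the current hyperedge permutation $\alpha$-descendant so that $v_{j'}$ is no longer in the same cycle as $u$ in a way that a topological hypercontraction could exploit --- more precisely, a hypercontraction by $(u,v_{j'})$ would have to \emph{disconnect} the current hyperedge permutation and \emph{connect} the current vertex permutation, and I would verify that after the hyperdeletion on $(u,v_j)$ the point $v_{j'}$ has been split off into a component of the hyperedge permutation in which $u$ no longer lies, so $(u,v_{j'})$ can no longer serve as a disconnecting transposition for the hyperedges. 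This is where the noncrossing wheel structure is essential: the hyperdeletion on $(u,v_j)$ ``cuts'' the cycle of cyclically consecutive points exactly between the portion containing the $v_{j'}$ with $j'>j$ and the portion containing $u$.

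I expect the main obstacle to be making the interleaving argument precise: one has to keep simultaneous track of three evolving permutations (vertices, hyperedges, faces), and show that the \emph{cyclic} order on the incident edges at $u$ inherited from $\alpha^{-1}$ --- which is what the list $(u,v_1),\ldots,(u,v_k)$ encodes --- is precisely the order in which the splits of the hyperedge cycle occur, so that the ``contractions first, then deletions'' pattern along each wheel is forced. The cleanest route is probably an induction on the length of the deletion-contraction process, reducing to the first operation: show directly that the first operation touching the wheel at $u$ cannot be a hyperdeletion unless \emph{all} hypercontractions at $u$ on earlier edges of the wheel were already done --- but since the wheel is read cyclically starting at $(u,v_1)$ there is no ``earlier'' except via wrap-around, and Proposition~\ref{prop:sut}(2) together with the fact that the first transposition applied to a cycle of $\alpha$ splits it into two arcs of consecutive points pins down which sub-arc each remaining $v_{j'}$ falls into. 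After the first split the two arcs are independent cycles, so one applies the induction hypothesis to each arc separately. The remaining routine checking --- that the vertex-side connectivity needed for a hypercontraction is unaffected by which sub-arc we are in --- follows from the first half of the proof, namely that the vertex permutation is only altered by hypercontractions and these build a tree on the cycles of $\sigma$.
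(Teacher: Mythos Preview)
Your argument for the first half---that $T$ is a spanning tree of $G$---is correct and matches the paper's reasoning, though you supply more detail than the paper does.

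For the second half, the paper takes a much shorter route than you do: it observes that $\gamma_i\alpha_i\delta_i=\mathrm{id}$ forces $\alpha_i^{-1}=\delta_i\gamma_i$, and that this is a \emph{minimal} transposition factorization of the cycle $\alpha_i^{-1}$ (there are exactly $|\alpha_i|-1$ transpositions, by Proposition~\ref{prop:sut}). The paper then invokes the theorem of Goulden and Yong~\cite[Theorem~2.2]{Goulden-Yong}: in any minimal factorization of a cycle, at each point $u$ the transpositions moving $u$ appear in the factorization in the cyclic order of that cycle. Since the $\gamma_i$-factors sit to the right of the $\delta_i$-factors in $\delta_i\gamma_i$, the hypercontractions at $u$ are exactly the ones with smaller index in the list $(u,v_1),\ldots,(u,v_k)$. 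That is the whole argument.

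Your approach tries to obtain this directly, without Goulden--Yong, by tracking how each operation splits the current hyperedge cycle. There is a genuine gap. Your central claim---that after a hyperdeletion on $(u,v_j)$, no \emph{temporally later} operation on $(u,v_{j'})$ with $j'>j$ can be a hypercontraction---is true (in fact no later operation on such $(u,v_{j'})$ is possible at all, since $v_{j'}$ is no longer in the cycle containing $u$). But this is a statement about time order, whereas the proposition is about \emph{list order}. You still need to exclude the scenario in which a hypercontraction on $(u,v_{j'})$ with $j'>j$ is performed \emph{before} the hyperdeletion on $(u,v_j)$. This case is also impossible, for the symmetric reason (the hypercontraction separates $v_j$ from $u$, so $(u,v_j)$ cannot be used afterward), but you do not say so, and more importantly both arguments become delicate once other transpositions on the same cycle $\alpha_i$, not incident to $u$, are interleaved: the ``after the split, $v_{j'}$ is no longer with $u$'' picture assumes the cycle is still intact when $(u,v_j)$ is applied. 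Your proposed induction on process length would, if carried through carefully, amount to reproving the local ordering statement of Goulden--Yong; that is feasible, but it is real work, and your sketch does not yet establish why the two sub-arcs created by the first split carry \emph{all} remaining edges of $G$ on that cycle (they do, but one has to combine the noncrossing tree property from Proposition~\ref{prop:sut} with the fact that every remaining transposition must disconnect the current hyperedge permutation and hence lie within one arc).
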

\begin{proof}
The subgraph $T$ has to be a spanning tree to assure that
the hypermap $(\gamma\sigma,\gamma\alpha\delta)$ obtained at the end of
a vertex has a single vertex and that all hypercontractions performed
are topological. The second criterion is a direct consequence of a result of
Goulden and Yong~\cite[Theorem 2.2]{Goulden-Yong}. This states that when 
we write a cyclic permutation as a minimal composition of
transpositions, then these transpositions must form the edges of a
noncrossing tree on the points represented on a cycle in a clockwise
order, and around each point the transpositions moving that point must
be performed in a counterclockwise order. Conversely, any sequence of
transpositions satisfying the above conditions is a factoring of the
cyclic permutation. Introducing $\gamma_i$
respectively $\delta_i$ to denote the product of transpositions
underlying the hypercontractions, respectively hyperdeletions acting on
the set of points permuted by the cycle $\alpha_i$ of $\alpha$, the permutation
$\gamma_i\alpha_i\delta_i$ must be the identity permutation,
equivalently we must have $\alpha_i^{-1}=\delta_i\gamma_i$. Note
that the transpositions constituting $\delta_i$ must be
all after the transpositions constituting $\gamma_i$.
\end{proof}

\begin{definition}
\label{def:dcm}  
Given a locally tree-like topological graph $G$ associated to a hypermap
$(\sigma,\alpha)$, we call a spanning tree $T$  {\em allowable} if it has
the property stated in Proposition~\ref{prop:ctree}. We say that
the topological graph is a
{\em deletion-contraction graph of $(\sigma,\alpha)$} if it is locally
tree-like and has an allowable spanning tree. We will use the notation
$(\sigma,\alpha)\models G$ to indicate that $G$ is a
deletion-contraction graph for $(\sigma,\alpha)$. 
\end{definition}

Definition~\ref{def:dcm} is justified by the next theorem, stating  that
the criteria listed in Propositions~\ref{prop:sut} and \ref{prop:ctree}
are also sufficient.

\begin{theorem}
\label{thm:mut}  
Given a hypermap $(\sigma,\alpha)$, a topological graph $G$ is the
map of underlying transpositions of a deletion-contraction process if
and only if $(\sigma,\alpha)\models G$ holds. Furthermore, all
deletion-contraction processes with the same graph of underlying
transpositions are fully characterized by the following criteria:
\begin{itemize}
\item[(i)] The edges of $G$ underlying a hypercontraction form an 
 allowable spanning tree of $G$.
\item[(ii)] For each cycle $\alpha_i$ of $\alpha$, if we list the points
  permuted by $\alpha_i$ in clockwise order, at each point $u$ the
  hyperdeletions whose underlying transposition moves $u$ are performed in
  counterclockwise order and the hypercontractions
  whose underlying transposition moves $u$ are performed in
  clockwise order.
\end{itemize}  
\end{theorem}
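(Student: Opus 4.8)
The plan is to prove both directions by reducing everything to the combinatorics of factoring each cycle $\alpha_i$ of $\alpha$ into transpositions, using the Goulden--Yong theorem as the main engine exactly as in the proof of Proposition~\ref{prop:ctree}. First I would establish the ``only if'' direction, which is essentially already done: if $G$ is the graph of underlying transpositions of a deletion-contraction process, then Proposition~\ref{prop:sut} shows $G$ is locally tree-like, and Proposition~\ref{prop:ctree} shows that the edges underlying hypercontractions form an allowable spanning tree; hence $(\sigma,\alpha)\models G$. This same reasoning shows criterion (i) is necessary. Criterion (ii) is the Goulden--Yong condition applied to the factorization of each $\alpha_i$: writing $\gamma_i$ for the product of hypercontraction transpositions and $\delta_i$ for the product of hyperdeletion transpositions moving points of $\alpha_i$, the identity $\alpha_i^{-1}=\delta_i\gamma_i$ together with the fact that (in our right-to-left convention) the hypercontractions are performed before the hyperdeletions means the \emph{combined} sequence of transpositions forms a minimal factorization of $\alpha_i^{-1}$; Goulden--Yong then forces, around each point $u$, that all transpositions moving $u$ occur in a fixed cyclic order, and since the hypercontractions all precede the hyperdeletions this splits into the clockwise/counterclockwise prescription of (ii). (One must be a little careful about the direction of ``clockwise'' versus the left-to-right vs.\ right-to-left multiplication convention; I would pin this down once, early, and then use it consistently.)

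For the ``if'' direction --- the substantive content of the theorem --- suppose $(\sigma,\alpha)\models G$, so $G$ is locally tree-like with an allowable spanning tree $T$. I would first fix, for each cycle $\alpha_i$, the allowable subtree $T_i=T\cap\alpha_i$ and an arbitrary ordering of the edges of $G$ restricted to $\alpha_i$ that obeys criterion (ii); I must then argue this ordering can be realized by an actual deletion-contraction process, i.e.\ that at each step the transposition we wish to apply genuinely \emph{disconnects} the current hyperedge permutation (so it is a legal hyperdeletion or hypercontraction), that each declared hypercontraction is \emph{topological} (connects the current vertex permutation), and that the whole thing leaves us at a hypermap at every stage (transitivity is preserved), ending with $z(\gamma\sigma)=1$ and $\gamma\alpha\delta=\mathrm{id}$. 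The disconnection property at each step is automatic from the tree structure: since the transpositions within each $\alpha_i$ form a noncrossing tree, applying them one at a time always splits a cycle into two (this is the induction already used in Proposition~\ref{prop:sut}), and there is no interaction between different $\alpha_i$'s since distinct cycles involve disjoint points. That $\gamma\alpha\delta$ is the identity follows cycle-by-cycle from $\alpha_i^{-1}=\delta_i\gamma_i$, which holds precisely because the full transposition sequence for $\alpha_i$, read in the process order, is a minimal factorization of $\alpha_i^{-1}$ --- here I invoke the \emph{converse} half of Goulden--Yong, that any noncrossing-tree sequence with the correct local cyclic order \emph{is} such a factorization.

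The main obstacle I anticipate is the interleaving of the topological hypercontractions with the hyperdeletions \emph{across different hyperedges}, together with the requirement that each hypercontraction be topological. Within a single $\alpha_i$ the order is rigidly prescribed by (ii), but when we merge the steps belonging to different cycles $\alpha_1,\dots,\alpha_k$ into one global sequence we must verify two things: (a) that we may always schedule the topological hypercontractions first (this is forced by Proposition~\ref{prop:ctree}, whose proof shows $\delta_i$ comes after $\gamma_i$, but globally we need to see that \emph{all} contractions can be grouped before \emph{all} deletions, which works because deletions leave $\sigma$ untouched and contractions leave the faces untouched, so the two groups commute in the appropriate sense); and (b) that when we perform the contractions, the subgraph $T$ being a \emph{spanning tree} of $G$ --- whose vertex set is the set of cycles of $\sigma$ --- guarantees each contraction connects two currently-distinct vertices, so it is topological, and the final vertex permutation has exactly one cycle. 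Assembling (a) and (b) carefully, and checking transitivity is never lost along the way (it cannot be, since at the end we have a connected monopole and transitivity only improves as we contract), is where the real work lies; the rest is bookkeeping with Serret's lemma and the genus formula~\eqref{eq:genusdef}.
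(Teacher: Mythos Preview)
Your necessity argument and your derivation of criteria (i) and (ii) match the paper's. The gap is in the sufficiency direction. You write ``fix \ldots\ an arbitrary ordering of the edges of $G$ restricted to $\alpha_i$ that obeys criterion (ii)'' but never establish that such an ordering exists; the paper isolates exactly this point as a separate combinatorial lemma (Lemma~\ref{lemma:nctrees}): given any noncrossing tree on cyclically ordered points, one may number its edges so that at each point the incident edges appear in increasing clockwise order. Without this lemma the converse half of Goulden--Yong has nothing to act on. Relatedly, your claim that ``the disconnection property at each step is automatic from the tree structure \ldots\ (this is the induction already used in Proposition~\ref{prop:sut})'' is not correct as stated: Proposition~\ref{prop:sut} deduces the noncrossing property \emph{from} a given deletion-contraction process, not the converse, and applying the edges of a noncrossing tree in the wrong order can reconnect cycles (e.g.\ on $(1,2,3,4)$, apply $(1,3)$ on the left and then $(1,4)$).

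Your proposed global schedule --- perform all hypercontractions first, then all hyperdeletions --- also differs from the paper, which instead processes the cycles $\alpha_1,\alpha_2,\ldots,\alpha_m$ one at a time, numbering the edges within each $\alpha_i$ via Lemma~\ref{lemma:nctrees} and then reversing the numbering on the contraction edges. Your schedule has the nice feature that after all contractions one sits at a monopole, so transitivity through the remaining deletions is automatic; but you still owe the argument that within each $\alpha_i$ the contraction transpositions alone (a sub-forest of the noncrossing tree, not the full tree) can be ordered so that each step disconnects the current hyperedge, and then likewise for the deletions. This can be done --- it reduces to applying Lemma~\ref{lemma:nctrees} block-by-block to the noncrossing partition $\gamma_i^{-1}$ and its Kreweras dual $\delta_i^{-1}$, exactly the picture in Theorem~\ref{thm:mut0} --- but the remark that ``the two groups commute in the appropriate sense'' does not by itself carry this load.
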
   
\begin{proof}
By Propositions~\ref{prop:sut} and \ref{prop:ctree},
$(\sigma,\alpha)\models G$ is a necessary 
condition. To prove its sufficiency, it suffices to prove the stated
characterization of deletion-contraction processes and then show that the
hyperdeletions and hypercontractions may be performed in such order that
the conditions (i) and (ii) are satisfied.

Condition (i) is not only a necessary but also a sufficient condition to
assure that the hypermap $(\gamma\sigma,\gamma\alpha\delta)$ obtained at
the end of a vertex has a single vertex and that all hypercontractions performed
are topological. Condition (ii) is a direct consequence of the already
cited result of Goulden and Yong~\cite[Theorem
  2.2]{Goulden-Yong}. As pointed out in the proof of
Proposition~\ref{prop:ctree}, we must have
$\alpha_i^{-1}=\delta_i\gamma_i$ for each cycle $\alpha_i$ of $\alpha$
and this equation holds if and only if the transpositions constituting
$\delta_i\gamma_i$ are performed in in the cyclic order of
$\alpha_i^{-1}$ around each point moved by $\alpha_i$. The order in
which we apply the hypercontractions is the opposite to the order in
which the corresponding transpositions are composed to obtain
$\gamma_i^{-1}$ whereas the hyperdeletions are applied in the same order
as the corresponding transpositions are composed to obtain
$\delta_i^{-1}$.

We are left to show that for every map $G$ satisfying
$(\sigma,\alpha)\models G$ there is a deletion-contraction process whose
underlying graph of transpositions is $G$. First we select any spanning
tree of $G$ then we number the cycles of $\alpha$ in some order:
$\alpha_1$, $\alpha_2$, \ldots, $\alpha_m$. We will first number all
edges of $G$ contained in $\alpha_1$, then the ones
contained in $\alpha_2$, and so on. Hence all edges of $G$ contained in
the same $\alpha_i$ will be labeled consecutively. Let us represent the
points permuted by $\alpha_i$ on a circle in clockwise order. We only need
to show that the edges of this tree can be numbered in such a way that
for each point $u$ the numbers of all edges containing $u$ increase in
clockwise order. We may then reverse the numbering on the transpositions
underlying to hypercontractions and use the resulting numbering to label
the edges of $G$ contained in the set of points permuted by
$\alpha_i$. Executing all hyperdeletions and hypercontractions in
decreasing order of the labels satisfies the conditions (i) and (ii). 

The conclusion of the proof is a direct consequence of
Lemma~\ref{lemma:nctrees} below.
\end{proof}

\begin{lemma}
\label{lemma:nctrees}  
Given a set of $m$ points on a circle, numbered in clockwise order, and
a noncrossing tree on this set of points one may number the edges of
this tree in such a way that for each point $u$ the edges incident to
$u$ are numbered on increasing clockwise order. 
\end{lemma}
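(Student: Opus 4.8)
The plan is to prove this by induction on $m$, the number of points on the circle. For $m=1$ (a single point, no edges) or $m=2$ (a single edge) the statement is trivial. For the inductive step, I would exploit the recursive structure of noncrossing trees: in a noncrossing tree on points $1,2,\ldots,m$ arranged in clockwise order, consider the edge(s) incident to the point labeled $1$. The key structural observation is that a noncrossing tree decomposes along a ``boundary'' point into smaller noncrossing trees on consecutive arcs of points.

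The cleanest approach is the following. Pick a leaf of the tree, say the point $u$, which is incident to a unique edge $e=\{u,w\}$. Removing $u$ (and $e$) leaves a noncrossing tree on $m-1$ points, still in clockwise order; by the inductive hypothesis its edges can be numbered $1,\ldots,m-2$ so that at each point the incident edges increase clockwise. Now I must insert $e$ with the largest label $m-1$ and check the clockwise-increasing condition is preserved. At $u$ there is nothing to check since $e$ is the only edge. At $w$, however, $e$ must be the clockwise-last among the edges incident to $w$, which is not automatic — so choosing an arbitrary leaf does not work. The fix is to choose the leaf carefully: among all leaves, pick the one, call it $u$, such that walking clockwise from $u$ the edge $e=\{u,w\}$ goes to the ``clockwise-nearest'' reachable vertex in the appropriate sense; more precisely, pick $u$ so that in the cyclic arc strictly between $u$ and $w$ (on the side not containing the rest of the tree's attachment) there are no points, i.e. $e$ connects $u$ to a cyclically adjacent ``active'' vertex. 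Because the tree is noncrossing, such an ``extremal'' leaf always exists: take the edge $e=\{u,w\}$ spanning the shortest clockwise arc containing no other vertices of the tree in its interior; one of its endpoints, say $u$, is then necessarily a leaf (any other edge at $u$ would have to cross $e$ or be contained in the empty arc), and $e$ is clockwise-last at $w$ among edges currently present because everything ``between'' them has been consumed. Giving $e$ the top label and recursing on the remaining tree (re-indexing the $m-1$ remaining points) gives the numbering.

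A slightly more symmetric alternative, which may be easier to write cleanly, is to number the edges in the order one encounters them during a clockwise ``tour'' of the tree (a depth-first traversal of the planar tree that always turns in the clockwise direction at each vertex). Each edge is traversed twice in such a tour; assign to each edge the label equal to the step number of its \emph{first} traversal, then sort these first-traversal times to get labels $1,\ldots,m-1$. Because the tour respects the clockwise cyclic order of edges around every vertex, the edges incident to any fixed point $u$ are first-visited in precisely their clockwise cyclic order (starting from wherever the tour first reaches $u$) — and since $u$ is reached for the first time along one particular edge, and a noncrossing tree is acyclic so there is no ``wraparound'' issue, this first-visit order around $u$ is genuinely increasing in clockwise order when read from the entry edge. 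One must check the entry edge at $u$ is also consistent; for the root this requires starting the tour along the clockwise-first edge, and for every non-root vertex it is automatic.

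**Main obstacle.**
I expect the crux to be the bookkeeping in either argument: in the induction, proving that the extremal edge $e$ is both a leaf edge \emph{and} clockwise-maximal at its other endpoint (this uses the noncrossing hypothesis essentially, and needs care about what ``the empty arc side'' means); in the tour argument, verifying that the first-visit order around each vertex is exactly the clockwise order \emph{including the entry edge}, i.e. that there is no off-by-one rotation. Both are genuinely finite combinatorial checks rather than deep facts, but they are the place where a careless write-up would go wrong, so the proof should isolate the ``extremal leaf exists'' claim (or the ``tour visits edges at $u$ in clockwise order'' claim) as an explicit sublemma before invoking induction.
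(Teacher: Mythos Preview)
Your two approaches are both viable and can be made to work, but they differ from the paper's argument in an instructive way. The paper also proceeds by induction on $m$ and also begins by picking a leaf, but it picks an \emph{arbitrary} leaf (rotated so that the leaf vertex sits at position $m$, with its unique edge $(i,m)$). The point you worried about---that the new edge need not be clockwise-extremal at the non-leaf endpoint $i$---is handled not by choosing the leaf carefully, but by observing that the noncrossing hypothesis forces every remaining edge to lie either entirely within $\{1,\ldots,i\}$ or entirely within $\{i,i+1,\ldots,m-1\}$. Thus deleting $(i,m)$ leaves \emph{two} noncrossing subtrees $T_1$ and $T_2$ sharing only the vertex $i$; one recurses on each separately and then numbers the edges of $T_1$ first, then $(i,m)$, then the edges of $T_2$. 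At vertex $i$ the three blocks occur in the correct cyclic order by construction, and at every other vertex only one of $T_1,T_2$ is present, so induction finishes the job.

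Compared with your first approach, the paper's two-subtree recursion sidesteps the need to prove the ``extremal leaf exists'' sublemma (your description of that leaf is a bit hazy---all $m$ points are tree vertices, so ``shortest arc containing no other vertices'' needs reformulating---though the underlying idea can be salvaged). Compared with your second approach, the paper's argument is more elementary in that it avoids setting up the planar tour and checking the entry-edge consistency; on the other hand, your tour argument, once the bookkeeping is nailed down, gives an explicit global description of the numbering rather than a recursive one.
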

\begin{proof}
We proceed by induction on the number of points.
Consider a leaf of the tree. By cyclic rotation of the numbering, if
necessary, we may assume that this leaf is $(i,m)$ and that the point
numbered $m$ is not contained in any other edge. Deleting this edge
from the tree results in a pair of trees: a tree $T_1$ on the set
$\{1,2,\ldots,i-1,i\}$ and a tree $T_2$ on the set
$(i,i+1,i+2,\ldots,m-1)$. Consider the factorization
$$
(1,2,\ldots,m)=(1,2,\ldots,i-1,i)(i,m)(i,i+1,i+2,\ldots,m-1)
$$
and apply the induction hypothesis to the cycles $(1,2,\ldots,i-1,i)$
and $(i,i+1,i+2,\ldots,m-1)$, and the trees $T_1$ and $T_2$. Number the
edges of $T_1$ first, then number the edge $(i,j)$ and then number the
edges of $T_2$.
\end{proof}  

Deletion-contraction graphs may be equivalently described in terms of
the permutations $\gamma$, obtained by composing all the
hypercontractions of a deletion-contraction process. The proof of this
statement is more easily presented by using the notion of the 
Kreweras dual of a hypermap. 

\begin{theorem}
\label{thm:mut0}  
Let $(\sigma,\alpha)$ be a hypermap of genus $g$ on the set of points
$\{1,2,\ldots,n\}$. Let $G$ be a graph whose vertices are the cycles of
$\sigma$ and whose edges are transpositions $(u,v)$ where $1\leq u<v\leq
n$. Then  $G$ is a deletion-contraction graph, if and only if there is a
spanning genus $g$ unicellular hypermap $(\sigma,\gamma^{-1})$ of
$(\sigma,\alpha)$ such that the following are satisfied by the
permutations $\gamma$ and $\delta=\alpha^{-1}\gamma^{-1}$:
\begin{enumerate} 
\item Each edge of
$G$ either connects two points on the same cycle of $\gamma$, or on the
  same cycle of $\delta$.
\item The restriction of $G$ to any cycle of $\gamma$ or $\delta$ is a
  noncrossing tree if we represent the points in the cyclic order of the
  cycle of $\alpha$ containing the cycle of $\gamma$ or $\delta$.
\end{enumerate}  
\end{theorem}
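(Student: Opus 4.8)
\emph{Proof idea.} The plan is to recast the characterization of deletion--contraction graphs given by Theorem~\ref{thm:mut} --- an allowable spanning tree $T\subseteq G$ of hypercontractions together with a Goulden--Yong compatible order of the operations --- in terms of the two permutations $\gamma$ (the composite of all the hypercontractions) and $\delta$ (the composite of all the hyperdeletions), for which $\gamma\alpha\delta$ is the identity, so that $\delta=\alpha^{-1}\gamma^{-1}$ as in the statement. The bridge between ``the graph $T$'' and ``the permutation $\gamma$'' is the classical fact of D\'enes that a product of transpositions whose underlying multigraph is a forest has, irrespective of the order of multiplication, cycles equal to the connected components of that forest; applied to the hypercontractions (which form $T$) and the hyperdeletions (which form $G\setminus T$) it says that the cycles of $\gamma$, respectively of $\delta$, are the components of $T$, respectively of $G\setminus T$. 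I would argue one cycle $\alpha_i$ of $\alpha$ at a time: writing $\gamma_i,\delta_i$ for the restrictions and $n_i=|\alpha_i|$, one has $\alpha_i^{-1}=\delta_i\gamma_i$, and, $(\alpha_i,\gamma_i^{-1})$ being a genus-zero monopole, $z(\gamma_i)+z(\delta_i)=n_i+1$. The Kreweras-complement language recalled in Section~\ref{sec:prelim} is natural here, since $\gamma_i^{-1}$ and $\delta_i$ are a noncrossing partition of $\alpha_i$ and its Kreweras complement; in particular their blocks meet in at most one point.

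For the forward implication, let $(\sigma,\alpha)\models G$ with allowable spanning tree $T$, choose (Theorem~\ref{thm:mut}) a deletion--contraction process whose hypercontractions are the edges of $T$, let $\gamma$ be their composite and $\delta=\alpha^{-1}\gamma^{-1}$. First I would check that $(\sigma,\gamma^{-1})$ is a spanning hypertree of $(\sigma,\alpha)$ (necessarily of genus zero, $\gamma$ being a product of $z(\sigma)-1$ transpositions forming a forest): performing only the hypercontractions, in their original order, is still a legal sequence of topological hypercontractions --- the test on $\sigma$ is unaffected by the omitted hyperdeletions, and the requirement that each transposition disconnect the current hyperedge permutation survives because within each $\alpha_i$ the underlying transpositions form a forest and no step is ever allowed to re-connect a cycle --- so Theorem~\ref{thm:0trees} applies. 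Condition~(1) is then immediate from the component description. For condition~(2) one uses that $G$ restricted to $\alpha_i$ is a noncrossing tree (local tree-likeness): a cycle of $\gamma$ is the vertex set of a component of $T$ inside some $\alpha_i$, and adjoining any further $G$-edge with both ends in that vertex set would create a cycle in that noncrossing tree, so the restriction of $G$ to that cycle of $\gamma$ is exactly that component, a noncrossing subtree; the same works for $\delta$.

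For the converse, suppose $(\sigma,\gamma^{-1})$ is a spanning hypertree of $(\sigma,\alpha)$ and $\delta=\alpha^{-1}\gamma^{-1}$, satisfying (1)--(2). Corollaries~\ref{cor:refine1} and~\ref{cor:refine2} give that $\delta$, like $\gamma^{-1}$, has all its cycles inside cycles of $\alpha$ and represents a noncrossing partition there; hence the edges of $G$ inside a fixed $\alpha_i$ are, by~(1), partitioned among the cycles of $\gamma$ and of $\delta$ inside $\alpha_i$, by~(2) the $G$-edges inside each such cycle form a noncrossing spanning tree of it, and these $\gamma$-trees and $\delta$-trees are edge-disjoint because blocks of $\gamma_i^{-1}$ and blocks of its Kreweras complement $\delta_i$ meet in at most one point. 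A count with $z(\gamma_i)+z(\delta_i)=n_i+1$ then gives $2n_i-(n_i+1)=n_i-1$ edges inside $\alpha_i$, and the Kreweras-complementarity of the two partitions makes their union connected and acyclic, i.e.\ $G$ is locally tree-like. Finally take $T$ to be the union, over all cycles of $\alpha$, of the trees on the cycles of $\gamma$: it has $\sum_i(n_i-z(\gamma_i))=n-z(\gamma)=z(\sigma)-1$ edges (the last step because $(\sigma,\gamma^{-1})$ is unicellular of genus zero), and since $z(\gamma\sigma)=1$ every one of these $z(\sigma)-1$ transpositions must, in any order of application, join two currently distinct cycles of $\sigma$, so $T$ is a spanning tree of $G$. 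That $T$ is allowable is read off from $\alpha_i^{-1}=\delta_i\gamma_i$, which displays the hypercontraction transpositions as a prefix of a minimal transposition factorization of a cyclic permutation, i.e.\ exactly the Goulden--Yong condition of Proposition~\ref{prop:ctree}; Theorem~\ref{thm:mut} then yields a deletion--contraction process with graph of underlying transpositions $G$.

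The main obstacle I expect lies in the converse: showing that the union of the $\gamma$-trees and $\delta$-trees over the points of a single cycle $\alpha_i$ is genuinely a noncrossing spanning tree, not just a forest or a graph concealing a cycle. This is where the Kreweras-complementarity of the noncrossing partitions given by $\gamma_i^{-1}$ and $\delta_i$ must be used in full force, not merely the noncrossing-ness of each separately. The cleanest route is likely induction on $n_i$: peel off a leaf of one of the trees, observe that deleting a ``visible'' block of a noncrossing partition leaves a noncrossing partition of the remaining points whose Kreweras complement is the obvious restriction, and reassemble. A subsidiary point requiring care is the forward-direction claim that the hypercontraction subsequence of a deletion--contraction process is on its own a legal sequence of topological hypercontractions.
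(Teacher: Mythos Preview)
Your proposal is correct and follows essentially the same route as the paper: both directions go through Theorem~\ref{thm:mut}, the Goulden--Yong factorization result, and the observation that inside each cycle $\alpha_i$ the partitions given by $\gamma_i^{-1}$ and $\delta_i^{-1}=\gamma_i\alpha_i$ are Kreweras complements of one another. The paper's own converse is in fact terser than yours: it simply notes the Kreweras duality, says ``take a tree on each cycle of $\gamma_i$, respectively $\delta_i$,'' and defers to Lemma~\ref{lemma:nctrees}; it does not spell out the edge count $2n_i-(n_i+1)=n_i-1$, nor that the union of the $\gamma$- and $\delta$-trees is itself a noncrossing spanning tree of $\alpha_i$, nor the verification that $T$ is allowable---all of which you correctly isolate as the substantive content. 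Your identification of the ``main obstacle'' is thus exactly the step the paper leaves implicit, and your suggested routes (Kreweras complementarity, or a direct appeal to the fact that $\alpha_i^{-1}=\delta_i\gamma_i$ is a minimal transposition factorization, whence Goulden--Yong forces the underlying edge set to be a noncrossing tree) both work.

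Two small remarks. First, the statement of the theorem says ``spanning genus $g$ unicellular hypermap $(\sigma,\gamma^{-1})$,'' but, as you note parenthetically, $(\sigma,\gamma^{-1})$ is in fact a spanning \emph{hypertree} (genus zero): $\gamma$ is a product of $z(\sigma)-1$ transpositions forming a forest on the $n$ points, so $z(\gamma)=n-z(\sigma)+1$ and the genus vanishes. The paper's own proof silently uses ``spanning hypertree'' throughout, so your reading is the intended one. Second, your forward-direction claim that the hypercontraction subsequence remains, on its own, a legal sequence of topological hypercontractions is correct for the reason you give: omitting hyperdeletions only \emph{coarsens} the current hyperedge permutation, so any transposition that disconnected it after a hyperdeletion still disconnects it without.
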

\begin{proof}
Consider first a deletion-contraction process. The hyperdeletions of the
process leave the vertices unchanged, each topological contraction merges
two vertices (and refines a current hyperedge into two). The resulting monopole
$\gamma\sigma$ is a vertex tour of the hypermap, and conversely, every
vertex tour arises as the composition of topological
hypercontractions. As seen in Section~\ref{sec:hyperdc}, the
permutation $\gamma\sigma$ is a vertex tour of the hypermap if and
only if $(\sigma,\gamma^{-1})$ is a spanning hypertree of
$(\sigma,\alpha)$. Whenever this condition is satisfied, $\gamma\alpha$
is a refinement of $\alpha$, and the transpositions underlying a
hypercontraction must form a tree on each cycle of $\gamma$. Introducing
$\delta$ as the composition of all transpositions underlying a
hyperdeletion, the permutation $\gamma\alpha\delta$ is the identity
permutation if and only if $\delta=\alpha^{-1}\gamma^{-1}$ holds. The
transpositions underlying the hyperdeletions must form trees on
each cycle of $\delta$ since, starting with $\gamma\alpha$, each
hyperdeletion refines a hyperedge into two, and at the end of the
process $\gamma\alpha\delta$ is the identity permutation. 

To prove the converse, assume that $\alpha_1,\alpha_2,\ldots,\alpha_m$
are the cycles of $\alpha$ and for each $i$ let us denote by $\gamma_i$,
respectively $\delta_i$ the products of cycles of $\gamma$, respectively
$\delta_i$ contained in $\alpha_i$. To state that $\gamma\alpha$ is a refinement of $\alpha$ is
equivalent to stating that $(\alpha_i,\gamma_{i}^{-1})$ is a noncrossing
partition for each $i$. (Note that $\alpha_i$ is the single vertex
here!) Furthermore $\delta=\alpha^{-1}\gamma^{-1}$ is equivalent to
stating that $(\alpha_i,\delta_i^{-1})$ is the Kreweras dual of
$(\alpha_i,\gamma_{i}^{-1})$ for each $i$.  

\begin{figure}[h]
%100%  
\begin{center}
\input{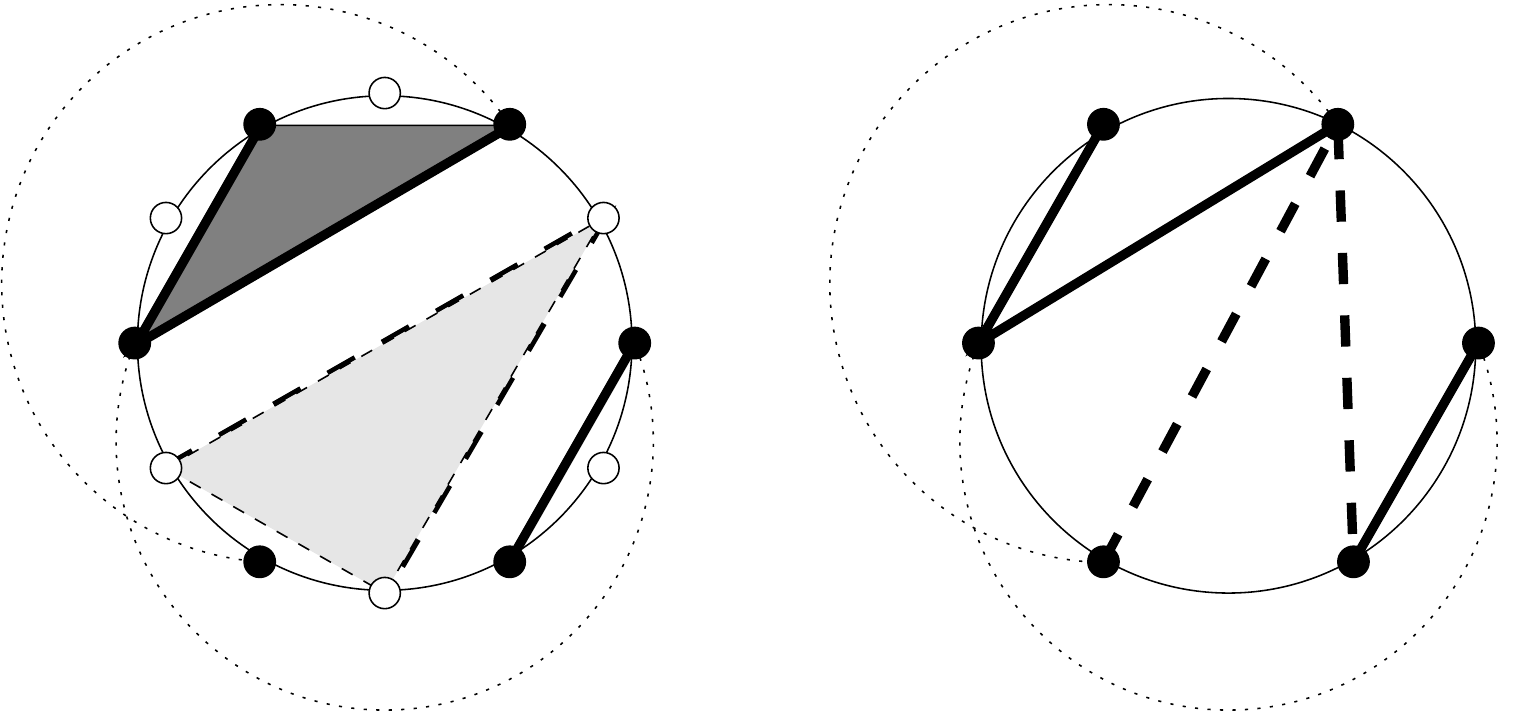_t}
\end{center}
\caption{The hypermap $(\sigma,\alpha)$ with $\sigma=(1,4)(2,5)(3)(6)$,
  $\alpha=(1,2,3,4,5,6)$, $\gamma^{-1}=(1,5,6)(2,3)(4)$ and
  $\delta^{-1}=(1,3,4)(2)(5)(6)$.} 
\label{fig:Kdual} 
\end{figure}

At this point it is useful to recall the visualization of he Kreweras
dual, as introduced in~\cite{Kreweras}. The left hand side of
Figure~\ref{fig:Kdual} contains 
an illustration for a genus $1$ hypermap which, for simplicity's sake contains a
single hyperedge $\alpha_1=(1,2,3,4,5,6)$. We draw these on a circle in the
clockwise order and represent the cycles of $\gamma_1^{-1}$ on these
points. For each point $p$ we also introduce a new point $p'$ immediately
after $p$ in the clockwise order. The Kreweras dual of a noncrossing
partition is the coarsest noncrossing partition on the new points whose
parts do not intersect the parts of the original noncrossing
partition. For a general hypermap we repeat this representation for each
$\alpha_i$. Now we may take a tree on each cycle of $\gamma_i$,
respectively $\delta_i$ thus represented, and conclude by referring to
Lemma~\ref{lemma:nctrees}. In Figure~\ref{fig:Kdual} the
selected hypercontractions are marked with bold solid lines and the
hyperdeletions with bold dashed lines. The resulting admissible tree is
shown on the right hand side of Figure~\ref{fig:Kdual}.   
\end{proof}

The proof of Theorem~\ref{thm:mut0} contains the proof of the following,
stronger statement:
\begin{proposition}
\label{prop:can1}  
Let $(\sigma,\alpha)$ be a hypermap, let $G$ be a
deletion-contraction graph for it, and let $T$ be an admissible spanning
tree of $G$. Let $\gamma$ be the permutations obtained by the
transpositions marking the edges of $T$ composed in such an order that
each cycle of $\alpha$ transpositions incident to the same point are
performed in the cyclic order of $\alpha$. Let
$\delta=\alpha^{-1}\gamma^{-1}$. Then there is a deletion-contraction
process whose underlying graph of transpositions is $G$, the
transpositions underlying the hypercontractions are the edges of $T$,
and hypercontractions, respectively hyperdeletions belonging to the same
cycle of $\gamma$, respectively $\delta$ are performed consecutively.
\end{proposition}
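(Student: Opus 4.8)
The plan is to revisit the construction carried out inside the proof of Theorem~\ref{thm:mut0}, feeding it the given tree $T$ in place of a free choice of trees on the cycles of $\gamma$ and of $\delta$, and then to bookkeep the order in which the operations are performed. I would begin by unpacking the data. Let $\alpha_1,\dots,\alpha_m$ be the cycles of $\alpha$, and for each $i$ write $\gamma_i$ (resp.\ $\delta_i$) for the product of the cycles of $\gamma$ (resp.\ of $\delta=\alpha^{-1}\gamma^{-1}$) supported inside $\alpha_i$. Since $T$ is an allowable spanning tree of the deletion-contraction graph $G$, the proof of Theorem~\ref{thm:mut0} gives the following: $(\sigma,\gamma^{-1})$ is a spanning genus $g$ unicellular hypermap of $(\sigma,\alpha)$; $(\alpha_i,\gamma_i^{-1})$ is a noncrossing partition whose Kreweras dual is $(\alpha_i,\delta_i^{-1})$, so that $\gamma_i\alpha_i\delta_i$ is the identity on the points of $\alpha_i$; every edge of $T$ has both endpoints on one cycle of $\gamma$ and every edge of $G\setminus T$ has both endpoints on one cycle of $\delta$; and the edges of $T$ lying in a given cycle of $\gamma$ (resp.\ the edges of $G\setminus T$ lying in a given cycle of $\delta$) form a noncrossing spanning tree of that cycle, drawn in the cyclic order of the cycle of $\alpha$ containing it.

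Next I would exhibit the process explicitly: perform all hypercontractions first and all hyperdeletions afterwards. In the contraction phase, run through the cycles of $\gamma$ in an arbitrary order; for a cycle $c$ of $\gamma$, perform consecutively the hypercontractions carried by the edges of the noncrossing tree on $c$, applied in the order prescribed by condition (ii) of Theorem~\ref{thm:mut} (around each point, in the cyclic order of the ambient cycle of $\alpha$). By the converse direction of the Goulden--Yong theorem~\cite[Theorem~2.2]{Goulden-Yong}, invoked as in the proof of Proposition~\ref{prop:ctree}, this order makes every hypercontraction disconnect the current hyperedge permutation, and contracting along an edge of the forest $T$ always joins two distinct vertices, so each step is a legal topological hypercontraction; distinct cycles of $\gamma$ are carried by disjoint transpositions that lie inside single cycles of the running hyperedge permutation, so they may be taken one block at a time. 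At the end of this phase one is at the monopole $(\gamma\sigma,\gamma\alpha)$, a hypermap since $\gamma\sigma$ is a single cycle, and composing the transpositions of $T$ in the way dictated by the statement returns the permutation $\gamma$. The deletion phase is the mirror image: run through the cycles of $\delta$, and for a cycle of $\delta$ perform consecutively the hyperdeletions carried by the noncrossing tree on it, in the order of Theorem~\ref{thm:mut}(ii); each step disconnects the running hyperedge permutation, the vertex permutation remains the single cycle $\gamma\sigma$ so every intermediate pair is a hypermap, and because $\gamma_i\alpha_i\delta_i$ is the identity on the points of each $\alpha_i$, the phase ends with $\gamma\alpha\delta$ the identity. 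As $z(\gamma\sigma)=1$, this is a deletion-contraction process.

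It then remains to read off the three assertions. The set of underlying transpositions is the union of the edges of $T$ and of $G\setminus T$, i.e.\ the entire edge set of $G$, so the graph of underlying transpositions is $G$; the transpositions underlying the hypercontractions are, by construction, exactly the edges of $T$; and, again by construction, the hypercontractions were grouped cycle-of-$\gamma$ by cycle-of-$\gamma$ inside the contiguous contraction phase, and the hyperdeletions cycle-of-$\delta$ by cycle-of-$\delta$ inside the contiguous deletion phase, so each of these families is performed consecutively.

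The step I expect to be the real obstacle is the claim, inside the contraction phase, that running through the edges of a single block's tree in the prescribed local order keeps both endpoints of each transposition inside a common cycle of the running hyperedge permutation (so that every step is genuinely disconnecting), and that distinct blocks can be isolated from one another. This amounts to saying that the order of Theorem~\ref{thm:mut}(ii), restricted to the points of one $\alpha_i$, is a bona fide minimal factorization of $\alpha_i^{-1}=\delta_i\gamma_i$ --- which is the converse half of the Goulden--Yong theorem --- together with the elementary observation that two distinct parts of a noncrossing partition occupy disjoint arcs and are therefore carried by commuting transpositions. So nothing beyond the content of the proof of Theorem~\ref{thm:mut0} is required.
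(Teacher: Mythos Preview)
Your proposal is correct and follows essentially the same approach as the paper: the paper simply asserts that ``the proof of Theorem~\ref{thm:mut0} contains the proof'' of this proposition, and what you have done is exactly to revisit that construction, feed in the given admissible tree $T$, and spell out the cycle-by-cycle ordering using the Goulden--Yong/Lemma~\ref{lemma:nctrees} machinery already invoked there. Your explicit choice to perform all hypercontractions first (grouped by cycles of $\gamma$) and all hyperdeletions afterwards (grouped by cycles of $\delta$) is a legitimate and slightly more detailed instantiation of what the paper leaves implicit.
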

Let us call the deletion-contraction processes described
Proposition~\ref{prop:can1} {\em  canonical}. 

Unfortunately, it seems unlikely that activities could be associated to
the hyperdeletions and hypercontractions in a way that their statistics
would be independent of the ordering of these operations. Consider the
example shown in Figure~\ref{fig:hypermap2}. This is the hypermap
$(\sigma,\alpha)$ with $\sigma=(1,4)(2,5)(3)$ and $\alpha=(1,2,3)(4,5)$.

\begin{figure}[h]
%90%  
\begin{center}
\input{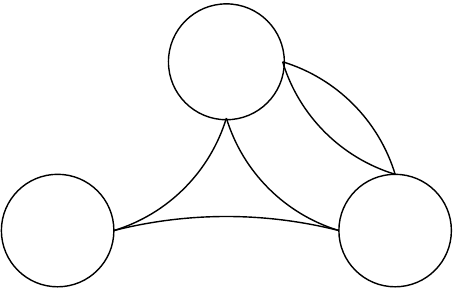_t}
\end{center}
\caption{Hypermap of genus zero with two hyperedges}
\label{fig:hypermap2} 
\end{figure}

This hypermap has two hyperedges and three spanning hypertrees
$(\sigma,\gamma^{-1})$, the possible values of $\gamma^{-1}$ are
$(1,2,3)(4)(5)$, $(1)(2,3)(4,5)$ and $(1,3)(2)(4,5)$. In two of these
spanning hypertrees the edge $(4,5)$ is contracted, in one of them
it is deleted. If we try to replicate Tutte's proof swapping two
adjacent edge labels, and creating a bijection between spanning trees,
in the hypertree version we would need to match both hypertrees with
$(4,5)$ contracted to the only hypertree with $(4,5)$ deleted. There is
no such bijection. If we refine the picture to deletion-contraction
processes, the only spanning hypertree with a cycle of length $3$ gives
rise to three maps, whereas each of the other two
spanning hypertrees gives rise to only one map each. A Tutte style
bijection remains elusive. 

\section{Deletion-contraction formulas counting spanning hypertrees}
\label{sec:shtc}

Let $M=(\sigma, \alpha)$ be a map and $e=(i, j)$ one of its edges. It
is well known that the number of spanning trees of $M$ is equal to the
number of spanning trees of the map $M' = (\sigma, \alpha(i, j))$
obtained by deleting $e$ plus the number of spanning trees of the map $M''
= ((i, j)\sigma, (i, j)\alpha)$ obtained by contracting $e$.  It is
understood that deleting $e$ is not allowed if this disconnects the map
(in this case $M'$ is not a map, as $\sigma$ and $\alpha(i,j)$ do not
generate a transitive permutation group), and contracting $(i,j)$ is not
allowed if $(i,j)$ is a loop (and generates a non-topological
contraction). The justification is very simple: the spanning trees
of $M'$ are exactly the spanning trees of $M$ not containing the edge
$(i,j)$, and the spanning trees $((i,j)\sigma,\theta')$ of $M''$
correspond bijectively to those spanning trees $(\sigma,\theta)$ of
$M$ which contain the edge $(i,j)$, via the correspondence
$\theta'=(i,j)\theta$. 
 
The obvious generalization of this result does not hold for counting
spanning hypertrees of a hypermap, as the following example
shows. Consider the hypermap $(\sigma,\alpha)$ shown in
Figure~\ref{fig:hypermap2} and the transposition $(1,2)$. The hyperdeletion  
of $(1,2)$ gives the hypermap $(\sigma,\alpha(1,2))$ where
$\alpha(1,2)=(1,3) (2) (4,5)$. This hypermap has only one spanning hypertree
$(\sigma,(1,3)(4,5))$. The hypercontraction of $(1,2)$ gives the
hypermap $((1,2)\sigma,(1,2)\alpha)$ where $(1,2)\sigma=(1,4,2,5) (3)$
and $(1,2)\alpha=(1)(2,3) (4,5)$, which has the only spanning
hypertree $((1,2)\sigma,(2,3)(4)(5))$. However, as noted at the end
of the last section, the hypermap $(\sigma,\alpha)$ shown in
Figure~\ref{fig:hypermap2} has three hypertrees.

Hyperdeletions and hypercontractions still remain useful tools in
describing the set of spanning genus $g$ unicellular hypermaps of a
hypermap, because of the next two results.

\begin{proposition}
\label{prop:deletion}
Let $H = (\sigma, \alpha)$ be a hypermap and let $i,j$ be
two points belonging to the same cycle of $\alpha$. Let $g$ be any nonnegative
integer. If $H'=(\sigma,\alpha (i,j))$ obtained by applying the
hyperdeletion $(i,j)$, is a hypermap then its spanning genus $g$
unicellular hypermaps form a subset of the set of all spanning genus $g$ unicellular hypermaps of $H$: this
subset contains only spanning genus $g$ unicellular hypermaps
$(\sigma,\theta)$ for which $i$ and $j$ belong to different cycles of
$\theta$.   
\end{proposition}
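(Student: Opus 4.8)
\emph{Proof plan.} The plan is to argue purely with the noncrossing-partition description of a refinement recalled in Section~\ref{sec:prelim} (the restatement just before Corollary~\ref{cor:refine2}, saying that $\theta$ refines $\gamma$ exactly when each cycle of $\theta$ lies inside a single cycle of $\gamma$ and, for every cycle of $\gamma$, the cycles of $\theta$ inside it form a noncrossing partition with respect to that cycle's cyclic order), together with the explicit effect of right-multiplying $\alpha$ by $(i,j)$.

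First I would record the cycle structure of $\alpha(i,j)$. Let $C$ be the cycle of $\alpha$ containing both $i$ and $j$. By Serret's lemma and the accompanying description of $\pi\tau$ for a disconnecting transposition $\tau$ (recalled at the start of Section~\ref{sec:hyperdc}), passing from $\alpha$ to $\alpha(i,j)$ leaves every cycle of $\alpha$ other than $C$ unchanged and splits $C$ into two cycles $C_i\ni i$ and $C_j\ni j$, each consisting of cyclically consecutive points of $C$; thus $C=C_i\sqcup C_j$ is a partition of $C$ into two arcs of its cyclic order.

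Next, let $(\sigma,\theta)$ be any spanning genus $g$ unicellular hypermap of $H'=(\sigma,\alpha(i,j))$. By definition $(\sigma,\theta)$ is itself a hypermap, is unicellular, has genus $g$, and $\theta$ is a refinement of $\alpha(i,j)$. The last condition forces no cycle of $\theta$ to meet both $C_i$ and $C_j$, while the cycles of $\theta$ inside $C_i$ (resp.\ inside $C_j$) form a noncrossing partition of the arc $C_i$ (resp.\ $C_j$). Since $C_i$ and $C_j$ are two disjoint arcs of $C$ — after cutting $C$ at a point between them they become two linear intervals, one entirely preceding the other — a partition of $C$ each of whose blocks lies inside $C_i$ or inside $C_j$ cannot have a crossing pair of blocks taken from the two different arcs; hence the cycles of $\theta$ inside $C$ form a noncrossing partition of $C$. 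For every other cycle of $\alpha$, which is simultaneously a cycle of $\alpha(i,j)$, the cycles of $\theta$ inside it already form a noncrossing partition. Therefore $\theta$ is a refinement of $\alpha$, and, being also a hypermap, unicellular, and of genus $g$, the pair $(\sigma,\theta)$ is a spanning genus $g$ unicellular hypermap of $H$; this establishes the claimed inclusion. Moreover, since $i\in C_i$ and $j\in C_j$ lie in two distinct blocks of the partition of $C$ induced by $\theta$, the points $i$ and $j$ lie in different cycles of $\theta$, which is the remaining assertion.

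The only step needing any care is the arc argument in the third paragraph: one must state the noncrossing condition for the cyclic order of $C$ rather than a linear order, the standard device being to cut $C$ between $C_i$ and $C_j$ so that they become consecutive intervals; everything else is bookkeeping from the definitions. One could alternatively deduce ``$\theta$ refines $\alpha$'' from transitivity of the refinement relation together with the fact (noted in Section~\ref{sec:hyperdc}) that $\alpha(i,j)$ is itself a refinement of $\alpha$, but the arc argument is preferable here because it simultaneously yields the statement that $i$ and $j$ lie in different cycles of $\theta$.
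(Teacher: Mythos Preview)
Your proof is correct and follows essentially the same idea as the paper's: both arguments reduce to ``$\theta$ refines $\alpha(i,j)$, and $\alpha(i,j)$ refines $\alpha$, hence $\theta$ refines $\alpha$.'' The paper states this in one line via transitivity of the refinement relation (your final paragraph names exactly this shortcut), while you unpack it through the explicit noncrossing-partition description; the extra arc argument you give is sound and, as you note, has the side benefit of making the ``$i$ and $j$ lie in different cycles of $\theta$'' conclusion immediate, though that conclusion is equally immediate from $\theta$ refining $\alpha(i,j)$ in either approach.
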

\begin{proof}
Let  $(\sigma,\theta)$ be a spanning genus $g$ unicellular hypermap of
$H'$. Clearly $(\sigma, \theta)$ is a genus $g$ unicellular hypermap. In
order to check  that it is a spanning genus $g$ unicellular hypermap of
$H$ one has to prove that $\theta$ is a refinement of $\alpha$. This
holds  since $\theta$ is  a refinement of $\alpha(i,j)$  
and  $i,j $ being  in the same cycle of $\alpha$, the permutation
$\alpha(i,j)$ is a refinement of $\alpha$. 
\end{proof}
\begin{remark}
Regarding the interpretation of Proposition~\ref{prop:deletion} one
  should note that $H'=(\sigma,\alpha (i,j))$ may not be a hypermap,
  because the permutation group generated by $\sigma$ and $\alpha (i,j)$
  may not be transitive. This also happens when we delete an isthmus in a
  map. Furthermore $(i,j)$ may induce a non-topological hyperdeletion, which
  will decrease the genus. If $g$ is the genus of $H$ then none of the
  spanning genus $g$ unicellular hypermaps of $H$ will be a spanning
  genus $g$ unicellular hypermaps of a
  hypermap $H'$ obtained by a non-topological hyperdeletion. That said,
  the genus of a unicellular hypermap $(\sigma,\theta)$ does not change if
  we change the hypermap in which it is a spanning unicellular hypermap.  
\end{remark}  

Regarding hypercontractions we have the following analogous result.

\begin{proposition}
\label{prop:contraction}
Let $H = (\sigma, \alpha)$ be a hypermap and let $i,j$ be two points
belonging to the same cycle of $\alpha$. Assume that the
hypercontraction of $(i,j)$ in $H$ does not disconnect $H$: it results in
the hypermap $H''=((i,j)\sigma,(i,j)\alpha)$. Then the spanning
unicellular hypermaps of $H''$ are all hypermaps of the form
$((i,j)\sigma,(i,j)\theta)$ where $(\sigma,\theta)$ is any spanning
unicellular hypermap of $H$ satisfying that $i$ and $j$ belong to the
same cycle of $\theta$. Here
$g((i,j)\sigma,(i,j)\theta)=g(\sigma,\theta)$ if $i$ and $j$ belong to
different cycles of $\sigma$ and
$g((i,j)\sigma,(i,j)\theta)=g(\sigma,\theta)-1$ otherwise. 
\end{proposition}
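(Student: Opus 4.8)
The plan is to verify that $\theta\mapsto(i,j)\theta$ is an involution on permutations which restricts to a bijection between the spanning unicellular hypermaps of $H''$ and those spanning unicellular hypermaps $(\sigma,\theta)$ of $H$ for which $i$ and $j$ lie in a common cycle of $\theta$. Two parts of this are automatic. First, $((i,j)\theta)^{-1}(i,j)\sigma=\theta^{-1}\sigma$, so the face permutation is literally unchanged; in particular the number of faces is preserved in both directions, and ``unicellular'' on one side forces ``unicellular'' on the other. Second, any pair of permutations whose face permutation is a single cycle automatically generates a transitive group, since every $\langle\sigma,\theta\rangle$-orbit is invariant under $\theta^{-1}\sigma$ and hence contains at least one cycle of it; thus $((i,j)\sigma,(i,j)\theta)$ is a genuine hypermap the moment $\theta^{-1}\sigma$ is a single cycle, and symmetrically. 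So the only substantive point is that the refinement relation is transported correctly, which I isolate as a lemma.

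\emph{Key Lemma.} If $\theta$ refines $\alpha$ and $i,j$ lie in a common cycle of $\theta$, then $(i,j)\theta$ refines $(i,j)\alpha$; conversely, if $\psi$ refines $(i,j)\alpha$, then $(i,j)\psi$ refines $\alpha$ and $i,j$ lie in a common cycle of $(i,j)\psi$. I would prove this through Proposition~\ref{prop:refinement}. Its second (cycle-count) condition is a Serret-lemma computation: because $i,j$ lie in a common cycle of both $\theta$ and $\alpha$, the transposition $(i,j)$ disconnects each, so $z((i,j)\theta)=z(\theta)+1$ and $z((i,j)\alpha)=z(\alpha)+1$, and the identity $z(\theta^{-1}\alpha)+z(\theta)=n+z(\alpha)$ valid for the refinement $\theta$ of $\alpha$ becomes $z(((i,j)\theta)^{-1}(i,j)\alpha)+z((i,j)\theta)=n+z((i,j)\alpha)$ after substituting $((i,j)\theta)^{-1}(i,j)\alpha=\theta^{-1}\alpha$. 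The first condition is where the hypothesis on $i,j$ is used. Restricting to the cycle $\alpha_{1}$ of $\alpha$ carrying $i$ and $j$ (outside it nothing moves), the cycles of $\theta$ inside $\alpha_{1}$ form a noncrossing partition whose block $B$ contains both $i$ and $j$; the points $i,j$ cut the points of $\alpha_{1}$ into the two arcs that are exactly the supports of the two cycles of $(i,j)\alpha_{1}$. By the noncrossing property every block other than $B$ lies inside one of the two arcs, while multiplying $B$ by $(i,j)$ replaces it by the two cycles on $B$ intersected with each arc; hence every point $k$ and its image $(i,j)\theta(k)$ remain in the same cycle of $(i,j)\alpha$. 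The converse direction is the mirror argument: the cycles of $\psi$ inside each arc form noncrossing partitions, and multiplying by $(i,j)$ fuses the block at the endpoint $i$ with the block at the endpoint $j$ into a single block of a noncrossing partition of the points of $\alpha_{1}$, a block which then contains both $i$ and $j$.

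Granting the lemma, the forward inclusion is immediate: for a spanning unicellular hypermap $(\sigma,\theta)$ of $H$ with $i,j$ in a common cycle of $\theta$, the pair $((i,j)\sigma,(i,j)\theta)$ refines $(i,j)\alpha$, is a hypermap, and is unicellular, hence spans $H''$. Conversely a spanning unicellular hypermap of $H''$ is some $((i,j)\sigma,\psi)$ with $\psi$ refining $(i,j)\alpha$; putting $\theta=(i,j)\psi$, the lemma makes $(\sigma,\theta)$ a unicellular hypermap spanning $H$ with $i,j$ in a common cycle of $\theta$, and $((i,j)\sigma,\psi)=((i,j)\sigma,(i,j)\theta)$. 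For the genus formula I would apply~\eqref{eq:genusdef} to the two connected hypermaps $(\sigma,\theta)$ and $((i,j)\sigma,(i,j)\theta)$ and subtract; since $((i,j)\theta)^{-1}(i,j)\sigma=\theta^{-1}\sigma$ the face contributions cancel, leaving $2g(\sigma,\theta)-2g((i,j)\sigma,(i,j)\theta)=(z((i,j)\sigma)-z(\sigma))+(z((i,j)\theta)-z(\theta))$. By Serret's lemma the second bracket equals $+1$, while the first equals $-1$ when $i,j$ lie in different cycles of $\sigma$ and $+1$ when they lie in the same cycle, giving $g((i,j)\sigma,(i,j)\theta)=g(\sigma,\theta)$ in the first case and $g(\sigma,\theta)-1$ in the second. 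The step I expect to be the real obstacle is the first condition of Proposition~\ref{prop:refinement} in the Key Lemma: the noncrossing-partition bookkeeping that converts ``$i,j$ share a cycle of $\theta$'' into ``every other block lies in a single arc'', together with the dual gluing claim; everything else is either a cycle-count identity or an automatic consequence of unicellularity.
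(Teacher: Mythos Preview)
Your proof is correct and follows essentially the same route as the paper's: the face permutation identity $((i,j)\theta)^{-1}(i,j)\sigma=\theta^{-1}\sigma$, the automatic transitivity from unicellularity, and the Serret-lemma genus computation are exactly what the paper uses. Your Key Lemma makes explicit (via Proposition~\ref{prop:refinement} and the noncrossing-partition picture) the refinement-transport step that the paper merely asserts in one line, so your write-up is in fact more complete than the original on that point.
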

\begin{proof}
Note first that the only situation when the hypercontraction of $(i,j)$
may disconnect $H$ is when $i$ and $j$ belong to the same cycle of
$\sigma$ and no spanning unicellular hypermap $(\sigma, \theta)$ of $H$
contains $i$ and $j$ on the same cycle of $\theta$. Indeed, only a
non-topological hypercontraction may disconnect hypermap. Assume by way
of contradiction that $i$ and $j$ belong to the same cycle of $\theta$
for some spanning unicellular hypermap $(\sigma, \theta)$. The
hypercontraction of $(i,j)$ takes $(\sigma, \theta)$ into $((i,j)\sigma,
(i,j)\theta)$ which is a unicellular hypermap since
$((i,j)\theta))^{-1}(i,j)\sigma=\theta^{-1}\sigma$ is a circular
permutation. Since $(i,j)$ disconnects $\theta$, the permutation
$(i,j)\theta$ refines $(i,j)\alpha$, hence $((i,j)\sigma, (i,j)\alpha)$
is also a hypermap. 

Let $((i,j)\sigma,\theta')$ be a genus $g$ unicellular hypermap of
$H''$. The points $i,j$ must belong to different cycles of $\theta'$ as
this permutation is a refinement of $(i,j)\alpha$ which contains $i$ and
$j$ on different cycles. Hence $\theta=(i,j)\theta'$ has $i$ and $j$ on
the same cycle.  If $i$ and $j$ belong to different
cycles of $\sigma$, then they  also belong to the same cycle of
$(i,j)\sigma$ and the map $(\sigma,\theta)\mapsto
((i,j)\sigma,(i,j)\alpha)$ is a topological hypercontraction, not
changing the number of faces, nor the genus. If $i$ and $j$ belong to
the same cycle of $\sigma$ and hence to different cycles of $(i,j)\sigma$ 
then the map $(\sigma,\theta)\mapsto
((i,j)\sigma,(i,j)\alpha)$ is a non-topological hypercontraction
decreasing the genus by one.  
\end{proof}

\begin{theorem}
\label{thm:stdecomp}
Let $H=(\sigma,\alpha)$ a hypermap such that $(1,2,\ldots,m)$ is a cycle
of $\alpha$. If $m\geq 2$ and $g\geq 0$ then the
set of all spanning genus $g$ unicellular hypermaps $(\sigma,\theta)$ of
$H$ is the disjoint union of the following sets $S_1,S_2,\ldots,S_m$:
\begin{itemize}
\item[(1)] $S_1$ is the set of all spanning genus $g$ unicellular hypermaps of
  $H_1=(\sigma,\alpha(1,m))$, obtained by the hyperdeletion of $(1,m)$
  in $H$. We set $S_1=\emptyset$ if the hyperdeletion of $(1,m)$
  disconnects $H$.
\item[(2)] Let $H_2=((1,2)\sigma,(1,2)\alpha)$ be the hypermap 
  obtained by applying the hypercontraction of $(1,2)$ to $H$. We set
  $S_2=\emptyset$ if the hypercontraction of $(1,2)$ disconnects
  $H$. Otherwise $S_2$ is the set of all spanning genus $g$ unicellular
  hypermaps of the form  
  $(\sigma,(1,2)\theta')$ where $((1,2)\sigma,\theta')$ is any spanning
genus $g$ (genus $g-1$) unicellular hypermap of $H_2$ if the
hypercontraction of $(1,2)$ is topological (non-topological).   
\item[(3)] For $k=3,\ldots,m$ we set
  $H_k=((1,k)\sigma,(1,k)\alpha(1,k-1))$, obtained by applying 
  the hyperdeletion of $(1,k-1)$ and the hypercontraction $(1,k)$ in
  $H$. We set $S_k=\emptyset$ if the hyperdeletion of $(1,k-1)$ and the
  hypercontraction $(1,k)$ disconnect $H$. Otherwise $S_k$ is the set of all
  genus $g$ unicellular hypermaps $(\sigma,(1,k)\theta')$, where
  $((1,k)\sigma,\theta')$ is any spanning genus $g$ (genus $g-1$)
  unicellular hypermap of the hypermap $H_k$ if the hypercontraction of
  $(1,k)$ is topological (non-topological).   
\end{itemize}  
\end{theorem}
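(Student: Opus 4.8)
The plan is to recognize the asserted decomposition as the Simion--Ullman decomposition of the noncrossing partition lattice of $\{1,2,\dots,m\}$, applied to the hyperedge $\alpha_1=(1,2,\dots,m)$ of $\alpha$ and organized according to the block containing the point $1$. Since $\theta$ is a refinement of $\alpha$, the cycle of $\theta$ containing $1$ lies inside $\alpha_1$; write $B$ for this cycle, equivalently the block containing $1$ of the noncrossing partition $\theta|_{\{1,\dots,m\}}$ (noncrossing with respect to the cyclic order $1,2,\dots,m$). The target is the uniform statement: for a spanning genus $g$ unicellular hypermap $(\sigma,\theta)$ of $H$, one has $(\sigma,\theta)\in S_1$ iff $B=\{1\}$, and $(\sigma,\theta)\in S_k$ iff $\min(B\setminus\{1\})=k$ for $2\le k\le m$. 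Granting this, disjointness and exhaustiveness are immediate, since each block $B\ni 1$ either equals $\{1\}$ or has a unique smallest element other than $1$, and that element lies in $\{2,\dots,m\}$.

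For $S_1$ I would first compute that $\alpha(1,m)$ replaces the cycle $(1,2,\dots,m)$ by $(1)(2,3,\dots,m)$, i.e.\ it isolates the point $1$. Consequently $\theta$ refines $\alpha(1,m)$ exactly when $\theta$ refines $\alpha$, $\theta(1)=1$, and $\theta|_{\{2,\dots,m\}}$ is noncrossing with respect to $(2,3,\dots,m)$; but the last condition is automatic once $\theta|_{\{1,\dots,m\}}$ is noncrossing and $1$ is a fixed point, because $1$ sits between $m$ and $2$ in the cyclic order and removing it preserves noncrossing-ness. Hence the spanning genus $g$ unicellular hypermaps of $H_1$ are precisely the spanning genus $g$ unicellular hypermaps $(\sigma,\theta)$ of $H$ with $\theta(1)=1$, i.e.\ with $B=\{1\}$; moreover transitivity of $\langle\sigma,\theta\rangle$ forces transitivity of $\langle\sigma,\alpha(1,m)\rangle$, so whenever such a $\theta$ exists $H_1$ really is a hypermap, which is why the convention $S_1=\emptyset$ in the disconnecting case is consistent.

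For $S_2$ the claim is essentially a reformulation of Proposition~\ref{prop:contraction} with $(i,j)=(1,2)$: that proposition gives a bijection $\theta'\mapsto(1,2)\theta'$ between the spanning unicellular hypermaps of $H_2$ and the spanning unicellular hypermaps $(\sigma,\theta)$ of $H$ with $1$ and $2$ on the same cycle of $\theta$ (here $2\in B$), under which the genus is preserved when $1,2$ lie on different cycles of $\sigma$ and drops by one otherwise; thus the genus $g$, resp.\ $g-1$, spanning unicellular hypermaps of $H_2$ correspond exactly to the genus $g$ ones of $H$ with $2\in B$, and the proof of Proposition~\ref{prop:contraction} also identifies the disconnecting case with the absence of any such $(\sigma,\theta)$. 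For $S_k$ with $k\ge 3$ I would factor $H_k$ as the hyperdeletion of $(1,k-1)$ applied to $H$, producing $\widetilde H=(\sigma,\alpha(1,k-1))$ (whose hyperedge $(1,\dots,m)$ becomes $(1,k,k+1,\dots,m)(2,3,\dots,k-1)$), followed by the hypercontraction of $(1,k)$ applied to $\widetilde H$. Using Proposition~\ref{prop:deletion} sharpened by the observation that deleting the block of consecutive points $2,\dots,k-1$ from a noncrossing partition preserves noncrossing-ness, together with the transitivity argument above, the spanning genus $g$ unicellular hypermaps of $\widetilde H$ are exactly the spanning genus $g$ unicellular hypermaps $(\sigma,\theta)$ of $H$ for which no cycle of $\theta$ meets both $\{2,\dots,k-1\}$ and $\{1,k,k+1,\dots,m\}$; among these, the further condition that $1,k$ lie on the same cycle (imposed by Proposition~\ref{prop:contraction} applied to $\widetilde H$) is, by the noncrossing property, equivalent to $B\subseteq\{1,k,k+1,\dots,m\}$ with $k\in B$, i.e.\ to $\min(B\setminus\{1\})=k$. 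Tracking the genus through the hypercontraction $(1,k)$ exactly as for $S_2$, and using $(1,k)(1,k)\psi=\psi$, identifies $S_k$ with $\{(\sigma,\theta):\min(B\setminus\{1\})=k\}$ and reconciles the empty cases.

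The step I expect to be the main obstacle is the sharpening of Proposition~\ref{prop:deletion}, which as stated only gives an inclusion and the weak necessary condition that the two points lie on different cycles of $\theta$; I need the exact image of a hyperdeletion, namely the set of refinements of $\alpha\tau$, and for that I must spell out and use the elementary fact that removing a set of cyclically consecutive points from a noncrossing partition (and closing up the circle) yields a noncrossing partition, plus the transitivity argument guaranteeing that the intermediate hypermaps $H_1$, $\widetilde H$, $H_k$ exist whenever a candidate spanning hypertree does. The remaining work --- the genus shifts in the non-topological cases and the verification that each empty $S_k$ corresponds precisely to the nonexistence of spanning hypertrees with the prescribed block structure --- is routine bookkeeping once these two tools are available.
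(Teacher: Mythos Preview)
Your approach is essentially identical to the paper's: the paper introduces the function $\phi(\theta)$ which equals $1$ if $1$ is a fixed point of $\theta$ and otherwise equals the second smallest element of the $\theta$-cycle containing $1$---this is precisely your $\min(B\setminus\{1\})$ (with the convention that $\phi=1$ when $B=\{1\}$)---and then proves $(\sigma,\theta)\in S_k$ iff $\phi(\theta)=k$ using exactly the permutation computations and applications of Propositions~\ref{prop:deletion} and~\ref{prop:contraction} that you outline. Your treatment is in fact slightly more careful than the paper's on two points: the paper does not spell out the ``sharpening'' of Proposition~\ref{prop:deletion} (it simply asserts that $\phi(\theta)=k$ forces $\theta$ to refine $(1,k,\dots,m)(2,\dots,k-1)$, which is your noncrossing-ness observation), and it does not explicitly reconcile the disconnecting cases with the emptiness of the corresponding index classes.
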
  
\begin{proof}
Given a spanning genus $g$ unicellular hypermap $(\sigma,\theta)$ of
$H$ we define 
$\phi(\theta)=1$ if $1$ is a fixed point of $\theta$ and as the second
smallest element of the cycle of $\theta$ containing $1$ otherwise.
We show for all $k$ that  $(\sigma,\theta)$ belongs to the set $S_k$ if and
only if $\phi(\theta)=k$. For this purpose let us describe the
hyperedges of the hypermaps $H_1,H_2,\ldots,H_m$. Since all
hyperdeletions and hypercontractions involve points on the cycle
$(1,2,\ldots,m)$ of $\alpha$, only hyperedges contained in the set
$\{1,2,\ldots,m\}$ are different in the hypermaps obtained by our
hyperdeletions and hypercontractions.

For $k=1$, we have
$$
(1,2,\ldots,m)(1,m)=(1)(2,3,\ldots,m).
$$
Any element of $S_1$ must satisfy $\phi(\theta)=1$. Conversely, if $1$
is a fixed point of $\theta$ then the spanning genus $g$ unicellular hypermap
$(\sigma,\theta)$ is also a spanning genus $g$ unicellular hypermap of $H_1$ by
Proposition~\ref{prop:deletion}. 

By Proposition~\ref{prop:contraction}, the spanning genus $g$
unicellular hypermap $(\sigma,\theta)$ belongs to $S_2$ if and only if
$1$ and $2$ belong to the same cycle of $\theta$, which is equivalent to
$\phi(\theta)=2$. 

For $k\geq 3$ we have
$$
(1,k)(1,2,\ldots,m)(1,k-1)=(1)(k,k+1,\ldots,m)(2,3,\ldots,k-1).
$$
Any spanning unicellular hypermap $((1,k)\sigma,\theta')$ of
$H_k$ must 
satisfy that $\theta'$ is a refinement of the above permutation, hence
the cycles of $\theta'$ containing the elements $2,3,\ldots,k-1$ can not
contain $1$ or $k$. The permutation $\theta=(1,k)\theta'$ has then $1$ and $k$
on the same cycle, but the elements $2,3,\ldots,k-1$ are still not on
this cycle, forcing $\phi(\theta)=k$. Conversely, if we have
$\phi(\theta)=k$ then $\theta$ is a refinement of 
$$
(1,2,\ldots,m)(1,k-1)=(1,k,k+1,\ldots,m)(2,3,\ldots,k-1)
$$
hence $(\sigma,\theta)$ is a spanning unicellular hypermap of
$(\sigma,\theta(1,k-1))$ by Proposition~\ref{prop:deletion} and $1$ and
$k$ are on the same cycle of $\theta$, hence
Proposition~\ref{prop:contraction} is applicable.  Note finally that for
$k\geq 2$ the stated relations between $g(\sigma,\theta)$ and
$g((1,k)\sigma,\theta')$ are direct consequences of
Proposition~\ref{prop:contraction}. 
\end{proof}  
\begin{corollary}
  If $H$ has at least one nontrivial hyperedge, we may use
  Theorem~\ref{thm:stdecomp} to write the number of its spanning
  genus $g$ unicellular hypermaps as the sum of the numbers of spanning
  genus $g$ and genus $g-1$ unicellular hypermaps of at most $m$ smaller
  hypermaps. Here $m$ is the size of the smallest nontrivial hyperedge in $H$.  
\end{corollary}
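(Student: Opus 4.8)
The plan is to obtain the corollary as a direct bookkeeping consequence of Theorem~\ref{thm:stdecomp}. First I would pick, among all nontrivial hyperedges of $H=(\sigma,\alpha)$, one whose size $m$ is minimal; by hypothesis such a hyperedge exists and $m\geq 2$. After relabeling the points I may assume that $(1,2,\ldots,m)$ is a cycle of $\alpha$ realizing this minimum, which makes Theorem~\ref{thm:stdecomp} applicable to it. The theorem then exhibits the set of all spanning genus $g$ unicellular hypermaps of $H$ as a disjoint union $S_1\sqcup S_2\sqcup\cdots\sqcup S_m$ of $m$ explicitly described pieces.

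Next I would pass from this disjoint union to a sum of cardinalities: the number of spanning genus $g$ unicellular hypermaps of $H$ equals $|S_1|+\cdots+|S_m|$, so it suffices to recognize each $|S_k|$ as a count of spanning unicellular hypermaps of one of the auxiliary hypermaps $H_1,\ldots,H_m$. For $k=1$ this is immediate from the definition of $S_1$: it is either empty or the set of all spanning genus $g$ unicellular hypermaps of $H_1$. For $k\geq 2$, the description of $S_k$ in the theorem says precisely that the assignment $\theta'\mapsto(1,k)\theta'$ carries the spanning genus $g$ (respectively genus $g-1$) unicellular hypermaps of $H_k$ onto $S_k$; since $(1,k)$ is an involution this assignment is injective, hence a bijection, so $|S_k|$ equals the corresponding count for $H_k$. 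The genus to be used, $g$ or $g-1$, is dictated by whether the hypercontraction $(1,k)$ is topological, which depends only on whether $1$ and $k$ lie on the same cycle of $\sigma$, and so is unambiguously determined by $H$ and $k$. Collecting these identities expresses the number of spanning genus $g$ unicellular hypermaps of $H$ as a sum of at most $m$ terms, each of which is the number of spanning genus $g$ or genus $g-1$ unicellular hypermaps of one of $H_1,\ldots,H_m$.

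I expect essentially no real obstacle, since all the substantive work is already in Theorem~\ref{thm:stdecomp}; only routine points need care. One is the bookkeeping of the genus shift: the pieces $S_k$ arising from non-topological hypercontractions are counted by genus $g-1$ data on $H_k$, which is exactly why genus $g-1$ must appear in the statement. Another is the degenerate cases in which a hyperdeletion or hypercontraction disconnects $H$; in each such case the theorem sets the corresponding $S_k$ to be empty, so it contributes $0$ to the sum. Finally, to justify calling the $H_k$ \emph{smaller}, I would observe -- as is visible from the cycle computations in the proof of Theorem~\ref{thm:stdecomp} -- that in each $H_k$ the cycle $(1,2,\ldots,m)$ of $\alpha$ is replaced by the fixed point $1$ together with cycles whose lengths total $m-1$, while the other cycles of $\alpha$ are untouched; hence the number of non-fixed points of the hyperedge permutation strictly decreases, so iterating the construction terminates, ending at monopoles with no nontrivial hyperedges.
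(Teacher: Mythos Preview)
Your proposal is correct and is exactly the routine bookkeeping the paper leaves implicit: the corollary is stated in the paper without proof, as an immediate consequence of Theorem~\ref{thm:stdecomp}, and you have simply spelled out the obvious passage from the disjoint union $S_1\sqcup\cdots\sqcup S_m$ to a sum of cardinalities, together with the observation (visible from the cycle computations in the proof of the theorem) that each $H_k$ has strictly fewer non-fixed points in its hyperedge permutation.
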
  

\begin{remark}
We may use Theorem~\ref{thm:stdecomp} to find the number of spanning
hypertrees of the hypermap $H$ presented in Example~\ref{e:robert}. We
may apply the theorem to the cycles $\alpha_1=(1,2,3)$, 
$\alpha_2=(4,5,6)$ and $\alpha_3=(7,8,9)$ independently: let us
denote by $H_{i_1,i_2,i_3}$ the hypermap obtained by considering the
case $i_j$ for the cycle $\alpha_j$. Note that $H_{i_1,i_2,i_3}$ is
isomorphic to $H_{i_2,i_3,i_1}$ since the operation $x\mapsto x+3$ (modulo
$9$) on the set of points is an automorphism of $H$. If the list
$(i_1,i_2,i_3)$ contains more than one copy of $2$ or $3$ then
$H_{i_1,i_2,i_3}$ is obtained by using at least one non-topological
contraction, and may be disregarded. $H_{1,1,1}$ is not a hypermap: it is
disconnected. The hypermaps $H_{1,1,2}$, $H_{1,2,1}$ and $H_{2,1,1}$ are
maps consisting of $3$ parallel edges between two vertices, whereas $H_{1,1,3}$,
$H_{1,3,1}$ and $H_{3,1,1}$ are maps consisting of $2$ parallel edges
between two vertices. Finally, all $6$ hypermaps of the form
$H_{i_1,i_2,i_3}$ with no repeated index are hypermonopoles. We obtain that
the number of spanning hypertrees of $H$ is $3\times 3+3\times 2+6\times 1=21$.
\end{remark}

\begin{remark}
The refinements of the cycle $(1,2,\ldots,m)$, ordered by the
refinement relation, form a poset that is isomorphic to the lattice
of noncrossing partitions on the set $\{1,2,\ldots,m\}$. Since $\alpha$
contains the cycle $(1,2,\ldots,m)$, every spanning genus $g$
unicellular hypermap $(\sigma,\theta)$ has the property that each cycle
of $\theta$ acts on a set that is either contained in or disjoint from the set
$\{1,2,\ldots,m\}$. The answer to the question whether $(\sigma,\theta)$
belongs to $S_k$ only depends on the cycles of $\theta$ acting on the
set $\{1,2,\ldots,m\}$. Remarkably $(\sigma,\theta)$ belongs to $S_k$ if and
only if the noncrossing partition corresponding to the cycles of $\theta$
acting on the set $\{1,2,\ldots,m\}$ belongs to the set $R_k$ defined by
Simion and Ullman~\cite[Theorem 2]{Simion-Ullman} as an aid to
recursively construct a symmetric chain decomposition of the noncrossing
partition lattice.  
\end{remark}

\begin{remark}
For spanning hypertrees the classes $S_1,S_2,\ldots,S_m$ may be visualized as
follows. Let us 
represent the cycle $(1,2,\ldots,m)$ of $\alpha$ in this cyclic order on
the boundary of a disc, and let us also represent the vertex tour of the
spanning hypertree with a dotted curve as shown in
Figure~\ref{fig:turns}. The class $S_1$ contains those spanning
hypertrees whose vertex tour cuts out the point $1$ from the
hyperedge, leaving the points $2,3,\ldots,m$ on the outside. For $k\geq 2$ the
class $S_k$ contains exactly those spanning hypertrees whose vertex
tour contains the points $1$ and $k$ inside the tour (that is, on the
side containing the spanning hypertree) and leaves the points
$2,3,\ldots,k-1$ of the hyperedge on the outside. Nothing can be assumed
regarding the points $k+1,k+2,\ldots, m$. In a way, the index $k$ of the
set $S_k$ determines ``which way we turn'' when the vertex tour arrives near
the point $1$.
\end{remark}
  
\begin{figure}[h]
%90%  
\begin{center}
\input{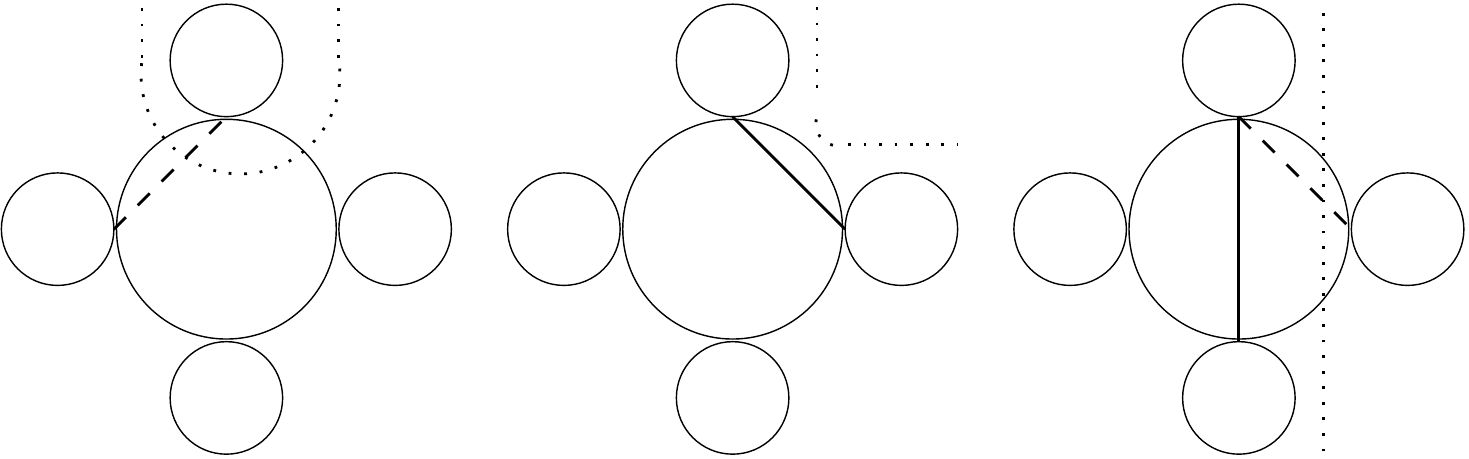_t}
\end{center}
\caption{A local image of the vertex tour for the classes $S_1$, $S_2$,
  and $S_3$ in the case when $m=4$.}
\label{fig:turns} 
\end{figure}

\section{Semimeanders, meanders and reciprocals of monopoles and bipoles}
\label{sec:semimeanders}

\begin{definition}
For each integer $m\geq 0$ we define the {\em monopole with $m$ nested
  edges} as the hypermap, whose vertex permutation is
$(2,4,\ldots,2m,2m-1,2m-3,\ldots,3,1)$ and whose edge permutation is the
involution $(1,2)(3,4)\ldots (2m-1,2m)$. For each $m\in \{\frac{1}{2},
\frac{3}{2}, \frac{5}{2},\ldots\}$ we
define the {\em monopole with $m$ nested 
  edges} as the hypermap, whose vertex permutation is
$(2,4,\ldots,2m-1,2m,2m-2,\ldots,3,1)$ and whose edge permutation is the
involution $(1,2)(3,4)\ldots (2m-2,2m-1)(2m)$.  
\end{definition}
The reciprocal of a monopole with $2.5$, respectively $3$, nested edges
may be seen in Figure~\ref{fig:semimeander}, respectively
\ref{fig:semimeander2} below. Note that we may think of the loop
$(3)$ in Figure~\ref{fig:semimeander} as a ``half of an edge''.    

\begin{theorem}
The number of semimeanders of order $n$ equals the number of spanning
hypertrees of the reciprocal of a monopole with $n/2$ nested edges.
\label{thm:semimeanders}
\end{theorem}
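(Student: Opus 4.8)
The plan is to identify the spanning hypertrees of the reciprocal $(\alpha,\sigma)$ of the monopole $(\sigma,\alpha)$ with $n/2$ nested edges with the arch diagrams of foldings of $n-1$ stamps, using Proposition~\ref{prop:vertextour} as the main tool.

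I would begin with two explicit computations on the monopole $(\sigma,\alpha)$ with $m=n/2$ nested edges. Expanding $\alpha^{-1}\sigma=\alpha\sigma$ from the given formula for $\sigma$, one finds that its cycles are the singletons together with the transpositions $\{2i,2i+1\}$, one for each $i$ with $2i+1\le n$; hence $z(\sigma)+z(\alpha)+z(\alpha^{-1}\sigma)=n+2$ in~\eqref{eq:genusdef}, so $g(\sigma,\alpha)=0$, and therefore $g(\alpha,\sigma)=g(\sigma,\alpha)=0$ since the genus is symmetric (an immediate consequence of~\eqref{eq:genusdef} and $z(\pi)=z(\pi^{-1})$; alternatively, the edges of the monopole are pairwise nested, so planarity already follows from~\cite[Theorem~1]{Cori}). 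Moreover $\sigma^{-1}\alpha=(\alpha^{-1}\sigma)^{-1}=\alpha^{-1}\sigma$, so the nontrivial cycles of $\sigma^{-1}\alpha$ are again the transpositions $\{2i,2i+1\}$ with $2i+1\le n$, while the nontrivial cycles of $\alpha$ are the transpositions $\{2i-1,2i\}$ with $2i\le n$.

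Next I would apply Proposition~\ref{prop:vertextour} to the hypermap $(\alpha,\sigma)$. It puts the spanning hypertrees $(\alpha,\theta)$ of $(\alpha,\sigma)$ in bijection (via $(\alpha,\theta)\mapsto\eta=\theta^{-1}\alpha$) with the circular permutations $\eta$ of $\{1,\dots,n\}$ for which $g(\alpha,\eta)=0$ and $g(\sigma^{-1}\alpha,\eta)=g(\sigma^{-1}\alpha,\alpha)$, the latter quantity being $g(\alpha,\sigma)=0$ by the analogue, for $(\alpha,\sigma)$, of the identity $g(\alpha^{-1}\sigma,\sigma)=g(\sigma,\alpha)$ established inside Proposition~\ref{prop:vertextour}. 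By the symmetry of the genus and~\cite[Theorem~1]{Cori}, the condition $g(\alpha,\eta)=0$ says that the cycles of $\alpha$ form a noncrossing partition relative to the cyclic order of $\eta$, and $g(\sigma^{-1}\alpha,\eta)=0$ says the same for the cycles of $\sigma^{-1}\alpha$. Combining this with the first step, $\eta$ lies in the set produced by Proposition~\ref{prop:vertextour} if and only if, in the cyclic order $\eta$, the arcs $\{2i-1,2i\}$ ($2i\le n$) are mutually noncrossing and the arcs $\{2i,2i+1\}$ ($2i+1\le n$) are mutually noncrossing; these are exactly the ``upside-down'' and ``upright'' arch families attached to an ordered list of labels in the definition of a folding of stamps. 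Cyclically rotating $\eta$ so that $1$ comes first and reading off the resulting list $(1,\pi(2),\dots,\pi(n))$ therefore sets up a bijection between the set delivered by Proposition~\ref{prop:vertextour} and the foldings of $n-1$ stamps, the point being that cutting the circle does not change whether two arcs cross, since the interleaving of two pairs of boundary points is a cyclic-order invariant. Together with the equality, recalled in Section~\ref{sec:pmeanders}, of the number of semimeanders of order $n$ and the number of foldings of $n-1$ stamps, this proves the theorem.

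I expect the most delicate step to be the parity bookkeeping: when $n$ is odd, $m$ is a half-integer and both $\alpha$ and $\alpha^{-1}\sigma$ pick up an extra fixed point, so the explicit cycle descriptions are formally different from the even case. I would run the whole argument with the uniform conventions ``all transpositions $\{2i-1,2i\}$ with $2i\le n$'' and ``all transpositions $\{2i,2i+1\}$ with $2i+1\le n$'', which mirror the ``if $2i$ (resp.\ $2i+1$) exists as a label'' clauses in the definition of a folding and so handle both parities simultaneously. A smaller point to verify carefully is that Proposition~\ref{prop:vertextour} is a bijection onto the \emph{entire} set $C_{\sigma^{-1}\alpha}(\sigma^{-1}\alpha,\alpha)$, so that the resulting correspondence with foldings neither misses a spanning hypertree nor introduces a spurious one.
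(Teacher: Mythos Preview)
Your argument is correct and is essentially the same as the paper's. The paper phrases the bijection via the one-line diagram of Proposition~\ref{prop:1line0} (built on the face tour of Corollary~\ref{cor:MCP}) rather than via the vertex tour of Proposition~\ref{prop:vertextour}, but since the reciprocal hypermap has genus~$0$ the two tours coincide, and the resulting characterization---that both the transpositions $\{2i-1,2i\}$ of $\alpha$ and the transpositions $\{2i,2i+1\}$ of $\sigma^{-1}\alpha$ be noncrossing with respect to the cyclic order of the tour---is identical to yours; the paper also treats the even and odd $n$ cases separately whereas you handle them uniformly, which is a purely cosmetic difference.
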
  
\begin{proof}
We prove the theorem by showing that there is a bijection between
the set of spanning hypertrees of the reciprocal of a monopole
with $n/2$ nested edges and the set of foldings of $n-1$ stamps defined
in Section~\ref{sec:pmeanders}. Introducing $m=\lfloor n/2\rfloor$, we
  have $n=2m+1$ if $n$ is odd and $n=2m$ if $n$ is even.

If $n$ is odd, then the reciprocal of the
monopole with $n/2$ nested edges is $(\sigma,\alpha)$ where
$$
\sigma=(1,2)(3,4)\cdots(2m-1,2m)(2m+1)\quad\mbox{and} 
$$
$$\alpha=(2,4,\ldots,2m,2m+1,2m-1,2m-3,\ldots,3,1).$$
Figure~\ref{fig:semimeander} below is an example of the case when
$n=5$. 
\begin{figure}[h]
%100%  
\begin{center}
\input{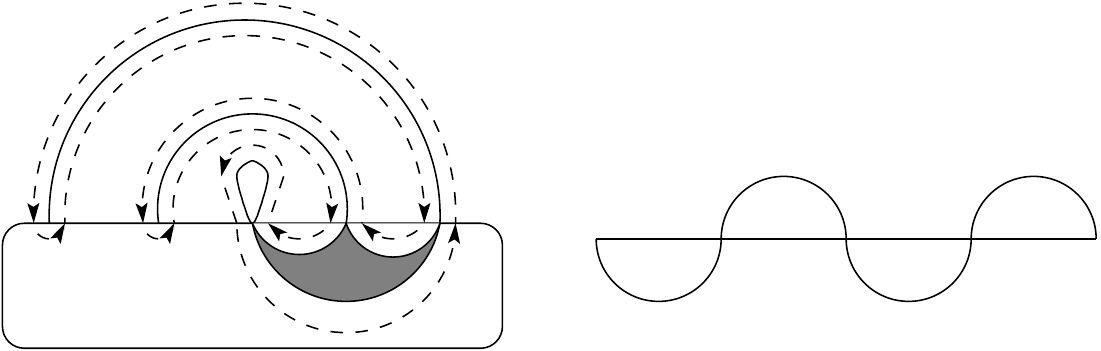_t}
\end{center}
\caption{Spanning hypertree of a reciprocal monopole with $2.5$ nested
  edges and the corresponding stamp folding} 
\label{fig:semimeander} 
\end{figure}
Note that the faces of our hypermap are given by
$\alpha^{-1}\sigma=(1)(2,3)(4,5)\cdots (2m,2m+1)$. 
To any spanning hypertree $\theta$ of $(\sigma,\alpha)$ we associate the
one line representation described in Proposition~\ref{prop:1line0}. 
On the right hand side of Figure~\ref{fig:semimeander} we see the one
line representation associated to the spanning hypertree
$\theta=(1,5,3)(2)(4)$. This corresponds to
$\theta^{-1}\sigma=(1,2,3,4,5)$, we list the points on the line in this
order. Above the line we see the faces $\alpha^{-1}\sigma$ of our
hypermap, below the line we see the vertices $\sigma$. Note that the
union of the set of non-singleton cycles of $\alpha^{-1}\sigma$ and of
$\sigma$ are all two-cycles of the form $(i,i+1)$ for $\leq i\leq
n-1$. This union is a disjoint union, the two-cycles with an even
smaller element belong to $\alpha^{-1}\sigma$, the others belong to
$\sigma$. Hence we obtain the diagram of a stamp folding.  

A similar bijection may be constructed when $n$ is even. In this case we
have 
$$
\sigma=(1,2)(3,4)\cdots(2m-1,2m)\quad\mbox{and} 
$$
$$\alpha=(2,4,\ldots,2m,2m-1,2m-3,\ldots,3,1).$$
Figure~\ref{fig:semimeander2} below is an example of the case when
$n=6$. 
\begin{figure}[h]
%100%  
\begin{center}
\input{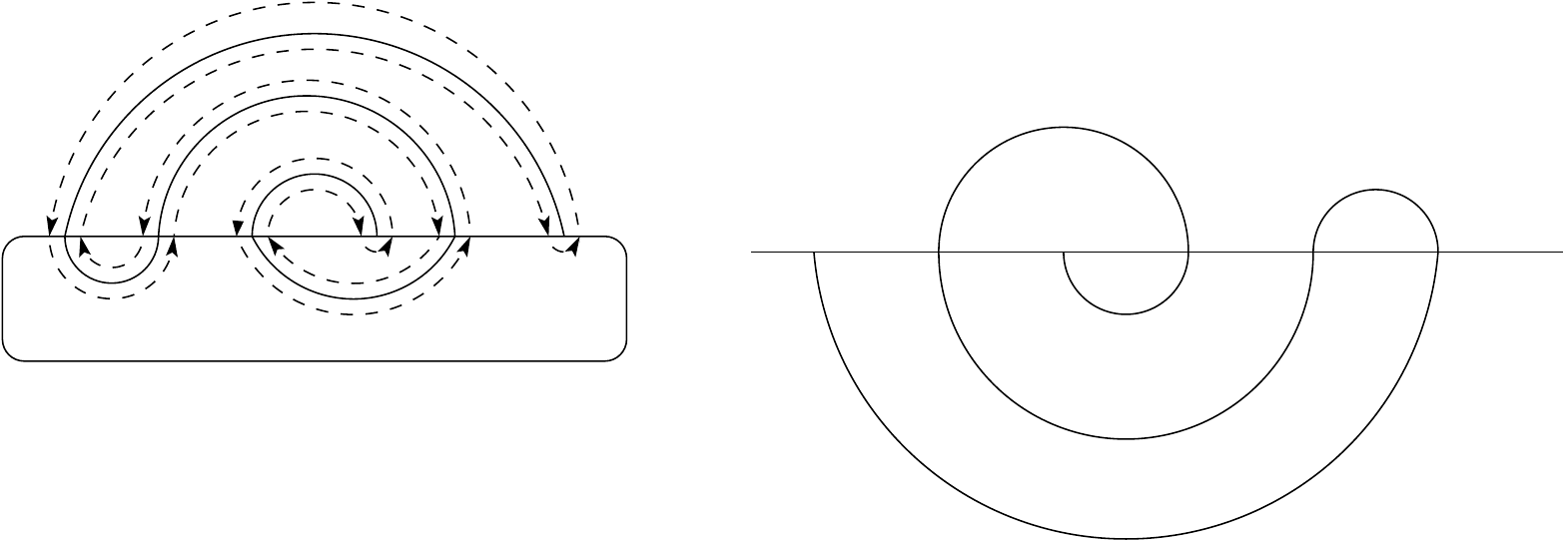_t}
\end{center}
\caption{Spanning hypertree of a reciprocal monopole with $3$ nested
  edges and the corresponding stamp folding}
\label{fig:semimeander2} 
\end{figure}

In this case we have
$\alpha^{-1}\sigma=(1)(2,3)(4,5)\cdots (2m-1,2m-1)(2m)$.
The example in Figure~\ref{fig:semimeander2} represents the spanning
hypertree $\theta=(1)(2,4)(3,6)(5)$ and we have
$\theta^{-1}\sigma=(1,4,6,5,3,2)$. Once again the one line
representation described in Proposition~\ref{prop:1line0} is a stamp
folding diagram. 
\end{proof}

\begin{definition}
We define the {\em dipole with $n$ parallel edges} as the map whose
vertex permutation is $(1,3,5,2n-1)(2n,2n-2,\ldots,4,2)$ and whose edges
are $(2n,1)(2,3)(4,5)\cdots (2n-2,2n-1)$.
\end{definition}  

\begin{theorem}
The number of meanders of order $n$ equals the number of spanning
hypertrees of the reciprocal of a dipole with $n$ parallel edges.
\label{thm:meanders}
\end{theorem}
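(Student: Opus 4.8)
The plan is to imitate the proof of Theorem~\ref{thm:semimeanders}: I would exhibit a bijection between the spanning hypertrees of the reciprocal $H$ of the dipole with $n$ parallel edges and the meanders of order $n$, by recognizing the one-line diagram of Proposition~\ref{prop:1line0} as a (relabeled) meander. First I would write down $H=(\rho,\mu)$ explicitly: its vertex permutation is the edge permutation of the dipole, $\rho=(2n,1)(2,3)(4,5)\cdots(2n-2,2n-1)$, and its edge permutation is the vertex permutation of the dipole, $\mu=(1,3,5,\ldots,2n-1)(2n,2n-2,\ldots,4,2)$. A short computation gives $\mu^{-1}\rho=(1,2)(3,4)\cdots(2n-1,2n)$, so the faces of $H$ are the blocks of the perfect matching $M_1=\{\{2k-1,2k\}:1\le k\le n\}$, while the vertices of $H$ are the blocks of the perfect matching $M_2=\{\{1,2n\}\}\cup\{\{2k,2k+1\}:1\le k\le n-1\}$. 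Since taking the reciprocal preserves the genus and the dipole is a plane graph, $H$ has genus zero, so Proposition~\ref{prop:1line0} applies. The structural fact I would record for later use is that $M_1\cup M_2$, viewed as a graph on $\{1,\ldots,2n\}$, is the single $2n$-cycle $1-2-3-\cdots-2n-1$.

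Next I would set up the forward map. Given a spanning hypertree $(\rho,\theta)$ of $H$, form its face tour $\zeta=\theta^{-1}\rho$ and the associated one-line diagram: $2n$ points on a horizontal line, labeled left to right in the cyclic order of $\zeta$ beginning with $1$, with the faces $M_1$ drawn as arches above the line and the vertices $M_2$ as arches below. By Proposition~\ref{prop:1line0} both arch systems are noncrossing. Reading this picture as a curve which crosses the line at its $2n$ marked points, travelling through the upper half-plane along the $M_1$-arches and through the lower half-plane along the $M_2$-arches, produces a self-avoiding curve (there are no crossings within either half-plane and the two half-planes meet only at the marked points), and because $M_1\cup M_2$ is a single cycle this curve is one closed loop meeting the line exactly $2n$ times: a meander of order $n$.

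For the inverse map I would start from a meander of order $n$, number its crossings $1,\ldots,2n$ from left to right, and record the upper noncrossing matching $A$ and the lower noncrossing matching $B$ on these crossings. Since the meander is a single loop, $A\cup B$ is a single cycle on the $2n$ crossings, each crossing incident to one $A$-edge and one $B$-edge. Tracing this cycle from the leftmost crossing and leaving it first along its $A$-edge, I obtain an enumeration $q_1=1,q_2,\ldots,q_{2n}$ of the crossings with $\{q_{2i-1},q_{2i}\}\in A$ and $\{q_{2i},q_{2i+1}\}\in B$; assigning the label $i$ to $q_i$ turns $A$ into $M_1$ and $B$ into $M_2$. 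The cyclic order of the labels along the line is then a circular permutation $\zeta$ with respect to which $M_1$ and $M_2$ are noncrossing, hence, by Proposition~\ref{prop:1line0}, the face tour of a spanning hypertree $(\rho,\rho\zeta^{-1})$ of $H$ (note $\zeta=\theta^{-1}\rho$ forces $\theta=\rho\zeta^{-1}$, so $\theta$ is recovered from $\zeta$). It remains to check that the two constructions are mutually inverse, which is routine: injectivity of the forward map is exactly the fact that $\theta$ is determined by $\zeta$, and both directions amount to the same relabeling of $\{1,\ldots,2n\}$ carried out from opposite ends.

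I expect the main obstacle to be bookkeeping rather than a genuine difficulty. The delicate points are: pinning down precisely the combinatorial model of a meander of order $n$ used in~\cite{DiFrancesco}, so that a meander carries no data beyond its two noncrossing matchings of crossings together with the orientation of the line (which is what fixes the leftmost crossing and hence the basepoint of the relabeling); and verifying that the trace used in the inverse map is forced — this is exactly where the hypothesis that the meander is a single loop, equivalently that $M_1\cup M_2$ is connected, is used, and it is what guarantees that the relabeling visits all $2n$ crossings and is unique. The degenerate case $n=1$ (the dipole is a single edge between two vertices, and the block $\{1,2n\}=\{1,2\}$ of $M_2$ coincides with a block of $M_1$) should be checked separately but is immediate.
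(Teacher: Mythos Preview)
Your proposal is correct and follows essentially the same approach as the paper: the paper likewise writes down the reciprocal hypermap explicitly, computes $\alpha^{-1}\sigma=(1,2)(3,4)\cdots(2n-1,2n)$, and appeals to the one-line diagram of Proposition~\ref{prop:1line0} to identify spanning hypertrees with meanders, declaring the argument ``analogous to the proof of Theorem~\ref{thm:semimeanders}'' and illustrating with an example. Your write-up is in fact more careful than the paper's, since you make explicit the key structural point (that $M_1\cup M_2$ is a single $2n$-cycle, which is exactly what forces the arch diagram to be a single closed curve) and you spell out the inverse relabeling, neither of which the paper does.
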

\begin{proof}
The proof is analogous to the proof of Theorem~\ref{thm:semimeanders}.
The reciprocal hypermap $\sigma\alpha$ is given by 
$$
\sigma=(2n,1)(2,3)(4,5)(2n-2,2n-1)\quad\mbox{and}\quad
\alpha=(1,3,\ldots,2n-1)(2n,2n-2,\ldots,4,2).  
$$
Note that we have $\alpha^{-1}\sigma=(1,2)(3,4)\ldots (2n-1,2n)$.

An example of the reciprocal of a dipole with $4$ parallel edges is
shown in Figure~\ref{fig:meander}. 
\begin{figure}[h]
%90%  
\begin{center}
\input{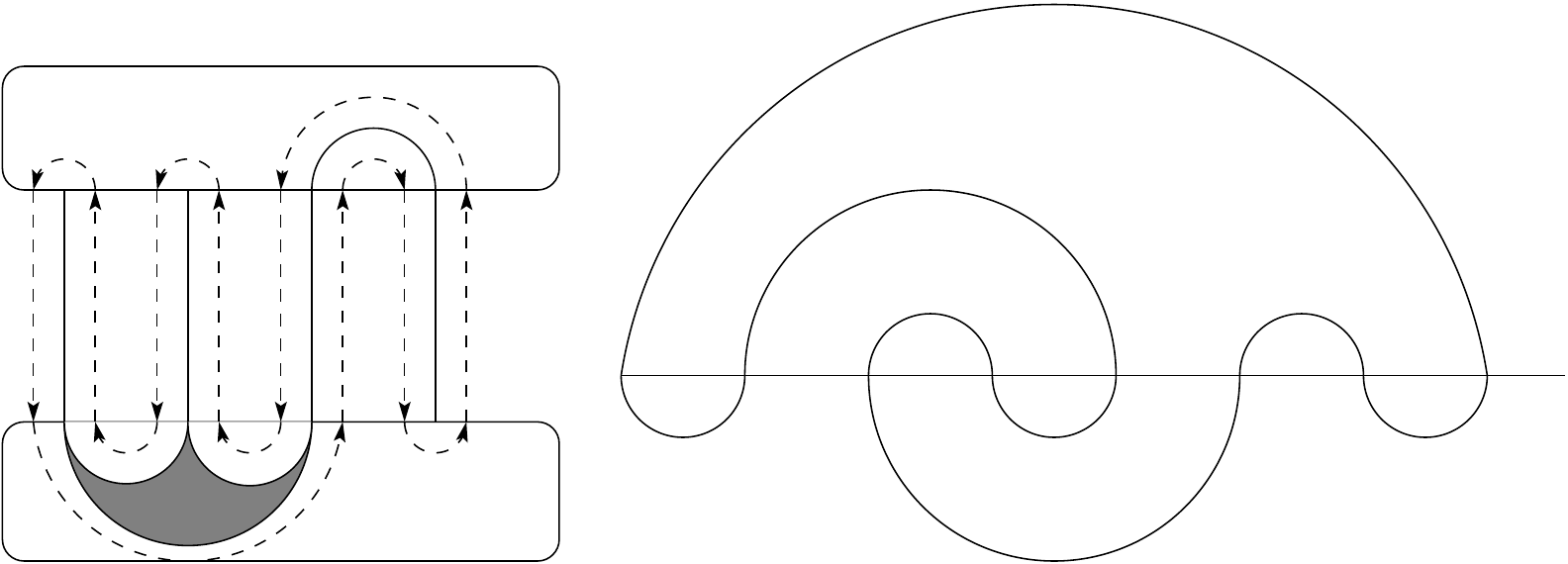_t}
\end{center}
\caption{Spanning hypertree of a reciprocal dipole with $4$ parallel
  edges and the corresponding meander}
\label{fig:meander} 
\end{figure}

The example in the figure represents the spanning hypertree
$\theta=(1,3,5)(2)(4,6)(7)(8)$, yielding
$\theta^{-1}\sigma=(1,8,5,6,7,4,3,2)$.  
\end{proof} 
\begin{remark}
Since any spanning hypertree $\theta$ of $(\sigma,\alpha)$ is a
refinement of $\alpha$, a permutation with two cycles of the same size,
we may think of each spanning hypertree as a pair of noncrossing
partitions on the same $n$-element set. This identification is easier to
visualize if we relabel $2i+1$ as $i$ for $1\leq i\leq n-1$, relabel $1$
as $n$, and relabel $2i$ as $i'$ for $1\leq i\leq n$. We then obtain
$$
\sigma=(1,1')(2,2')(3,3')(n,n')\quad\mbox{and}\quad
\alpha=(1,2,\ldots,n)(n',(n-1)',\ldots,1').  
$$
Each cycle of $\theta$ contains either labels from the set
$\{1,\ldots,n\}$ or from the set $\{1',\ldots,n'\}$. The cycles of
$\theta$ contained in $\{1,\ldots,n\}$  must form a noncrossing
partition, represented in the usual way, and the cycles of $\theta$
contained in $\{1',\ldots,n'\}$ must from a noncrossing partition,
represented with cycles in the reverse order compared to the usual
way. Thus we arrive precisely at the representation first developed by
Franz~\cite{Franz-po}. The structure of this representation was further
studied and utilized in~\cite{Franz-representation} and in
\cite{Franz-Earnshaw}.   

\end{remark}

\section{Spanning hypertrees of reciprocals of maps}
\label{sec:rtrees}

In this section we generalize the construction of Franz~\cite{Franz-po}
defining labeled plane trees representing meanders to define
labeled plane trees representing spanning hypertrees of the
reciprocal of a map. Our construction is illustrated with the map and a spanning
hypertree of its reciprocal shown in Figure~\ref{fig:rtree}.

\begin{figure}[h]
%90%  
\begin{center}
\input{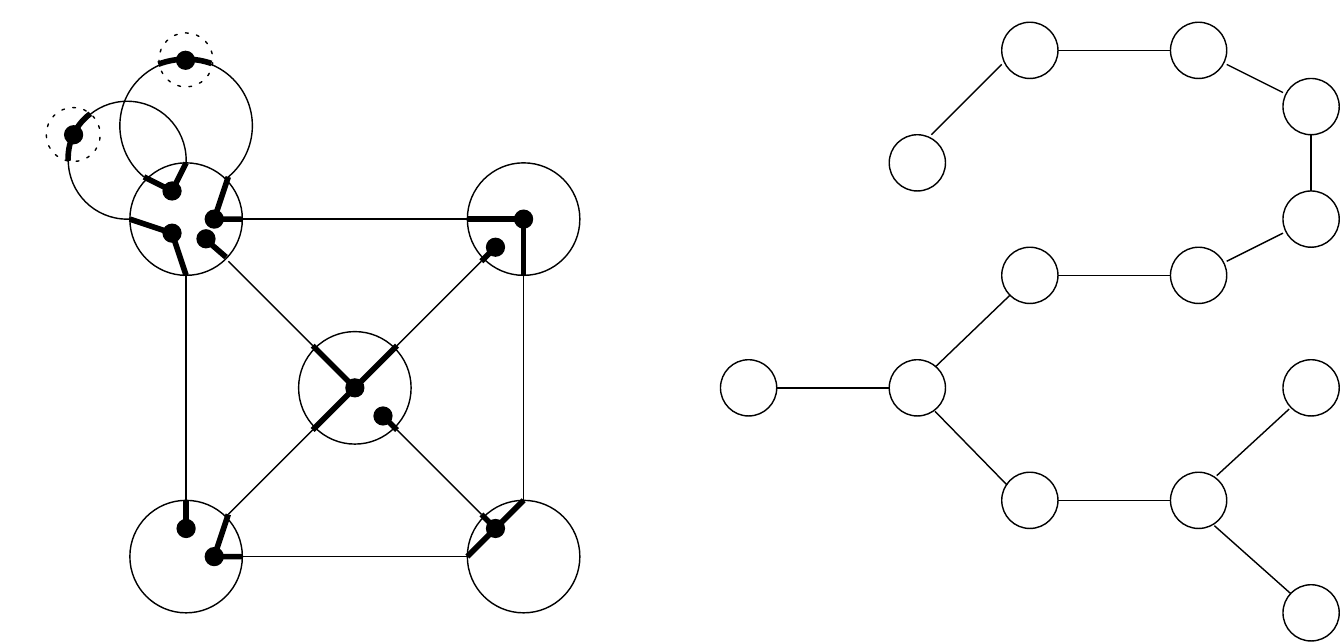_t}
\end{center}
\caption{A map and a spanning hypertree of its reciprocal}
\label{fig:rtree} 
\end{figure}

Given a map $(\sigma,\alpha)$, we number its vertices (these numbers are
circled in Figure~\ref{fig:rtree}) and we number its
edges. Furthermore, for each loop edge (that is, a $2$-cycle of $\alpha$,
whose points belong to the same cycle of $\sigma$) whose number is $i$
we also associate a duplicate label $i'$. Thanks to this duplication,
every point of the map may be uniquely described by the ordered pair
$(i,j)$ where $i$ is the label of the vertex and $j$ is the label of the
edge containing it. For example, after identifying each point with its
pair of labels, cycle number $3$ of $\sigma$  is $((3,7),(3,3),(3,8),
(3,9), (3,10), (3,9'), (3,10'))$, cycle number $5$ of $\alpha$ is
$((2,5),(5,5))$ and cycle number $9$ of $\alpha$ is $((3,9),(3,9'))$.   
\begin{definition}
We call a numbering of vertices and edges of a map, together with the
labeling of its points as defined above a {\em vertex-edge labeling} of
the map.    
\end{definition}

Next we fix a spanning hypertree $(\alpha,\theta)$ of the reciprocal
hypermap $(\alpha,\sigma)$. The permutation $\theta$ is a refinement of
the permutation $\sigma$: for each cycle of $\sigma$, the cycles of
$\theta$ contained in it form a genus zero permutation. For example the
cycles of $\theta$ contained in the first cycle of $\sigma$ form the
permutation $((1,1)), ((1,2),(1,4),(1,3))$. We associate a {\em center} to
each cycle of $\theta$ and to each loop edge of $\alpha$: these are
small black disks in Figure~\ref{fig:rtree}. Note that the dotted
circles around the centers of the loop edges of $\alpha$ are {\em not}
vertices of our map $(\sigma, \alpha)$, but they will become vertices in
the associated 
tree on the right hand side. We connect each center to
the points on its cycle (of $\theta$ or $\alpha$, shown in bold). By
merging the edges connecting the centers 
to the points on their cycles with the edges of our map, we obtain a
graph on the centers. For example, in
Figure~\ref{fig:rtree} we connect the center of the cycle
$((1,2),(1,4),(1,3))$ to the point $(1,2)$, we continue this edge using
the cycle $((1,2),(4,2))$ of $\alpha$ to the point $(4,2)$ and then we
continue to the center of the cycle $((4,2)(4,6))$. The resulting
topological graph is shown on the right hand side of
Figure~\ref{fig:rtree}. We label the center of each loop edge of
$\alpha$ with $0$, and we label all other centers with the number of the
cycle of $\sigma$ containing the cycle of $\theta$ containing the
center. Note that the second coordinates of all points on an edge
connecting two centers are the same: we label the edge connecting two
centers with this coordinate. The map we see in Figure~\ref{fig:rtree} is
a  {\em coherently $(\sigma,\alpha)$-labeled plane tree}. We explain its
definition in several steps.

\begin{definition}
\label{def:sa-labeled map}
Given a map $(\sigma,\alpha)$ with a vertex-edge labeling, a {\em
  $(\sigma,\alpha)$-labeled map $(\sigma',\alpha')$} is a map with
numbered vertices and edges, subject to the following rules:
\begin{enumerate}
\item Each vertex of $(\sigma',\alpha')$ is either labeled with zero or
  it is labeled   with the first coordinate of a point in the
  vertex-edge labeling of $(\sigma,\alpha)$. The same label may appear
  on several vertices of $(\sigma',\alpha')$.  
\item The set of edge labels of $(\sigma',\alpha')$ is the set of the second
  coordinates of the points in the vertex-edge labeling
  $(\sigma,\alpha)$. Each edge label appears exactly once.
\item The vertices labeled $0$ of $(\sigma',\alpha')$ have degree $2$
  and they correspond to the set of loop edges of $(\sigma,\alpha)$
  bijectively: if $j$ is the label of a loop edge in $(\sigma,\alpha)$
  then there is exactly one vertex labeled $0$ of $(\sigma',\alpha')$
  that us incident to a pair of edges labeled $j$ and $j'$ respectively.
\end{enumerate}
\end{definition}  

Alternatively, using Definition~\ref{def:split}, we may describe
$(\sigma,\alpha)$-labeled maps as follows.

\begin{proposition}
The map $(\sigma',\alpha')$ is a $(\sigma,\alpha)$-labeled map if and
only if its 
  underlying graph may be obtained from the underlying graph of
  $(\sigma,\alpha)$ by subdividing each loop edge into two edges and
  then applying several vertex splitting operations (as defined in
  Definition~\ref{def:split}) that never split the newly introduced
  subdividing vertices.  
\end{proposition}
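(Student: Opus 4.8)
The plan is to prove the two implications separately, understanding a $(\sigma,\alpha)$-labeled map to be a map carrying the vertex- and edge-numbering of Definition~\ref{def:sa-labeled map} in which, moreover, each edge is attached consistently with $(\sigma,\alpha)$: an edge numbered $j$ that is a non-loop edge of $(\sigma,\alpha)$ between the vertices numbered $i_1$ and $i_2$ has its two ends on a vertex labeled $i_1$ and a vertex labeled $i_2$, while for a loop $j$ at the vertex numbered $i$ the ends of $j$ and of $j'$ other than the associated degree-$2$ vertex labeled $0$ lie on vertices labeled $i$. Equivalently, the darts of $(\sigma',\alpha')$ that are not incident to a vertex labeled $0$ are the points $(i,j)$ of $(\sigma,\alpha)$, each carrying its own pair of coordinates; this is what I read Definition~\ref{def:sa-labeled map} to encode. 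Granting this, the argument is mostly bookkeeping.

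For the direction ``obtained by subdividing loops then splitting $\Rightarrow$ $(\sigma,\alpha)$-labeled map'' I would induct on the number of vertex splittings. Subdividing each loop edge numbered $j$, sitting at the vertex numbered $i$, of the underlying graph of $(\sigma,\alpha)$ introduces a new vertex of degree $2$, which we label $0$, joined to the vertex labeled $i$ by two edges, which we number $j$ and $j'$; the resulting graph satisfies (1)--(3) of Definition~\ref{def:sa-labeled map}, its label-$0$ vertices biject with the loop edges of $(\sigma,\alpha)$, and the attachment convention holds. A vertex splitting of a vertex that is not one of the subdivision vertices replaces a vertex labeled $i$ by two vertices labeled $i$, leaves the edge set and its numbering alone, and never touches a subdivision vertex -- such a vertex is not the one being split, and its two incident edges are merely reassigned between the two copies of their common other endpoint, which is still labeled $i$ -- so all of (1)--(3) and the attachment convention persist. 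Since $(\sigma',\alpha')$ is a map its underlying graph is connected, so it suffices to consider splitting sequences that preserve connectedness, and this settles the implication.

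For the converse, let $(\sigma',\alpha')$ be a $(\sigma,\alpha)$-labeled map with underlying graph $H$. By condition (3) the label-$0$ vertices of $H$ are exactly the degree-$2$ vertices $w_j$, one per loop edge $j$ of $(\sigma,\alpha)$, with $w_j$ incident to the edge numbered $j$ and the edge numbered $j'$ (and neither of these is a loop at $w_j$, which would leave no slot for the other); every other vertex of $H$ carries a vertex number of $(\sigma,\alpha)$, and the edges of $H$ biject with $\{j : j\text{ an edge of }(\sigma,\alpha)\}\cup\{j' : j\text{ a loop of }(\sigma,\alpha)\}$. I would then merge, one pair at a time, all vertices of $H$ that carry the same label $i$, leaving the $w_j$ untouched; each such merging is the inverse of a vertex splitting in the sense of Definition~\ref{def:split}. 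After all the mergings we obtain a graph with one vertex for each vertex number $i$, the vertices $w_j$, and the edge set of $H$; by the attachment convention this is precisely the graph obtained from the underlying graph of $(\sigma,\alpha)$ by subdividing its loop edges (the edge numbered $j$ lands between the vertex numbered $i$ and $w_j$ exactly when $j$ is a loop at $i$, and between the vertices numbered $i_1$ and $i_2$ exactly when $j$ is a non-loop edge between them). Reading the sequence of mergings backwards then exhibits $H$ as obtained from the underlying graph of $(\sigma,\alpha)$ by subdividing each loop edge and then performing vertex splittings none of which splits a subdivision vertex, as required.

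The step I expect to be the main obstacle is the identification, at the end of the converse, of the graph reached after merging with the subdivided underlying graph of $(\sigma,\alpha)$: this is exactly where the attachment convention does its work, ruling out spurious labeled maps whose edges would be wired up incompatibly with $(\sigma,\alpha)$. Making this convention precise -- through the identification of the non-$0$ darts of $(\sigma',\alpha')$ with the points of $(\sigma,\alpha)$ -- is the real content of the proof; once it is in place, both directions reduce to the straightforward inductions above.
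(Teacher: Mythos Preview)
Your proof is correct and considerably more careful than the paper's own treatment: the paper does not give a formal proof of this proposition at all, only the short ``Indeed,\ldots'' paragraph immediately following it, which amounts to the forward direction together with the observation that darts of $(\sigma',\alpha')$ not incident to a zero-labeled vertex are identified with points of $(\sigma,\alpha)$. Your two inductions (splitting forward, merging backward) are exactly the right shape, and they match the paper's intent.

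Your observation about the ``attachment convention'' is well taken. Conditions (1)--(3) of Definition~\ref{def:sa-labeled map}, read literally, only constrain the \emph{sets} of vertex and edge labels and the local structure at zero-labeled vertices; they do not force an edge numbered $j$ to join vertices with the correct labels. The paper is tacitly assuming this, since the whole construction is set up so that the darts of $(\sigma',\alpha')$ are the points $(i,j)$ of $(\sigma,\alpha)$ together with the inserted midpoints, which automatically encodes the attachment data. You are right that without making this explicit the converse direction would fail, and your identification of this as ``the real content of the proof'' is accurate. The one remark I would drop is the sentence about restricting to connectedness-preserving splitting sequences: the proposition concerns underlying graphs and imposes no connectedness requirement on intermediate stages, so this aside is unnecessary (and in any case, since splitting cannot reconnect a disconnected graph, a connected end result already forces all intermediate graphs to be connected).
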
  

Indeed, the vertices of $(\sigma',\alpha')$ that were added as vertices
subdividing the loop edges are labeled with zero, each other vertex
of $(\sigma',\alpha')$ is labeled with the number of the vertex of
$(\sigma,\alpha)$ that was split (possibly several times) to obtain it). The 
edges of $(\sigma',\alpha')$ are identifiable with the edges of
$(\sigma,\alpha)$ after subdividing each loop edge of $(\sigma,\alpha)$
into two edges. Each vertex of $(\sigma',\alpha')$ of color $i>0$ is a
cyclic permutation acting on a subset of points moved by cycle number
$i$ of $\sigma$ and the collection of all such cyclic permutations of
the same color $i>0$ is a permutation of all points moved by cycle
number $i$ of $\sigma$. 
\begin{definition}
\label{def:coherent}
  Given a map $(\sigma,\alpha)$ with a vertex-edge labeling and a
  $(\sigma,\alpha)$-labeled map  $(\sigma',\alpha')$, we call the
  $(\sigma,\alpha)$-labeling {\em coherent} if 
  for each vertex label $i>0$, the cycles forming the set of vertices of
  color $i$ in $\sigma'$ form a refinement of the unique cycle numbered
  $i$ in $\sigma$.    
\end{definition}

In the case when the $(\sigma,\alpha)$-labeled map $(\sigma',\alpha')$
is a tree (of genus $0$), we may draw it in the plane in such a way that
for each vertex of $\sigma'$ of positive color the incident edges listed
in the counterclockwise order mark the corresponding points in the order
of the cycle of $\sigma'$. We call the resulting plane tree
corresponding to a coherently $(\sigma,\alpha)$-labeled map a 
{\em coherently $(\sigma,\alpha)$-labeled plane tree}.

\begin{theorem}
\label{thm:rtrees}  
For a map $(\sigma,\alpha)$,  there is a bijection between its spanning
hypertrees and the coherently $(\sigma,\alpha)$-labeled plane trees. 
\end{theorem}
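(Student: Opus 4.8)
The plan is to establish the bijection by going back and forth through the reciprocal hypermap $(\alpha,\sigma)$ and its spanning hypertrees $(\alpha,\theta)$, translating the combinatorial structure of such a $\theta$ (a refinement of $\sigma$) into a coherently $(\sigma,\alpha)$-labeled plane tree, and conversely. First I would fix a vertex-edge labeling of $(\sigma,\alpha)$ so that every point is named by a pair $(i,j)$, its vertex number and its edge number, with loop edges $j$ duplicated as $j'$. Given a spanning hypertree $(\alpha,\theta)$ of $(\alpha,\sigma)$, I would build the plane tree exactly as described in the text preceding the theorem: place a center for each cycle of $\theta$ and for each loop edge of $\alpha$, connect each center to the points on its cycle, and then \emph{merge} these spokes along the edges of $(\sigma,\alpha)$ (using that the edges of $\alpha$ carry a well-defined second coordinate, and loop edges have been subdivided at their $0$-labeled center). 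Label the center of a loop edge $0$ and every other center with the number of the cycle of $\sigma$ containing its cycle of $\theta$; label the edge joining two centers with the common second coordinate of the points it absorbs. Call the resulting object $\Phi(\theta)$.

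The first substantive step is to check that $\Phi(\theta)$ really is a coherently $(\sigma,\alpha)$-labeled \emph{plane tree}. That it is a map with the correct vertex labels (either $0$ or a first coordinate), correct edge labels (each second coordinate once), and the degree-$2$, loop-bijection condition on the $0$-vertices follows directly from the construction and from the Proposition characterizing $(\sigma,\alpha)$-labeled maps via subdivision and vertex splitting: contracting every spoke of $\Phi(\theta)$ back into its center recovers the subdivided underlying graph of $(\sigma,\alpha)$, and the splittings never touch the subdividing $0$-vertices. Coherence is immediate from the definition of a refinement: by hypothesis, for each cycle of $\sigma$ the cycles of $\theta$ lying inside it form a genus-zero permutation, i.e.\ a noncrossing partition, which is precisely the refinement condition required of the color-$i$ vertices of $\sigma'$. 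The delicate point is \emph{connectedness and acyclicity} (genus $0$): here I would invoke the definition of spanning hypertree together with the genus formula~\eqref{eq:genusdef}. Since $(\alpha,\theta)$ is a hypertree, $z(\alpha)+z(\theta)+z(\theta^{-1}\alpha)=n+2$ with $z(\theta^{-1}\alpha)=1$; the centers number $z(\theta)$ plus the number of loop edges, the spokes after merging number (edges of $\sigma$ plus loop edges), and an Euler-characteristic count on $\Phi(\theta)$ must then return $1$ face and genus $0$. I would set up this count carefully, because matching vertices, edges and faces of the merged tree against the three cycle-counts is where the argument has real content.

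For the inverse map, given a coherently $(\sigma,\alpha)$-labeled plane tree $(\sigma',\alpha')$, I would read off from the color-$i$ vertices (for $i>0$) a permutation of the points of cycle $i$ of $\sigma$; by coherence these assemble into a single permutation $\theta$ that is a refinement of $\sigma$, and the planarity/tree structure guarantees that $(\alpha,\theta)$ is again genus zero, hence a spanning hypertree of $(\alpha,\sigma)$. Checking that this is a genuine two-sided inverse of $\Phi$ is then a matter of unwinding the definitions: the plane-tree embedding convention (counterclockwise incident edges mark the cyclic order of the center's cycle) is exactly what encodes the cyclic order of each cycle of $\theta$, and the $0$-vertices encode nothing beyond which edges of $\alpha$ are loops. \textbf{The main obstacle} I expect is the genus-zero verification in the forward direction: one must show that merging the spokes into $(\sigma,\alpha)$ produces no cycle and leaves the structure connected exactly when $(\alpha,\theta)$ has genus $0$ and its group is transitive, and this is the step where the bookkeeping between the Euler characteristic of the constructed tree and the cycle counts $z(\sigma)$, $z(\alpha)$, $z(\theta)$, $z(\theta^{-1}\alpha)$ has to be done with care rather than waved at.
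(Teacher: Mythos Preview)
Your plan is correct and follows the same overall architecture as the paper: set up the correspondence between refinements $\theta$ of $\sigma$ (equivalently, spanning hypertrees $(\alpha,\theta)$ of the reciprocal) and coherently $(\sigma,\alpha)$-labeled maps $(\sigma',\alpha')$, then check that ``$(\alpha,\theta)$ is a hypertree'' is equivalent to ``$(\sigma',\alpha')$ is a plane tree''. The construction, the coherence check, and the two-sided inverse are handled the same way in both.

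The one genuine difference is in the verification of the key equivalence. You propose an Euler-characteristic count: with $V'=z(\theta)+\ell$ and $E'=z(\alpha)+\ell$ (where $\ell$ is the number of loop edges), the hypertree relation $z(\alpha)+z(\theta)+1=n+2$ gives $V'-E'=1$, and then $V'-E'+F'=2-2g'$ together with $F'\geq 1$, $g'\geq 0$ forces $F'=1$ and $g'=0$. This works. The paper instead observes directly that the face permutation $\sigma'^{-1}\alpha'$ is obtained from $\theta^{-1}\alpha$ by inserting each loop-midpoint $p_{i,j}$ into the cycle between $i$ and $\sigma''^{-1}(j)$; hence $z(\sigma'^{-1}\alpha')=z(\theta^{-1}\alpha)$, and the unicellularity transfers in one stroke (the genus equality then follows by the same vertex/edge count you set up). The paper's route is a bit more explicit in that it identifies the single face of the tree with the single face of the hypertree, which is what feeds into Corollary~\ref{cor:rtrees}; your route gets there by a nonnegativity squeeze. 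Both are short and neither is materially harder than the other.

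One small wording slip: you write ``edges of $\sigma$'' where you mean edges of the map $(\sigma,\alpha)$, i.e.\ the $2$-cycles of $\alpha$.
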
  
\begin{proof}
Let the map $(\sigma',\alpha')$ be a coherently
$(\sigma,\alpha)$-labeled plane tree. Note that the edge set of
$\alpha'$ is obtained by subdividing each loop edge in $\alpha$ into two
edges, and the requirement of coherence is equivalent to requiring that
the restriction $\sigma''$ of $\sigma'$ to the set of the original
points  must be a refinement of $\sigma$. Conversely, for any
refinement $\sigma''$ of $\sigma$, we may introduce a center to each
cycle of $\sigma''$, a center to each loop edge of $(\sigma,\alpha)$ and
create a labeled topological graph using the procedure described at the
beginning of this section. We only need to show that $(\alpha,\sigma'')$
is a hypertree if and only if the map $(\sigma',\alpha')$ is a tree. The
hypermap $(\alpha,\sigma'')$ is a hypertree if and only if
$z(\sigma''^{-1}\alpha)=1$. The map
$(\sigma',\alpha')$ is a tree if and only if
$z(\alpha'^{-1}\sigma')=z(\sigma'^{-1}\alpha')=1$. The statement is now
a consequence of the fact that $\sigma'^{-1}\alpha'$
may be computed from $\sigma''^{-1}\alpha$ 
by replacing $(\ldots i \sigma''^{-1}(j)\ldots ) $  with $(\ldots i
p_{i,j} \sigma''^{-1}(j)\ldots )$ for each loop edge $(i,j)$ in $\alpha$. Here
$p_{i,j}$ is the midpoint inserted in $(i,j)$ to obtain
$\alpha'$.  
\end{proof}  

Theorem~\ref{thm:rtrees} and its justification have the following
consequence.

\begin{corollary}
\label{cor:rtrees}  
The spanning hypertrees of the reciprocal $(\alpha,\sigma)$ of a map
$(\sigma,\alpha)$ are the reciprocals of all trees obtained from
$(\sigma,\alpha)$ by a sequence of topological vertex splittings. 
\end{corollary}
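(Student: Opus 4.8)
The plan is to read Corollary~\ref{cor:rtrees} off Theorem~\ref{thm:rtrees} once the bijection furnished there is recognized as the reciprocal operation. By Theorem~\ref{thm:rtrees} the spanning hypertrees of $(\alpha,\sigma)$ correspond bijectively to the coherently $(\sigma,\alpha)$-labeled plane trees, and by the Proposition characterizing $(\sigma,\alpha)$-labeled maps the underlying graph of such a labeled map --- hence of such a plane tree --- is exactly what is obtained from the underlying graph of $(\sigma,\alpha)$ by subdividing every loop and then performing vertex splittings that leave the new subdividing vertices alone. So the first thing I would do is make the correspondence of Theorem~\ref{thm:rtrees} completely explicit and identify it with ``take the reciprocal, then subdivide its loops.'' Running the construction in the proof of Theorem~\ref{thm:rtrees} the other way: a spanning hypertree $(\alpha,\theta)$ of $(\alpha,\sigma)$ is a genus-zero unicellular refinement $\theta$ of $\sigma$; its reciprocal $(\theta,\alpha)$ is a genus-zero unicellular map whose loops are precisely the $2$-cycles of $\alpha$ whose points lie on a common cycle of $\theta$; subdividing those loops yields a loop-free genus-zero unicellular map, that is, a tree, and this tree is exactly the coherently $(\sigma,\alpha)$-labeled plane tree that Theorem~\ref{thm:rtrees} attaches to $(\alpha,\theta)$ --- the cycles of $\theta$ becoming its positive-color vertices, the subdividing midpoints its color-$0$ vertices, and $\alpha$ recovered from the edge permutation by contracting those midpoints.

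The second step is to recognize ``subdivide the loops of $(\sigma,\alpha)$, then vertex-split'' as ``apply a sequence of topological vertex splittings to $(\sigma,\alpha)$.'' A vertex splitting of the underlying graph that keeps the edge set intact and refines only the rotation of a vertex is, on the level of permutations, exactly the move $(\sigma,\alpha)\mapsto((i,j)\sigma,(i,j)\alpha)$ of Definition~\ref{def:split} performed along the two darts at which the vertex is cut; when the cut is made along the two darts of a loop this move replaces that loop by a pair of half-edges, which is the loop subdivision of the Proposition with the midpoint left implicit (and turning those half-edges back into a genuine subdividing vertex, as in Remark~\ref{rem:finite}, changes neither the surface nor the tree status). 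Iterating, the trees obtainable from $(\sigma,\alpha)$ by topological vertex splittings are precisely the coherently $(\sigma,\alpha)$-labeled plane trees, and composing with the identification of the first step, the bijection of Theorem~\ref{thm:rtrees} sends each spanning hypertree of $(\alpha,\sigma)$ to its reciprocal, which is one of these trees, and conversely; reading this as an equality of sets gives the Corollary.

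The step I expect to be the genuine obstacle is this last reconciliation of bookkeeping around loops. Definition~\ref{def:split}'s topological vertex splitting left-multiplies both permutations and turns a loop into two fixed points of the edge permutation, whereas the Proposition is phrased with honest degree-two midpoint vertices, and one must show that the two pictures describe the same plane trees and sit compatibly under the reciprocal, and that neither the spanning-hypertree property of $(\alpha,\theta)$ nor the tree property of the associated object is disturbed by inserting or contracting those midpoints. I would handle this by fixing once and for all the dictionary ``fixed point of the edge permutation hanging off a split-off vertex $\leftrightarrow$ one half of a subdivided loop,'' using Serret's lemma together with the cycle-count and genus computations already carried out in the proof of Theorem~\ref{thm:rtrees} and Corollary~\ref{cor:refine1} to check that connectivity and the unicellular genus-zero condition are preserved, and invoking Remark~\ref{rem:finite} for the fact that adding or deleting such fixed points is invisible both to the surface and to the combinatorics. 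Once this normalization is pinned down, the Corollary is a direct translation of Theorem~\ref{thm:rtrees} and the Proposition.
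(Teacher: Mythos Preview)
Your approach is the paper's approach: the paper offers no proof for this Corollary at all, stating only that ``Theorem~\ref{thm:rtrees} and its justification have the following consequence,'' so deriving it from Theorem~\ref{thm:rtrees} together with the Proposition characterizing $(\sigma,\alpha)$-labeled maps via loop subdivision and vertex splitting is exactly what is intended.

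One point where your write-up is slightly off: splitting along the two darts of a loop via $(\sigma,\alpha)\mapsto((i,j)\sigma,(i,j)\alpha)$ with $(i,j)$ a $2$-cycle of $\alpha$ turns that loop into two \emph{fixed points} of the edge permutation (half-edges), not into a subdivided edge with a degree-two midpoint; these are genuinely different objects, and the Proposition's loop subdivision introduces an honest new point whereas Definition~\ref{def:split}'s topological vertex splitting does not. You are right that this is the only real bookkeeping to reconcile, and your instinct to set up a dictionary and invoke the fixed-point-insensitivity argument of Remark~\ref{rem:finite} is the correct way to close the gap; just be careful not to conflate the two pictures before that dictionary is in place. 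Once that is sorted, note also that a spanning hypertree $(\alpha,\theta)$ has genus zero and a single face, hence its reciprocal $(\theta,\alpha)$ cannot carry a loop at all, so in the end the loop bookkeeping only affects the description of intermediate objects, not of the final trees themselves.
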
  

Corollary~\ref{cor:rtrees} inspires considering the generation of the
spanning hypertrees of the reciprocal of a map by a sequence of
topological vertex splittings. Using this approach, the key move
presented in the work of Franz and Earnshaw~\cite{Franz-Earnshaw} may be
generalized as follows. Let $(\sigma,\alpha)$ be a map and consider a
spanning hypertree of its reciprocal, represented as a coherently
$(\sigma,\alpha)$-labeled plane tree $(\sigma',\alpha')$. Take two
vertices of the same color, representing adjacent blocks of $\sigma'$
that can be merged and still have a noncrossing partition. For example,
we may merge the two vertices of color $1$ of the plane tree shown in
Figure~\ref{fig:rtree} in such a way that the edges around the only
vertex of color $1$ are listed $(1,4,3,2)$ in the counterclockwise
order. The resulting plane graph has a unique cycle. In our example,
this cycle is a triangle with vertices of color $1$, $4$, $5$, and edges
labeled $1$, $2$, and $6$. We may obtain
another plane tree representing a spanning hypertree by performing a
topological vertex splitting that breaks this cycle without
disconnecting the graph. For example we may replace the unique cycle
$((5,1), (5,6),(5,5))$ of color $5$ with the pair of cycles
$((5,1), (5,6))((5,5))$ or with the pair of cycles $((5,1))
((5,6),(5,5))$. (We cannot use $((5,1),(5,5)), ((5,6))$ as the resulting
plane graph would still contain a cycle and the edge $((5,1),(1,1))$
would be disconnected from the rest of the graph. This {\em reciprocal
  tree flipping} is analogous to replacing a spanning tree $T$ with the
spanning tree $T-\{f\}\cup \{e\}$ where $e$ is an edge external to $T$
and $f$ belongs to the unique cycle contained in $T\cup \{e\}$. Franz
and Earnshaw~\cite{Franz-Earnshaw} apply this idea to maps with $2$
vertices and $n$ parallel (non-intersecting) edges, and they call the
operation a {\em reduction} if it merges the first block of the first
vertex with the block containing the first point not in the first block
and then splits the first available block in some order that breaks the
cycle. It is not hard to see that the idea of this reduction map may be
generalized to the reciprocal of an arbitrary map. Using some ordering
on the points, the reduction map becomes well-defined, its inverse is
not unique but has been useful in the constructive enumeration of meanders.  
  
A remarkable consequence of Theorem~\ref{thm:rtrees} is that for
loopless maps with vertices of degree at most three the number of spanning
hypertrees of the reciprocal only depends on the underlying graph and not
on the cyclic order of the edges around the vertices.

\begin{corollary}
\label{cor:crtree}
Let $(\sigma,\alpha)$ be a map such that each cycle of $\sigma$ has
length at most $3$ and each cycle of $\alpha$ has length $2$, containing
points of two different cycles of $\sigma$.  Let $G=(V,E)$ be a graph
whose vertices are the cycles of $\sigma$ and whose edges are
bijectively labeled with the cycles of $\alpha$ in such a way that the
edge labeled $(i,j)$ connects the vertices containing the points $i$ and
$j$ respectively. Then the spanning hypertrees of the reciprocal
hypermap $(\alpha,\sigma)$ are in bijection with all labeled trees $T$
having the following properties:
\begin{enumerate}
\item The edges of $T$ are bijectively labeled with the edge set $E$. 
\item Each vertex of $T$ is labeled with a vertex of $G$.
\item The vertex labeling is a proper coloring: no two vertices of $T$
  of the same vertex label are adjacent.
\item The set of edge labels of all edges incident to some vertex
  labeled with $v\in V$ is the set of all edges incident to $v$ in $G$.   
\end{enumerate}
\end{corollary}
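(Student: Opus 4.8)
The plan is to derive Corollary~\ref{cor:crtree} from Theorem~\ref{thm:rtrees} (and the description in Corollary~\ref{cor:rtrees}) by checking that, under the stated hypotheses, a coherently $(\sigma,\alpha)$-labeled plane tree carries no information beyond the isomorphism type of its underlying labeled graph, and that this labeled graph is exactly a tree $T$ of the kind described. First I would note what the hypotheses give for free. Since every cycle of $\alpha$ has length $2$ and joins two distinct cycles of $\sigma$, the map $(\sigma,\alpha)$ is loopless; hence in Definition~\ref{def:sa-labeled map} no vertex is labeled $0$, every vertex of a $(\sigma,\alpha)$-labeled map carries a positive label $i$ --- that is, a vertex of $G$ --- and by item~(2) of Definition~\ref{def:sa-labeled map} its edges are labeled bijectively by the cycles of $\alpha$, i.e. by the edge set $E$ of $G$. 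So properties~(1) and~(2) of the corollary hold for every $(\sigma,\alpha)$-labeled map, and they encode precisely its vertex- and edge-label data.

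Next I would prove that the planar (equivalently the rotation) structure is forced. A vertex $w$ of color $i$ of a coherently $(\sigma,\alpha)$-labeled map is a cycle of $\sigma'$ that, by Definition~\ref{def:coherent}, occurs as one block of a noncrossing refinement of the cycle numbered $i$ of $\sigma$; as that cycle has length at most $3$, the block $S_w$ has at most $3$ elements. A cyclically ordered set with at most two elements admits a unique cyclic order, and if $|S_w|=3$ then $S_w$ is the whole cycle numbered $i$ and the only length-$3$ refinement of a $3$-cycle $\gamma_i$ is $\gamma_i$ itself --- the reversed $3$-cycle violates the numerical criterion of Proposition~\ref{prop:refinement}, equivalently it has genus $1$ by~\eqref{eq:genusdef}. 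Therefore the cyclic order around every vertex, hence $\sigma'$, is determined by the underlying labeled graph, and since a plane tree is determined by its rotation system, so is the embedding: a coherently $(\sigma,\alpha)$-labeled plane tree is faithfully recorded by the labeled tree obtained by forgetting the cyclic/planar structure. Coherence further demands that the blocks $S_w$ at the color-$i$ vertices form a \emph{noncrossing} partition of the at most $3$ points of the cycle numbered $i$; but no partition of three or fewer points on a circle can cross, so this is no restriction.

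It then remains to recognize which labeled trees occur, and the answer is: exactly properties~(3) and~(4). An edge of a $(\sigma,\alpha)$-labeled map labeled by the $\alpha$-cycle $\{(i,j),(i',j)\}$ joins a color-$i$ vertex to a color-$i'$ one, and $i\ne i'$ since $\alpha$ is loopless, so adjacent vertices get distinct colors, giving~(3). For~(4): the color-$i$ vertices are the blocks of a partition of the points of the cycle numbered $i$ of $\sigma$, i.e. of $\{(i,j):j\text{ an edge of }G\text{ at }i\}$, and each point $(i,j)$ lies on the edge labeled $j$; so the edge labels appearing at color-$i$ vertices are precisely the edges of $G$ incident to $i$. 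Conversely, given a tree $T$ satisfying~(1)--(4), I would build $\sigma'$ by equipping each color-$i$ vertex with the cyclic order on its incident edge labels inherited from the cycle numbered $i$ of $\sigma$; this is well defined because, by~(4), that label set is contained in the set of edges of $G$ at $i$ (on which the cycle numbered $i$ induces a cyclic order) and has at most $3$ elements. Properties~(1),~(3),~(4) then say $(\sigma',\alpha')$ is a $(\sigma,\alpha)$-labeled map whose color-$i$ blocks partition the edges of $G$ at $i$, and coherence holds automatically by the ``no crossings on at most three points'' remark together with the construction of the within-block orders. These two assignments are mutually inverse, and composing with the bijection between coherently $(\sigma,\alpha)$-labeled plane trees and spanning hypertrees of $(\alpha,\sigma)$ from Theorem~\ref{thm:rtrees} (see also Corollary~\ref{cor:rtrees}) finishes the proof.

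The step I expect to require the most care is the second paragraph: one must be certain that the length-$\le 3$ hypothesis genuinely kills both the rotation data and the noncrossing condition --- this is precisely where loop edges, or vertices of $\sigma$ of length $\ge 4$, would bring the embedding back into play, in agreement with the remark preceding the corollary --- and this hinges on the small genus computation showing that a reversed $3$-cycle is not a refinement of a $3$-cycle.
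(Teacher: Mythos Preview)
Your proposal is correct and follows essentially the same route as the paper's own argument: apply Theorem~\ref{thm:rtrees}, observe that looplessness eliminates the $0$-labeled vertices, and then argue that for cycles of length at most~$3$ the coherence condition pins down the cyclic order at every vertex, so the rotation/plane data may be dropped. Your treatment is in fact more explicit than the paper's sketch---you spell out why the reversed $3$-cycle fails to be a refinement, why noncrossing is automatic on at most three points, and you verify properties~(3) and~(4) and the inverse construction in detail---but the underlying idea is identical.
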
  
Indeed, by Theorem~\ref{thm:rtrees}, the spanning hypertrees of
$(\alpha,\sigma)$ are in bijection with $(\sigma,\alpha)$-labeled plane
trees in which each vertex has degree at most $3$. The criteria stated
in Corollary~\ref{cor:crtree} match the definition of a
$(\sigma,\alpha)$-labeled plane tree, except there is no topological
restriction on the order of the edges around the vertices. These
restrictions have no meaning for vertices of degree one or two, neither
on the $(\alpha,\sigma)$ side nor on the plane tree side. For each
vertex $v\in V$ of degree $3$, there are two possible cyclic orientations of the
edges around the vertex. This orientation becomes irrelevant if there
is more than one vertex labeled $v$ in $T$. Otherwise exactly one cyclic 
orientation of the edges incident to the unique $v$-colored vertex in
$T$ satisfies the definition of the $(\sigma,\alpha)$-labeling. On
the other hand, for any tree in which the maximum degree is three, each
planar embedding of the tree may be uniquely described by choosing a
cyclic orientation around each vertex of degree three, and these choices
may be made independently.

\section*{Acknowledgments}
The authors are indebted to two anonymous referees for the very careful
reading of this manuscript and many valuable suggestions.
The second author wishes to express his heartfelt thanks to Labri, Universit\'e
Bordeaux I, for hosting him as a visiting researcher in Summer 2019,
when this research was started. This work was partially supported by a
grant from the Simons Foundation (\#514648 to G\'abor Hetyei).

\end{document}